\newtheorem{theorem}{Theorem}
\newtheorem{lemma}[theorem]{Lemma}
\newtheorem{corollary}[theorem]{Corollary}
\newtheorem{proposition}[theorem]{Proposition}
\newtheorem*{lemma*}{Lemma}
\newcommand{\cage}[1][k]{\ensuremath{(#1,g)}-cage}
\newcommand{\bicageT}[1][r,m]{\ensuremath{(\{#1\};3)}-cage}
\newcommand{\bicagesT}[1][r,m]{\ensuremath{(\{#1\};3)}-cages}
\newcommand{\npbicageT}[1][r,m]{\ensuremath{n_p(\{#1\};3)}}
\newcommand{\bigraphT}[1][r,m]{\ensuremath{(\{#1\};3)}-graph}
\newcommand{\bigraphsT}[1][r,m]{\ensuremath{(\{#1\};3)}-graphs}
\newcommand{\bicages}[2][r,m]{\ensuremath{(\{#1\};#2)}-cages}
\newcommand{\bigraphs}[2][r,m]{\ensuremath{(\{#1\};#2)}-graphs}
\title{Regular and biregular planar cages}
\author[G.Araujo-Pardo]{Gabriela Araujo-Pardo $^1$}
\address{$^1$ Instituto de Matemáticas, UNAM}
\email{ garaujo@math.unam.mx}
\author[F. Barrera-Cruz]{Fidel Barrera-Cruz $^2$}
\address{$^2$ Sunnyvale, CA}
\email{ fidel.barrera@gmail.com}
\author[N. Garc\'{i}a-Col\'{i}n]{Natalia Garc\'{i}a-Col\'{i}n $^3$}
\address{$^3$ CONACYT Research Fellow - INFOTEC Centro de
  Investigación en Tecnologías de la Información y Comunicación,
  Mexico. Corresponding Author.} 
  \email{  natalia.garcia@infotec.mx}
\begin{document}

\begin{abstract}
  We study the \textit{Cage Problem} for regular and biregular planar
  graphs. A $(k,g)$-\textit{graph} is a $k$-regular graph with girth $g$. A $(k,g)$-\textit{cage} is a $(k,g)$-graph of minimum order. It is not difficult to conclude that the \textit{regular
   planar cages} are the \textit{Platonic Solids}. A $(\{r,m\};g)$-\textit{graph} is a graph of girth $g$ whose vertices have
degrees $r$ and $m.$  A $(\{r,m\};g)$-\textit{cage}  is a $(\{r,m\};g)$-graph of minimum order. In this case we determine the triplets of values $(\{r,m\};g)$ for which there exist planar $(\{r,m\};g)$--graphs, for all those values we construct examples. Furthermore, for many triplets $(\{r,m\};g)$ we build the $(\{r,m\};g)$-cages.\\
 \textbf{Keywords:} Cages, Planar Graphs.\\
 \textbf{MSC2010:} 05C35, 05C10.
\end{abstract}

\maketitle

\section{Introduction}
We  only consider finite {simple} \todo{Shall add ``simple'', {\color{blue} that is not necessary says "undirected" when we are working with graphs}. Would this simplify proof of Lemma~\ref{lem:chorizo2}? N:no} graphs. The
girth of a graph is the length of a smallest cycle. A
$(k,g)$-\textit{graph} is a $k$-regular graph with girth $g$. A
$(k,g)$-\textit{cage} is a $(k,g)$-graph of minimum order, $n(k,g).$
These graphs were introduced by Tutte in 1947 (see~\cite{T47}). The
\textit{Cage Problem} consists of finding the $(k,g)$-cages for any pair
integers $k\geq 2$ and $g\geq 3.$ However, this challenge has proven
to be very difficult even though the existence of $(k,g)$-graphs was
proved by Erd\"os and Sachs in 1963 (see~\cite{ES63}).

There is a known natural lower bound for the order of a cage, called
\textit{Moore's lower bound} and denoted by $n_0(r,g).$ It is obtained
by counting the vertices of a rooted tree, ${{T}}_{(g-1)/2}$ with
radius $(g-1)/2$, if $g$ is odd; or the vertices of a ``double-tree''
rooted at an edge (that is, two different rooted trees
${{T}}_{(g-3)/2}$ with the root vertices incident to an edge) if $g$
is even (see~\cite{CL04,EJ08}\todo{Can we add page number(s) or
  section for the textbook? N: Maybe. I don't know.}). Consequently, the challenge is to find
$(k,g)$-graphs with minimum order. In each case, the smallest known
example 
is called a \textit{record graph}. For a complete review about known
cages, record graphs, and different techniques and constructions
see~\cite{EJ08}.

As a generalization of this problem, in 1981, Chartrand, Would and
Kapoor introduced in the concept of \textit{biregular
  cage}~\cite{CH81}. A biregular $(\{r,m\};g)$-cage, for
$2\leq r < m$, is a graph of girth $g\geq 3$ whose vertices have
degrees $r$ and $m$ and are of the smallest order among all such
graphs. See~\cite{AABLL13,ABLM13,AEJ16,EJ15} for \textit{record
  biregular graphs} and the bounds given by those.

Let $n_0(\{r,m\};g)$ be the order  of  an $(\{r,m\};g)$-cage. There is also a  Moore lower bound tree construction for biregular cages (see~\cite{CH81}), which gives the following bounds:

\begin{theorem}\label{thm:Dgcages} 
For $r<m$ the following bounds hold.
\begin{gather*}
n_0(\{r, m\};g) \geq 1+\sum_{i=1}^{t-1} {m(r-1)^{i}} \text{\; for } g=2t+1 \\
n_0(\{r, m\};g) \geq 1+\sum_{i=1}^{t-2} {m(r-1)^{i}}+ (r-1)^{t-1} \text{\; for } g=2t 
\end{gather*}
\end{theorem}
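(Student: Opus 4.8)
I would prove Theorem~\ref{thm:Dgcages} by adapting the classical Moore-bound count to the biregular setting. The one new ingredient is that a $(\{r,m\};g)$-graph, having vertices of both degrees, contains at least one vertex of the larger degree $m$; this is precisely where the biregularity hypothesis enters. For odd $g$ I would grow a breadth-first-search tree rooted at such a vertex, for even $g$ a ``double tree'' rooted at an edge incident to such a vertex, count the vertices appearing in the first few levels using only the lower bound $r$ on every degree, and invoke the girth to see that the vertices counted are pairwise distinct.

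For odd girth $g=2t+1$: fix $v$ with $\deg v=m$ and let $L_0=\{v\},L_1,\dots,L_t$ be its BFS levels. Since the graph is simple, $|L_0|=1$ and $|L_1|=m$. Because the girth is $2t+1$, the vertices of $L_0\cup\dots\cup L_t$ are pairwise distinct and each $w\in L_i$ has exactly one neighbour in $L_{i-1}$ --- any violation would produce a cycle of length at most $2t<g$. Hence every $u\in L_{i-1}$ (for $2\le i\le t$) has at least $r-1$ neighbours in $L_i$, and counting the edges between consecutive levels yields $|L_i|\geq(r-1)\,|L_{i-1}|$, so $|L_i|\geq m(r-1)^{i-1}$ for $1\le i\le t$. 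Summing the levels gives
\[
 n_0(\{r, m\};g)\ \geq\ 1+\sum_{i=1}^{t} m(r-1)^{i-1}\ =\ 1+m+\sum_{i=1}^{t-1} m(r-1)^{i}\ \geq\ 1+\sum_{i=1}^{t-1} m(r-1)^{i}.
\]

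For even girth $g=2t$: choose an edge $xy$ with $\deg x=m$ (any edge at a degree-$m$ vertex works) and grow BFS trees $T_x,T_y$ of depth $t-1$ rooted at $x$ and $y$, where $T_x$ starts from the neighbours of $x$ other than $y$ (there are $m-1$ of them) and $T_y$ from the neighbours of $y$ other than $x$ (at least $r-1$ of them). The girth $2t$ again forces the vertices of $T_x\cup T_y$ to be pairwise distinct, each non-root vertex having a unique neighbour one level closer to its own root, since any failure would yield a cycle of length at most $2(t-1)+1<2t=g$. Counting levels as before,
\[
 |T_x|\ \geq\ 1+(m-1)\sum_{i=0}^{t-2}(r-1)^i,\qquad |T_y|\ \geq\ 1+\sum_{i=1}^{t-1}(r-1)^i,
\]
and adding these while collecting equal powers of $r-1$ gives
\[
 n_0(\{r, m\};g)\ \geq\ (m+1)+\sum_{i=1}^{t-2} m(r-1)^i+(r-1)^{t-1}\ \geq\ 1+\sum_{i=1}^{t-2} m(r-1)^i+(r-1)^{t-1}.
\]

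Most of this is bookkeeping; the one point deserving real care --- and the only genuine obstacle --- is the distinctness claim, i.e.\ verifying in each case that a repeated vertex, or a chord inside the relevant ball, would force a cycle of length strictly less than $g$. I would also note in passing that the computations above produce slightly more than is asked (an extra $+m$, resp.\ $+(m+1)$), so the stated bounds follow at once, and that biregularity is genuinely needed: for an $r$-regular graph, which has no vertex of degree $m$, the inequalities above can fail.
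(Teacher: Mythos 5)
Your argument is correct and is precisely the Moore-type tree/double-tree count that the paper attributes to Chartrand et al.~\cite{CH81} without reproducing a proof: a BFS ball of radius $t$ around a degree-$m$ vertex for $g=2t+1$, and a double tree of depth $t-1$ rooted at an edge incident to a degree-$m$ vertex for $g=2t$, with girth forcing distinctness. Your count in fact yields the slightly stronger bounds with the extra $+m$ (resp.\ $+(m+1)$), which of course imply the stated inequalities.
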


Also, in the same paper Chartrand, et.al. proved that:

\begin{theorem}\label{thm:Ch_2_m_cages} 

For $2<m$ the following hold:
\begin{gather*}
n(\{2, m\};g) = \frac{m(g-2)+4}{2} \text{\; for } g=2t \\
n(\{2, m\};g) = \frac{m(g-1)+2}{2} \text{\; for } g=2t+1
\end{gather*}
\end{theorem}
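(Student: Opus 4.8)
The plan is to prove each equality by exhibiting a $(\{2,m\};g)$-graph of the stated order and then showing that no $(\{2,m\};g)$-graph is smaller; note that the Moore bound of Theorem~\ref{thm:Dgcages}, specialised to $r=2$, falls short of these values by $m$, so a direct argument is needed. For the upper bound, write $g=2t$ or $g=2t+1$, take two vertices $u,v$, and join them by $m$ internally disjoint paths: in the even case all of length $t$, in the odd case one of length $t$ and $m-1$ of length $t+1$. Then $u$ and $v$ have degree $m$, every other vertex has degree $2$, and since $m\ge 3$ and $g\ge 3$ there is at least one vertex of degree $2$, so this is a simple $(\{2,m\};g')$-graph for the relevant girth $g'$ (for $g=3$ the length-$1$ path is just the edge $uv$, occurring once; for $g=4$ the paths have length $2$; otherwise all paths have length $\ge 2$, so simplicity is immediate). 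Because every internal vertex has degree $2$, each cycle of the graph is the union of exactly two of these paths, so its girth equals the smallest sum of two path lengths, namely $2t$ in the even case and $t+(t+1)=2t+1$ in the odd case, whence $g'=g$. Counting vertices gives $2+m(t-1)=\tfrac{m(g-2)+4}{2}$ for $g=2t$ and $2+(t-1)+(m-1)t=mt+1=\tfrac{m(g-1)+2}{2}$ for $g=2t+1$, so $n(\{2,m\};g)$ is at most the claimed value.

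For the lower bound, let $G$ be any $(\{2,m\};g)$-graph and fix a vertex $v$ of degree $m$; let $L_i$ be the set of vertices at distance exactly $i$ from $v$, so $|L_0|=1$ and $|L_1|=m$. The usual Moore-type computation (with the degenerate cases — where the two $v$-paths involved share a vertex beyond $v$, or the closed walk produced has length at most $2$ — disposed of separately) shows that whenever $2i+2<g$ every vertex of $L_{i+1}$ has exactly one neighbour in $L_i$ and every vertex of $L_i$ has a neighbour in $L_{i+1}$, and hence $|L_{i+1}|\ge|L_i|$. If $g=2t+1$ this holds for $1\le i\le t-1$, so $|L_i|\ge m$ for $1\le i\le t$ and $|V(G)|\ge 1+mt$. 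If $g=2t$ it holds for $1\le i\le t-2$, so $|L_i|\ge m$ for $1\le i\le t-1$; moreover a neighbour other than the parent of a vertex of $L_{t-1}$ cannot lie in $\bigcup_{j\le t-1}L_j$ without creating a cycle of length at most $2t-1<g$, so $L_t\ne\emptyset$ and $|V(G)|\ge 1+m(t-1)+1=m(t-1)+2$. (If $G$ is disconnected, apply this in the component of $v$; a component that is $m$-regular rather than biregular is in any case large enough by the ordinary Moore bound.) In both cases this matches the order of the graph constructed above, so equality holds.

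I expect the main obstacle to be the even case of the lower bound: the ball of radius $t-1$ about $v$ is exactly one vertex short of the target, so one must argue separately that the outermost level $L_t$ is non-empty — equivalently, that the radius-$(t-1)$ neighbourhood of a degree-$m$ vertex in a $(\{2,m\};2t)$-graph is never the whole vertex set. As always in Moore-type arguments, one must also be careful to rule out the degenerate situations in which the short closed walks produced fail to be genuine cycles (and, in the even case, a second such degeneracy — two level-$(t-2)$ neighbours of a level-$(t-1)$ vertex — must be excluded as well).
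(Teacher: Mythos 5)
Your proof is correct. Note that the paper does not prove Theorem~\ref{thm:Ch_2_m_cages} at all --- it is quoted from Chartrand et al.~\cite{CH81} as a known result --- so there is no in-paper argument to compare against; your construction (two hubs joined by $m$ internally disjoint paths of the appropriate lengths, which for $g=2t$ is the generalized theta graph and for $g=4$ is $K_{2,m}$) together with the BFS/Moore-type lower bound, including the extra level-$L_t$ vertex in the even case, is exactly the standard route and all the degenerate cases you flag are handled correctly.
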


\begin{theorem}\label{thm:Ch_girth4} 
For $r<m$, $n(\{r, m\};4) = 	r+m.$
\end{theorem}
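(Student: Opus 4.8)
The plan is to prove the two inequalities $n(\{r,m\};4)\le r+m$ and $n(\{r,m\};4)\ge r+m$ separately. For the upper bound it suffices to exhibit one graph, and the natural candidate is the complete bipartite graph $K_{r,m}$: since $2\le r<m$, the part of size $r$ consists of vertices of degree $m$ and the part of size $m$ of vertices of degree $r$, so both prescribed degrees actually occur; being bipartite it has no odd cycle, and since both parts have at least two vertices it contains a $4$-cycle, so its girth is exactly $4$. Hence $K_{r,m}$ is a $(\{r,m\};4)$-graph on $r+m$ vertices and $n(\{r,m\};4)\le r+m$. Note that the hypothesis $r<m$ is exactly what is needed for $K_{r,m}$ to be biregular rather than regular.

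For the lower bound, let $G$ be an arbitrary $(\{r,m\};4)$-graph. Because $G$ is biregular there is a vertex $v$ with $\deg v=m$; fix a neighbour $u$ of $v$. The one ingredient I would take from the girth hypothesis is that $G$ has no triangle, so $N(v)$ is an independent set. Now I count around $u$: since the degrees in $G$ are $r$ and $m$ and $r<m$, we have $\deg u\ge r$, and as $v\in N(u)$ the set $N(u)\setminus\{v\}$ has at least $r-1$ vertices. Each vertex $w\in N(u)\setminus\{v\}$ is different from $u$ and from $v$ by construction, and cannot lie in $N(v)$: otherwise $v,u,w$ would be three distinct mutually adjacent vertices, a triangle. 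Therefore the sets $\{v\}$, $N(v)$ and (any $r-1$ vertices of) $N(u)\setminus\{v\}$ are pairwise disjoint, whence $|V(G)|\ge 1+m+(r-1)=r+m$.

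Combining the two bounds yields $n(\{r,m\};4)=r+m$. I do not expect a genuine obstacle here: the lower bound is just a local count around an edge at a maximum-degree vertex. The two points that need care are (i) checking that $K_{r,m}$ realises both prescribed degrees — which is where $r<m$ enters — and that its girth is exactly $4$; and (ii) verifying the disjointness of $N(u)\setminus\{v\}$ from $N(v)\cup\{v\}$, which is the only place triangle-freeness is invoked. It is worth remarking that the Moore-type bound of Theorem~\ref{thm:Dgcages} specialises for $g=4$ to $n_0(\{r,m\};4)\ge r$, which is too weak, so the extra counting above is genuinely needed; and that the result is consistent with Theorem~\ref{thm:Ch_2_m_cages}, which gives $n(\{2,m\};4)=m+2$.
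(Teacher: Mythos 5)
Your proof is correct. Note that the paper does not prove Theorem~\ref{thm:Ch_girth4} at all --- it is quoted without proof from Chartrand et al.~\cite{CH81} as background --- so there is no in-paper argument to compare against. Your two halves are exactly the standard ones: $K_{r,m}$ (with $2\le r<m$ guaranteeing biregularity and girth exactly $4$) for the upper bound, and the disjointness of $\{v\}$, $N(v)$, and $N(u)\setminus\{v\}$ for an edge $uv$ with $\deg v=m$, which is just the girth-$4$ Moore count rooted at an edge, for the lower bound; both steps are airtight as written.
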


Finally, we would like to mention the closely related degree diameter problem; determine the largest graphs or digraphs of given maximum degree and given diameter. This problem has also been studied in the context of embeddability, namely:

Let $S$ be an arbitrary connected, closed surface (orientable or not) and let $n_{\Delta,D}(S)$ be the largest order of a graph of maximum degree at most $\Delta$ and diameter at most $D$, embeddable in S.

For a good survey on both the degree diameter problem and it's embedded version see \cite{MS2013}.

\subsection{Contribution}

In this paper we study a variation of the cage problem, for  when we want to find the minimum order planar $(r,g)$-graphs or \emph{regular planar cages} and $(\{r,m\};g)$-graphs or \emph{biregular planar cages}.

The $g$ parameter of a planar cage is a lower bound for the minimum
length of a face in the graph's embedding\todo{Do we mean facial cycle? N:yes, but we don't really need to define it. Thus the terminology used.
  Just to get isolated edges out of the way. N: I don't understand this comment. Also, shall we say for any planar (or combinatorial) embedding of the graph? We do not care for "any" but for a particular one.}. It is not
difficult to prove that the {regular planar cages} are the
\textit{Platonic Solids}. [Section~\ref{regular}]

For $(\{r,m\};g)$-graphs we denote as $n_p(\{r,m\};g)$ the order of a
$(\{r,m\};g)$-planar cage. In Section 3, we discover that the set of triads $(\{r,m\};g)$ for which planar $(\{r,m\};g)$-graphs may exist is
\begin{gather*}
\{(\{r, m\}, 3) | 2\leq r \leq 5, r <m \}  \hspace{0.5cm}
\{(\{r, m\}, 4) | 2\leq r \leq 3, r <m \}  \\
\{(\{r, m\}, 5) | 2\leq r \leq 3, r <m \}  \hspace{0.5cm} 
\{(\{2, m\}, g) | , 2 <m, 6\leq g \}.
\end{gather*}

We provide upper and lower bounds for all $n_p(\{r,m\};g)$. We construct planar $(\{r,m\};g)$--graphs for all possible triads and construct planar $(\{r,m\};g)$--cages for 
\begin{gather*}
(\{2, m\}, 3), (\{3, m\}, 3), (\{4, m\}, 3), (\{5, 6\}, 3), (\{5, 7\}, 3) \\
 (\{2, m\}, 4), (\{3, 4 \leq m \leq 13\}, 4), (\{3, m=5k-1\}, 4) \text{ with } k\geq 3, \\
(\{2, m\}, 5) \text{ and } (\{2, m\}, 6).
\end{gather*}

We also remark that, for the triplets of the form $(\{5,m\};3)$ and $(\{3,m\};4)$ for which we cannot assert that we have found a planar biregular cage, we provide constructions with a small excess (i.e. the upper bounds provided by the constructions and the lower bounds for  $n_p(\{5,m\};3)$ and $n_p(\{3,m\};4)$, respectively, differ only by a small constant).

In Section 4 we provide the proofs for some technical lemmas that we will use throughout the paper. Finally, in Section 5 we state some conclusions and further research directions.

\section{Regular planar cages}\label{regular}

The $(2,g)$-graphs are cycles on $g$ vertices. Since these graphs are
both planar and known to be cages, it follows that these graphs are
the $(2,g)$-planar cages. In this section we will investigate the
existence of $(k,g)$-planar graphs for $k\geq 3,$ and find smallest ones.

It is well known that any planar graph has at least one vertex of
degree at most $5,$ hence $3 \leq k \leq 5.$ Recall that
if $G$ is a planar $(k,g)$-graph, then its planar dual $G^*$ is also
planar, all its faces are of size $k,$ and its girth is bounded above
by $k;$ while the degree of its vertices is bounded below by $g.$
Using these we can deduce that $g \leq 5.$ Thus, $3 \leq g \leq 5, $
and $(k,g)$-planar graphs may only exist for $k, g \in \{3, 4, 5\}.$

We would like to highlight that if $G$ is a $(k,g)$-graph, then $G^*$
is not necessarily a $(g,k)$-graph.\todo{Did not think about this, do
  we have a example? N: look at the graphs that provide the lower bounds.}

Let $G$ be an embedded planar graph, then we will denote as
$v=v(G)$ its order, as $e=e(G)$ its size and its number of faces as
$f=f(G).$ We will now argue that:

\begin{theorem}
The {planar $(k,g)$-- cages}  \todo{We should be consistent with
  notation. $(k,g)$-planar cage vs planar $(k,g)$-cage. N: the cage is the one that achieves the lower bound. Thus, in this context it makes sense.} are the five platonic solids: the tetrahedron,
the cube, the octahedron, the dodecahedron and the icosahedron.
  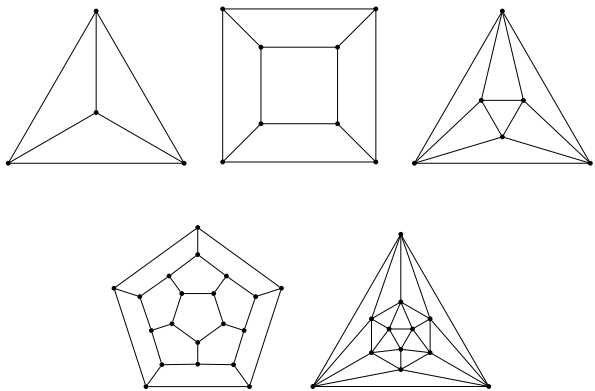
\begin{figure}[!ht]
    \centering
    \usetikzlibrary{decorations,decorations.markings,decorations.text,calc,arrows}

\begin{tikzpicture} [scale=0.9,
  _vertex/.style ={circle,draw=black, fill=black,inner sep=0.5pt},
  r_vertex/.style={circle,draw=red,  fill=red,inner sep=0.5pt},
  g_vertex/.style={circle,draw=green,fill=green,inner sep=0.5pt},
  b_vertex/.style={circle,draw=blue, fill=blue,inner sep=0.5pt},
  _edge/.style={black,line width=0.2pt},
  r_edge/.style={red,line width=0.3pt},
  g_edge/.style={green,line width=0.3pt},
  b_edge/.style={blue,line width=0.3pt},
 every edge/.style={draw=black,line width=0.3pt},
t_edge/.style={cap=round, ultra thick}]

\def\xmove{5.5}
\def\ymove{4}

\begin{scope}[xshift=-3cm,yshift=3cm]
\def\rad{1.5}

\coordinate (r) at (90:\rad);
\coordinate (g) at (330:\rad);
\coordinate (b) at (210:\rad);

\coordinate (v1) at (barycentric cs:r=19,g=0,b=0);
\coordinate (v2) at (barycentric cs:g=19,b=0,r=0);
\coordinate (v3) at (barycentric cs:b=19,r=0,g=0);
\coordinate (v4) at (barycentric cs:r=1,g=1,b=1);

\draw[_edge] (v1)--(v2);
\draw[_edge] (v2)--(v3);
\draw[_edge] (v3)--(v1);
\draw[_edge] (v1)--(v4);
\draw[_edge] (v2)--(v4);
\draw[_edge] (v3)--(v4);

\node[_vertex] (V1) at (v1) {};
\node[_vertex] (V2) at (v2) {};
\node[_vertex] (V3) at (v3) {};
\node[_vertex] (V4) at (v4) {};
\end{scope}

\begin{scope}[xshift=0cm,yshift=3.4cm]
\def\Rad{1.6}
\def\rad{0.8}

\coordinate (r) at (45:\rad);
\coordinate (g) at (135:\rad);
\coordinate (b) at (225:\rad);
\coordinate (m) at (315:\rad);

\coordinate (v1) at (r);
\coordinate (v2) at (g);
\coordinate (v3) at (b);
\coordinate (v4) at (m);

\draw[_edge] (v1)--(v2);
\draw[_edge] (v2)--(v3);
\draw[_edge] (v3)--(v4);
\draw[_edge] (v4)--(v1);

\coordinate (R) at (45:\Rad);
\coordinate (G) at (135:\Rad);
\coordinate (B) at (225:\Rad);
\coordinate (M) at (315:\Rad);

\coordinate (V1) at (R);
\coordinate (V2) at (G);
\coordinate (V3) at (B);
\coordinate (V4) at (M);

\draw[_edge] (V1)--(V2);
\draw[_edge] (V2)--(V3);
\draw[_edge] (V3)--(V4);
\draw[_edge] (V4)--(V1);

\draw[_edge] (V1)--(v1);
\draw[_edge] (V2)--(v2);
\draw[_edge] (V3)--(v3);
\draw[_edge] (V4)--(v4);

\node[_vertex] (u1) at (v1) {};
\node[_vertex] (u2) at (v2) {};
\node[_vertex] (u3) at (v3) {};
\node[_vertex] (u4) at (v4) {};
\node[_vertex] (U1) at (V1) {};
\node[_vertex] (U2) at (V2) {};
\node[_vertex] (U3) at (V3) {};
\node[_vertex] (U4) at (V4) {};
\end{scope}

\begin{scope}[xshift=3cm,yshift=3cm]
\def\Rad{1.5}
\def\rad{0.36}

\coordinate (r) at (270:\rad);
\coordinate (g) at (150:\rad);
\coordinate (b) at (30:\rad);

\coordinate (v1) at (r);
\coordinate (v2) at (g);
\coordinate (v3) at (b);

\draw[_edge] (v1)--(v2);
\draw[_edge] (v2)--(v3);
\draw[_edge] (v3)--(v1);

\coordinate (R) at (90:\Rad);
\coordinate (G) at (330:\Rad);
\coordinate (B) at (210:\Rad);

\coordinate (V1) at (R);
\coordinate (V2) at (G);
\coordinate (V3) at (B);

\draw[_edge] (V1)--(V2);
\draw[_edge] (V2)--(V3);
\draw[_edge] (V3)--(V1);

\draw[_edge] (V1)--(v2);
\draw[_edge] (V1)--(v3);
\draw[_edge] (V2)--(v1);
\draw[_edge] (V2)--(v3);
\draw[_edge] (V3)--(v1);
\draw[_edge] (V3)--(v2);

\node[_vertex] (u1) at (v1) {};
\node[_vertex] (u2) at (v2) {};
\node[_vertex] (u3) at (v3) {};
\node[_vertex] (U1) at (V1) {};
\node[_vertex] (U2) at (V2) {};
\node[_vertex] (U3) at (V3) {};
\end{scope}

\begin{scope}[xshift=-1.5cm,yshift=0cm]
\def\rad{0.4}
\def\Rad{0.72}
\def\RAd{0.9}
\def\RAD{1.3}
\foreach \angle/\i in {18/1,90/2,162/3,234/4,306/5}{
	\coordinate (v\i) at (\angle:\RAD);
}

\foreach \angle/\i in {18/6,90/7,162/8,234/9,306/10}{
	\coordinate (v\i) at (\angle:\RAd);
}

\foreach \angle/\i in {198/11,270/12,342/13,54/14,126/15}{
	\coordinate (v\i) at (\angle:\Rad);
}

\foreach \angle/\i in {198/16,270/17,342/18,54/19,126/20}{
	\coordinate (v\i) at (\angle:\rad);
}

\node[_vertex] (u1) at (v1) {};
\node[_vertex] (u2) at (v2) {};
\node[_vertex] (u3) at (v3) {};
\node[_vertex] (u4) at (v4) {};
\node[_vertex] (u5) at (v5) {};
\node[_vertex] (u6) at (v6) {};
\node[_vertex] (u7) at (v7) {};
\node[_vertex] (u8) at (v8) {};
\node[_vertex] (u9) at (v9) {};
\node[_vertex] (u10) at (v10) {};
\node[_vertex] (u11) at (v11) {};
\node[_vertex] (u12) at (v12) {};
\node[_vertex] (u13) at (v13) {};
\node[_vertex] (u14) at (v14) {};
\node[_vertex] (u15) at (v15) {};
\node[_vertex] (u16) at (v16) {};
\node[_vertex] (u17) at (v17) {};
\node[_vertex] (u18) at (v18) {};
\node[_vertex] (u19) at (v19) {};
\node[_vertex] (u20) at (v20) {};

\draw[_edge] (u1)--(u2);
\draw[_edge] (u2)--(u3);
\draw[_edge] (u3)--(u4);
\draw[_edge] (u4)--(u5);
\draw[_edge] (u5)--(u1);

\draw[_edge] (u1)--(u6);
\draw[_edge] (u2)--(u7);
\draw[_edge] (u3)--(u8);
\draw[_edge] (u4)--(u9);
\draw[_edge] (u5)--(u10);

\draw[_edge] (u14)--(u6);
\draw[_edge] (u13)--(u6);
\draw[_edge] (u15)--(u7);
\draw[_edge] (u14)--(u7);
\draw[_edge] (u11)--(u8);
\draw[_edge] (u15)--(u8);
\draw[_edge] (u12)--(u9);
\draw[_edge] (u11)--(u9);
\draw[_edge] (u13)--(u10);
\draw[_edge] (u12)--(u10);

\draw[_edge] (u11)--(u16);
\draw[_edge] (u12)--(u17);
\draw[_edge] (u13)--(u18);
\draw[_edge] (u14)--(u19);
\draw[_edge] (u15)--(u20);

\draw[_edge] (u17)--(u16);
\draw[_edge] (u18)--(u17);
\draw[_edge] (u19)--(u18);
\draw[_edge] (u20)--(u19);
\draw[_edge] (u16)--(u20);

\end{scope}

\begin{scope}[xshift=1.5cm,yshift=-3mm]
\def\Rad{1.5}
\def\RAd{0.5}
\def\rad{0.2}

\coordinate (r) at (270:\rad);
\coordinate (g) at (150:\rad);
\coordinate (b) at (30:\rad);

\coordinate (v1) at (r);
\coordinate (v2) at (g);
\coordinate (v3) at (b);

\draw[_edge] (v1)--(v2);
\draw[_edge] (v2)--(v3);
\draw[_edge] (v3)--(v1);

\coordinate (R) at (90:\Rad);
\coordinate (G) at (330:\Rad);
\coordinate (B) at (210:\Rad);

\coordinate (V1) at (R);
\coordinate (V2) at (G);
\coordinate (V3) at (B);

\draw[_edge] (V1)--(V2);
\draw[_edge] (V2)--(V3);
\draw[_edge] (V3)--(V1);

\foreach \angle/\i in {90/1, 150/2, 210/3, 270/4, 330/5, 30/6}{
	\coordinate (w\i) at (\angle:\RAd);
}

\draw[_edge] (w1)--(w2);
\draw[_edge] (w2)--(w3);
\draw[_edge] (w3)--(w4);
\draw[_edge] (w4)--(w5);
\draw[_edge] (w5)--(w6);
\draw[_edge] (w6)--(w1);

\draw[_edge] (v1)--(w5);
\draw[_edge] (v2)--(w1);
\draw[_edge] (v3)--(w1);
\draw[_edge] (v1)--(w4);
\draw[_edge] (v2)--(w2);
\draw[_edge] (v3)--(w5);
\draw[_edge] (v1)--(w3);
\draw[_edge] (v2)--(w3);
\draw[_edge] (v3)--(w6);

\draw[_edge] (V1)--(w1);
\draw[_edge] (V2)--(w4);
\draw[_edge] (V3)--(w2);
\draw[_edge] (V1)--(w2);
\draw[_edge] (V2)--(w5);
\draw[_edge] (V3)--(w3);
\draw[_edge] (V1)--(w6);
\draw[_edge] (V2)--(w6);
\draw[_edge] (V3)--(w4);

\node[_vertex] (u1) at (v1) {};
\node[_vertex] (u2) at (v2) {};
\node[_vertex] (u3) at (v3) {};
\node[_vertex] (U1) at (V1) {};
\node[_vertex] (U2) at (V2) {};
\node[_vertex] (U3) at (V3) {};
\node[_vertex] (W1) at (w1) {};
\node[_vertex] (W2) at (w2) {};
\node[_vertex] (W3) at (w3) {};
\node[_vertex] (W4) at (w4) {};
\node[_vertex] (W5) at (w5) {};
\node[_vertex] (W6) at (w6) {};

\end{scope}

\end{tikzpicture}
    \caption{The five platonic solids.}
    \label{fig:platonic}
  \end{figure}
\end{theorem}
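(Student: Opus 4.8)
The plan is to upgrade the structural restrictions already obtained — that a planar $(k,g)$-graph with $k\ge 3$ can exist only when $k,g\in\{3,4,5\}$ — into a sharp order bound via Euler's formula, and then to match each surviving pair with the corresponding solid. (The case $k=2$ was already settled: the $(2,g)$-planar cages are the cycles $C_g$, so the theorem concerns $k\ge 3$.)

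First I would reduce to the connected case: if a planar $(k,g)$-graph is disconnected, then each component is itself a planar $k$-regular graph of girth at least $g$, hence has at least a Moore bound's worth of vertices, so a disconnected example is never of minimum order. Thus a planar $(k,g)$-cage $G$ is connected and Euler's formula $v-e+f=2$ applies, with $v=v(G)$, $e=e(G)$, $f=f(G)$. Since $G$ is $k$-regular, $2e=kv$; since $G$ has girth $g$, every facial walk has length at least $g$, so summing facial walk lengths over all faces gives $2e\ge gf$, i.e. $f\le kv/g$. Substituting,
\[
2=v-e+f\le v-\frac{kv}{2}+\frac{kv}{g}=v\Bigl(1-\frac{k}{2}+\frac{k}{g}\Bigr),
\]
which, provided $2k+2g-kg>0$, rearranges to
\[
v\ \ge\ \frac{4g}{2k+2g-kg}.
\]

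Next I would evaluate this over the finitely many admissible pairs. For $(k,g)\in\{(3,3),(4,3),(3,4),(5,3),(3,5)\}$ the denominator is positive and the bound reads $v\ge 4,\,6,\,8,\,12,\,20$ respectively; the tetrahedron, octahedron, cube, icosahedron and dodecahedron are planar $(k,g)$-graphs attaining exactly these orders, so in each case a planar $(k,g)$-cage exists and has that order. For the remaining admissible pairs $(4,4),(4,5),(5,4),(5,5)$ one checks $2k+2g-kg\le 0$, so the displayed inequality $2\le v\bigl(1-\tfrac{k}{2}+\tfrac{k}{g}\bigr)$ cannot hold for any $v\ge 1$; hence no planar $(k,g)$-graph exists for these pairs. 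Combined with the already-established bounds $3\le k\le 5$ and $3\le g\le 5$, this exhausts all cases with $k\ge 3$.

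It remains to argue that the cages are \emph{exactly} these five solids and not merely graphs attaining the minimum. When $v$ meets the lower bound, equality must hold in $2e\ge gf$, so every face of an extremal graph is bounded by a $g$-cycle; thus any planar $(k,g)$-cage is a $k$-regular planar graph all of whose faces are $g$-gons. Such a graph has no bridge and no cut vertex (a leaf or a pendant block would contradict $k$-regularity or force an oversized face), and in fact is $3$-connected, so by Whitney's theorem its spherical embedding is essentially unique; starting from one face, the $g$-cycle around each neighbouring face and the fact that exactly $k$ faces meet at every vertex determine the next layer of faces with no freedom, and induction forces the combinatorial type to be the $\{g,k\}$ regular polyhedron, i.e. the corresponding Platonic solid. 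I expect this last uniqueness/propagation step to be the only real obstacle; everything preceding it is the Euler-formula computation above, which is routine.
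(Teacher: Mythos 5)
Your proposal is correct and follows essentially the same route as the paper: the handshaking lemma, Euler's formula, and the face-length bound $2e\ge gf$ combine to give $v\ge 4g/(2k-g(k-2))$, the positivity of the denominator singles out the five pairs $(3,3),(3,4),(3,5),(4,3),(5,3)$, and equality forces every face to be a $g$-gon, identifying the cages with the Platonic solids. The only difference is that you spell out the connectedness reduction and the uniqueness/rigidity of a $k$-regular planar map with all faces $g$-gons, which the paper simply asserts as known.
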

\begin{proof}
  Let $G$ be a planar \cage{}. From our discussion above
  $k,g \in \{3,4,5\}.$ By the handshaking lemma $e=\frac{v k}{2}.$
  Using the Euler's characteristic equation\todo{Shall we add a
    reminder of this equation? No, it's almost folklore.}, we find that the number of faces of
  an embedding is such that $f=\frac{v(k-2)}{2} +2.$ Also note that
  the face lengths (i.e. the number of edges of each face) are bounded
  below by $g$ and the sum of the lengths of all faces of the
  embedding\todo{Are we assuming anything about an embedding or shall
    we just say ``any embedding''? N: I've written "an" embedding.} equals $2e,$ this is:
  $2e \geq g (\frac{v(k-2)}{2} +2).$

  Hence $v, k$ and $g$ have to satisfy the inequality
  \begin{equation}\label{eq:vbound1}
    v(2k-g(k-2))-4g \geq 0.
  \end{equation}
  Thus we must have
  \begin{equation}\label{eq:kg_pairs}
    2k-g(k-2)>0.
  \end{equation}
  It is easy to check that if $k,g \in \{3,4,5\},$ then only the pairs
  $(k,g)$ satisfying~\eqref{eq:kg_pairs} are
  $(3,3), (3,4), (3,5), (4,3)$ and $(5,3)$.

  Note that~\eqref{eq:vbound1} can be rewritten as
  \[
    v\geq\frac{4g}{2k-g(k-2)}.
  \]
  Also, note that each of the feasible pairs from above provides a lower
  bound for $v$. We argue that these lower bounds are in fact tight
  and the resulting $(k,g,v=n_p(k,g))$ triplet possibilities for $(k,g)$--cages are:
\begin{center}
(a)  $(3, 3, 4)$ \hspace{0.5cm}
(b)  $(3, 4, 8)$ \hspace{0.5cm}
(c)  $(3, 5, 20)$ \\
(d)  $(4, 3, 6)$ \hspace{0.5cm}
(e)  $(5, 3, 12)$ .
\end{center}

This follows as, $v(2k-g(k-2))-4g = 0$ if and only if the size of all
faces is precisely $g$. Thus, case (a) corresponds to a map on the
plane with vertex degree $3$ and face size $3,$ this an embedding of
the tetrahedron. Similarly, we can see that cases (b), (c), (d) and
(e) are the cube, the dodecahedron the octahedron and the icosahedron,
respectively\todo{This last assertion seems quite steep. Should we also
  prove uniqueness? N: No, the platonic solids are known to be unique once you've prescribed the size of the facial cycles}.
\end{proof}

\section{Biregular planar cages}

Now, we turn our attention to planar $(\{r,m\};g)$--cages \todo{Check for
  consistency: planar $*$-cage vs $*$-planar cage}.  We may assume
without loss of generality that $2\leq r<m$ and $3 \leq g,$ as we only consider simple graphs. Also, as we have
argued before, a planar graph must have a vertex of degree less than
or equal $5,$ thus, $2\leq r\leq 5.$

Here we start again by noticing that if $y$ is the number of vertices
of $G$ with degree $r$ and $x$ is the number of vertices of degree $m$
then, by the handshaking lemma, $2e=yr+xm.$ Also, as before, we have
$2e \geq g f $ and, $f=2-v+e.$ Combining these three equations
together we have:
\begin{equation}
y[r(2-g)+2g]+x[m(2-g)+2g]-4g\geq 0
\label{eqn:inequality}
\end{equation}

Using this equation, we may prove the following Lemma:

\begin{lemma}\label{lemma:biregularcages}
  An $(\{r,m\},g)$-graph can \todo{What do we mean by ``can be''? N: otherwise is for sure not planar, but we havent proved that the cage does actually exists.} be
  planar if and only if the triplet $(\{r,m\},g)$ is in one of the
  sets:
\begin{gather*}
\{(\{r, m\}, 3) | 2\leq r \leq 5, r <m \}  \hspace{0.5cm}
\{(\{r, m\}, 4) | 2\leq r \leq 3, r <m \}  \\
\{(\{r, m\}, 5) | 2\leq r \leq 3, r <m \}  \hspace{0.5cm} 
\{(\{2, m\}, g) | , 2 <m, 6\leq g \}
\end{gather*}
\end{lemma}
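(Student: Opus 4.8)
The plan is to extract necessary conditions from inequality~\eqref{eqn:inequality} and then, in the reverse direction, exhibit (or cite) planar graphs realizing every surviving triplet. For the necessity part I would argue as follows. Since $G$ is connected and planar with at least one vertex (indeed at least one vertex of each degree, as $r<m$), both $x\ge 1$ and $y\ge 1$. Rewrite~\eqref{eqn:inequality} as
\[
y\bigl(2r-g(r-2)\bigr)+x\bigl(2m-g(m-2)\bigr)\ \ge\ 4g\ >\ 0 .
\]
The coefficient $\phi(d,g):=2d-g(d-2)=d(2-g)+2g$ is strictly decreasing in $d$ for fixed $g\ge 3$, and for fixed $d\ge 3$ it is strictly decreasing in $g$; moreover $\phi(2,g)=4>0$ for all $g$. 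Hence if $\phi(m,g)\le 0$ the whole left-hand side is at most $y\,\phi(r,g)$, so we would need $\phi(r,g)>0$ and in fact $y\,\phi(r,g)\ge 4g + x\bigl(g(m-2)-2m\bigr)$, which forces $\phi(r,g)>0$ and still leaves the triplet potentially feasible only when $\phi(r,g)$ is strictly positive. The cleanest way to organize the case analysis is therefore: first determine for which $(d,g)$ one has $\phi(d,g)>0$; since $\phi(d,g)>0\iff g<\tfrac{2d}{d-2}=2+\tfrac{4}{d-2}$, we get $\phi(3,g)>0\iff g\le 5$, $\phi(4,g)>0\iff g\le 3$, $\phi(5,g)>0\iff g\le 3$, and $\phi(d,g)>0$ for all $g\ge 3$ only when $d=2$. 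Because $r<m$ and $2\le r\le 5$, the pair $(r,g)$ must itself satisfy a positivity-or-boundedness condition, and one checks that $\phi(r,g)>0$ is necessary except possibly in a boundary case that a short separate computation rules out; this yields exactly the constraints $g=3$ with $2\le r\le 5$; $g\in\{4,5\}$ with $r\in\{2,3\}$; and $g\ge 6$ forcing $r=2$ (and then $m$ arbitrary with $m>r$, subject only to $m>2$). That is precisely the list in the statement.

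For the sufficiency direction I would, for each family in the list, point to an explicit planar $(\{r,m\};g)$-graph — many of these are produced by the constructions in Section~3 of the paper, and for the base cases one can cite Theorems~\ref{thm:Ch_2_m_cages} and~\ref{thm:Ch_girth4}: the graphs witnessing $n(\{2,m\};g)$ are subdivisions of cycles/theta-type graphs and are visibly planar, and the girth-$4$ cages on $r+m$ vertices from Theorem~\ref{thm:Ch_girth4} are easily checked to be planar for $r\le 3$. For the remaining families $(\{r,m\};3)$ with $r\in\{3,4,5\}$ and $(\{3,m\};4),(\{3,m\};5)$ one exhibits a small planar gadget of the appropriate girth containing a vertex of degree $r$ and a vertex of degree $m$ (for instance, for girth $3$, take a large "wheel-like" planar triangulation fragment and adjust degrees; for girth $4$ and $5$ analogous planar fragments). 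It suffices that \emph{some} planar graph with these degrees and girth exists, which is exactly what "can be planar" means in the statement.

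The main obstacle is the necessity analysis at the boundary, i.e. correctly handling the cases where one of the coefficients $\phi(r,g)$ or $\phi(m,g)$ is zero or negative: one must verify that a negative $\phi(m,g)$ cannot be compensated by the positive term $y\,\phi(r,g)$ for any admissible $x,y$ — this requires also using the Moore-type lower bounds (Theorem~\ref{thm:Dgcages}) to bound $x$ (the number of high-degree vertices) from below, or an independent counting argument showing $x$ cannot be made arbitrarily small relative to $y$. Concretely, when $g\ge 6$ and $r=3$, $\phi(r,g)=12-3g\le -6<0$, so \emph{both} coefficients are negative and~\eqref{eqn:inequality} fails outright; the only subtlety is ruling out $r=2$ paired with the degenerate reading of the inequality, which is immediate since $\phi(2,g)=4>0$ and the inequality becomes $4y+x\phi(m,g)\ge 4g$, always satisfiable by taking $x$ and $y$ large. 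I expect the bulk of the write-up to be this finite case check plus the catalogue of small planar examples; no single step is deep, but the bookkeeping over the pairs $(r,g)$ and the sign of $\phi(m,g)$ must be done carefully.
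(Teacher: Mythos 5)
Your proposal follows essentially the same route as the paper: both reduce to the observation that inequality~\eqref{eqn:inequality} forces the coefficient $r(2-g)+2g$ to be strictly positive (since $r<m$ makes it the larger of the two coefficients, a nonpositive value there would make the whole left-hand side negative), and then enumerate the $(r,g)$ pairs satisfying $r<2+\frac{4}{g-2}$, with sufficiency delegated, as in the paper, to the explicit constructions; your worry about needing Moore-type bounds on $x$ is unnecessary, as your own sign argument already settles every case. One harmless slip: for $r=3$ the coefficient is $6-g$ (not $12-3g$), which vanishes rather than being negative at $g=6$, but that boundary case is still excluded because strict positivity of the $r$-coefficient is required.
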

\begin{proof}
  Notice that, for equation~\eqref{eqn:inequality} to hold, we need at
  least one of the following: $r(2-g)+2g>0$ or $m(2-g)+2g>0.$ This is,
  an inequality of the form $y < \frac{2g}{g-2}=2+\frac{4}{g-2}$ must
  be satisfied for either $y=r$ or $y=m.$ As $r < m,$ if the equation
  is satisfied by $m,$ then it is automatically satisfied by $r.$
  Hence, $r$ satisfies the inequality.

  From these inequalities, it is easy to compute that the triplets
  $(\{r,m\},g) \in \mathbb{N}^3$ for which $r < 2+\frac{4}{g-2},$ with
  $2 \leq r$ and $2\leq g$ are as stated (See
  Figure~\ref{fig:inequality}).
\end{proof}

\begin{figure}
	\centering   
	\includegraphics[scale=0.2]{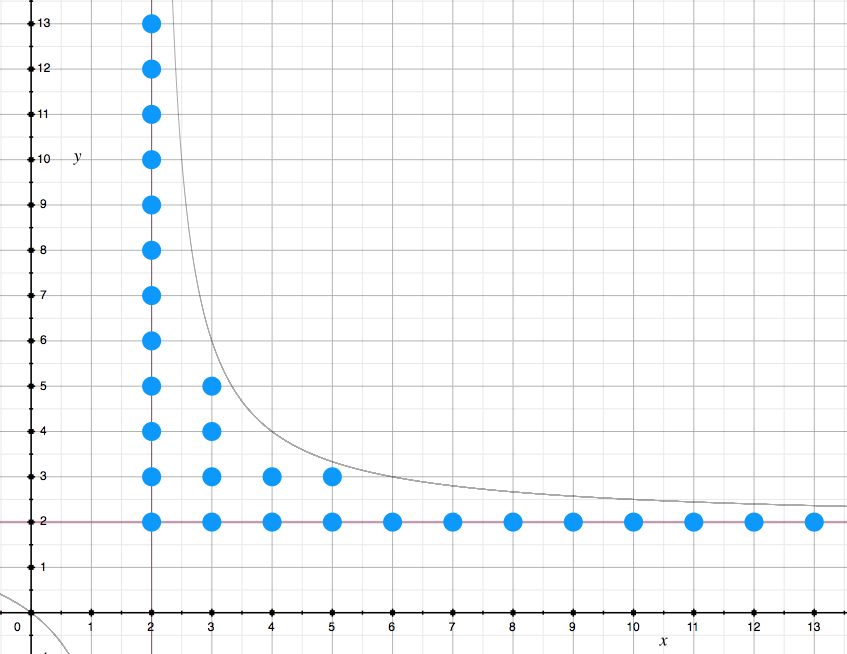} 
        \caption{The curve is the graph of the function
          $f(\alpha)=2+\frac{4}{\alpha-2}.$ The dots represent the integer
          pairs $(\alpha,\beta)$ for which $y<f(\alpha), \alpha \geq 2, \beta \geq 2.$ }
    \label{fig:inequality}
\end{figure}

\subsubsection{Lower bounds for planar biregular cages}
Recall that $n_p(\{r,m\};g)$ denotes the order of a $(\{r, m\};g)$-planar cage. In this section we will provide some easy lower bounds on $n_p(\{r,
m\};g)$.

\begin{lemma}\label{lemma:Badbound}
  For all $(\{r, m\};g)$ triplets in Lemma~\ref{lemma:biregularcages},
  $n_p(\{r, m\};g)$ increases as the number of vertices of degree $m$
  necessary to build a $(\{r, m\};g)$-planar cage increases.
\end{lemma}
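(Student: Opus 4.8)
The plan is to turn inequality~\eqref{eqn:inequality} into an explicit lower bound on the order $v=x+y$ of a planar $(\{r,m\};g)$-graph which is a strictly increasing function of $x$, the number of degree-$m$ vertices; the lemma then follows purely by monotonicity. So first I would set $A=r(2-g)+2g$ and $B=m(2-g)+2g$, so that~\eqref{eqn:inequality} reads $Ay+Bx\ge 4g$.

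The key preliminary observation is that $A>0$ for \emph{every} triplet listed in Lemma~\ref{lemma:biregularcages}: writing $A=(g-2)(2-r)+4$, the inequality $A>0$ is exactly the condition $r<2+\frac{4}{g-2}$ that was established in the proof of Lemma~\ref{lemma:biregularcages}. (We need no hypothesis on the sign of $B$, which may be negative for large $m$.) Dividing $Ay+Bx\ge 4g$ by $A>0$ gives $y\ge\frac{4g-Bx}{A}$, and therefore
\[
  v=x+y\;\ge\;x+\frac{4g-Bx}{A}\;=\;\frac{(A-B)\,x+4g}{A}.
\]

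Next I would compute $A-B=\bigl(r(2-g)+2g\bigr)-\bigl(m(2-g)+2g\bigr)=(m-r)(g-2)$, which is strictly positive since $2\le r<m$ and $g\ge 3$. Hence the right-hand side above is a strictly increasing affine function of $x$, so if every planar $(\{r,m\};g)$-graph is forced to contain at least $x_0$ vertices of degree $m$, then in particular a planar $(\{r,m\};g)$-cage has order at least $\frac{(m-r)(g-2)x_0+4g}{A}$, a quantity that strictly increases with $x_0$; this is exactly the claimed behaviour of $n_p(\{r,m\};g)$. The only genuine subtlety is not in the algebra but in phrasing: one must make precise what ``the number of degree-$m$ vertices necessary to build a planar cage'' means (e.g.\ as the minimum of $x$ over all admissible planar graphs, or over those of the structural type used in the later constructions) so that the monotonicity of the bound yields the stated monotonicity of $n_p$; the positivity $A>0$ is what legitimises the division step uniformly over all admissible triplets, and $A-B>0$ is what makes the resulting bound increasing.
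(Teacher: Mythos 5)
Your proposal is correct and follows essentially the same route as the paper: both rewrite inequality~\eqref{eqn:inequality} using $v=x+y$ (the paper substitutes $y=n_p-x$ directly) to obtain $n_p\geq\frac{4g+(A-B)x}{A}$ with $A=r(2-g)+2g>0$ guaranteed by Lemma~\ref{lemma:biregularcages} and $A-B=(m-r)(g-2)>0$, and then conclude by monotonicity in $x$. Your explicit computation of $A-B$ and your remark on what ``the number of degree-$m$ vertices necessary'' should mean are slightly more careful than the paper's wording, but the argument is the same.
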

\begin{proof}
  Recall Equation~\ref{eqn:inequality}:
  $y[r(2-g)+2g]+x[m(2-g)+2g]-4g\geq 0,$ where $x$ is the number of
  vertices of degree $m$ and $y$ is the number of vertices of degree
  $r.$ Here we may write $y=n_p-x,$ were $n_p= n_p(\{r, m\};g),$ to
  obtain:
  \begin{equation}\label{eqn:bound}
    n_p \geq \frac{4g+([r(2-g)+2g]-[m(2-g)+2g])x}{[r(2-g)+2g]}.
  \end{equation}

  As we have mentioned before, since $r < m$ then
  $[m(2-g)+2g] < [r(2-g)+2g]$. Also, from
  Lemma~\ref{lemma:biregularcages} we have
  $[r(2-g)+2g]>0$ for the triplets allowed. Given that $x>0,$ this implies that the right
  handside of Equation~\ref{eqn:bound} is positive, furthermore $n_p$
  increases as $x$ increases.
\end{proof}

As an easy corollary of Lemma~\ref{lemma:Badbound} we have the
following general lower bound:

\begin{corollary}\label{cor:badbound}
  For all $(\{r, m\};g)$ triplets in Lemma~\ref{lemma:biregularcages},
$$n_p(\{r, m\};g)\geq 1+ \frac{m(g-2)+2g}{r(2-g)+2g}.$$
\end{corollary}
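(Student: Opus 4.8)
The plan is to derive the stated bound directly from Lemma~\ref{lemma:Badbound}, or equivalently from Equation~\eqref{eqn:bound}, by using the smallest admissible value of $x$. First I would observe that any $(\{r,m\};g)$-graph has, by definition, at least one vertex of degree $m$ (since $r<m$ and the graph is not $r$-regular, having vertices of both degrees). Hence $x\geq 1$ in Equation~\eqref{eqn:bound}. By Lemma~\ref{lemma:Badbound}, the right-hand side of \eqref{eqn:bound} is increasing in $x$, so the minimum over the feasible range is attained at $x=1$, giving
\[
n_p(\{r,m\};g)\;\geq\;\frac{4g+[r(2-g)+2g]-[m(2-g)+2g]}{r(2-g)+2g}.
\]

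The remaining work is a routine algebraic simplification. I would split the fraction as $1+\frac{4g-[m(2-g)+2g]}{r(2-g)+2g}$, and then simplify the numerator: $4g-[m(2-g)+2g]=4g-2gm+m g? $ — more carefully, $m(2-g)+2g = 2m-mg+2g$, so $4g-(2m-mg+2g)=2g+mg-2m=m(g-2)+2g$. This yields exactly
\[
n_p(\{r,m\};g)\;\geq\;1+\frac{m(g-2)+2g}{r(2-g)+2g},
\]
as claimed. Throughout I would use that the triplet lies in the list of Lemma~\ref{lemma:biregularcages}, which guarantees $r(2-g)+2g>0$, so dividing by it preserves the inequality direction and the bound is meaningful.

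There is essentially no hard part here; this is a corollary in the literal sense. The only point requiring a (brief) justification is the claim $x\geq 1$, i.e.\ that every biregular $(\{r,m\};g)$-graph genuinely uses at least one vertex of the larger degree $m$ — this is immediate from the definition of a $(\{r,m\};g)$-graph as one whose vertex degrees are $r$ and $m$ with $r<m$, so a connected such graph cannot be $r$-regular. (If one allowed the degenerate case $x=0$ the statement would fail, so it is worth flagging.) Everything else is the substitution $x=1$ into the monotone bound of the previous lemma followed by the arithmetic above.
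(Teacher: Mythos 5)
Your proposal is correct and follows exactly the route the paper intends (the paper leaves the corollary unproved as "easy," but the derivation is clearly meant to be the substitution $x=1$ into Equation~\eqref{eqn:bound}, using the monotonicity from Lemma~\ref{lemma:Badbound} and the positivity of $r(2-g)+2g$). Your algebraic simplification and the remark that $x\geq 1$ by the definition of a $(\{r,m\};g)$-graph are both accurate, so nothing further is needed.
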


\begin{corollary}\label{cor:lowerbounds}
  The following lower bounds for $n_p(\{r, m\};g)$ hold for the
  triplets $(\{r, m\};g)$ in Lemma~\ref{lemma:biregularcages}:
{\small
\begin{center}
\begin{tabular}{lllll}
 
 & $r$ & $m$ & $g$ & $n_p(\{r, m\};g)$ \\ 
\hline 
$(a)$ & $2$ & $m$ & $3$ & $\geq m+1$ \\ 

$(b)$ & $3$ & $m$ & $3$ & $\geq m+1$ \\ 

$(c)$ & $4$ & $m$ & $3$ & $\geq \max \{m+1, \frac{m}{2}+4 \}$ \\ 

$(d)$ & $5$ & $m$ & $3$ & $\geq m+7$ \\ 

$(e)$ & $2$ & $m$ & $4$ & $\geq m+2$ \\ 

$(f)$ & $3$ & $m$ & $4$ & $\geq m+5$ \\ 

$(g)$ & $2$ & $m$ & $5$ & $\geq 2m+1$ \\ 

$(h)$ & $3$ & $m$ & $5$ & $\geq 3m+11$ \\ 

$(i)$ & $2$ & $m$ & $6 \leq g \text{ even }$ & $\geq \frac{m(g-2)+4}{2}$ \\ 

$(j)$ & $2$ & $m$ & $6 \leq g \text{ odd }$ & $\geq \frac{m(g-1)+2}{2}$ \\ 
\end{tabular} 
\end{center}}
\end{corollary}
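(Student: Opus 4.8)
The plan is to treat the ten rows $(a)$--$(j)$ by combining exactly two ingredients: the generic bound of Corollary~\ref{cor:badbound} (equivalently, putting $x\ge 1$ in Equation~\eqref{eqn:bound}, which is legitimate because a $(\{r,m\};g)$-graph has at least one vertex of degree $m$), and a handful of elementary counting arguments rooted at a vertex of degree $m$. For the five rows with $r=2$, namely $(a),(e),(g),(i),(j)$, I would not even need planarity: a planar $(\{2,m\};g)$-graph is in particular a $(\{2,m\};g)$-graph, so $n_p(\{2,m\};g)\ge n(\{2,m\};g)$, and the exact value of the right-hand side is supplied by Theorem~\ref{thm:Ch_2_m_cages}. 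Reading it off gives $m+1$ for $g=3$, $m+2$ for $g=4$ (which also agrees with Theorem~\ref{thm:Ch_girth4}), $2m+1$ for $g=5$, and precisely the two displayed fractions for even and odd $g\ge 6$.

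For the remaining girth-$3$ rows I would substitute $g=3$ into Corollary~\ref{cor:badbound}, obtaining $n_p(\{r,m\};3)\ge 1+\frac{m+6}{6-r}$; this equals $\frac{m}{2}+4$ when $r=4$ and $m+7$ when $r=5$, settling row $(d)$ and the second entry of row $(c)$. The bound $m+1$ in rows $(b)$ and $(c)$ (and, redundantly, $(a)$) is the trivial one: in a simple graph a vertex of degree $m$ together with its $m$ pairwise distinct neighbours already forces $n_p\ge m+1$. In row $(c)$ one simply retains whichever of $m+1$ and $\frac{m}{2}+4$ is larger, the crossover being at $m=6$, which is why the statement is phrased with a maximum.

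Next I would handle the two rows with $r=3$ and $g\in\{4,5\}$, namely $(f)$ and $(h)$, again straight from Corollary~\ref{cor:badbound}: for $r=3$ one has $r(2-g)+2g=2$ when $g=4$ and $=1$ when $g=5$, so the bound reads $1+\frac{2m+8}{2}=m+5$ and $1+\frac{3m+10}{1}=3m+11$ respectively. If a self-contained treatment of rows $(e)$ and $(g)$ is preferred to citing Theorem~\ref{thm:Ch_2_m_cages}, one argues directly from a degree-$m$ vertex $v$: when $g\ge 4$, each of the $m$ neighbours of $v$ has, since the minimum degree is $r\ge 2$, a neighbour other than $v$, and no such vertex can equal $v$ or a neighbour of $v$ without creating a triangle, so $n_p\ge m+2$; when $g\ge 5$, these $m$ second-level vertices are moreover pairwise distinct, since two of them coinciding would close a $4$-cycle through $v$, so $n_p\ge 2m+1$.

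There is no real obstacle here; the corollary is a bookkeeping consequence of results already in hand. The only step warranting a moment's care is the girth-$5$ count behind row $(g)$: one must verify both that a second-level vertex cannot lie among the first-level neighbours (a triangle) and that two second-level vertices cannot coincide (a $4$-cycle), and note that it is precisely the hypothesis $r\ge 2$ on the minimum degree that guarantees every first-level vertex has a second-level neighbour at all. Once these checks are in place, every row is a single substitution into Corollary~\ref{cor:badbound} or into Theorem~\ref{thm:Ch_2_m_cages}.
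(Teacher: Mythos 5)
Your proposal is correct and follows essentially the same route as the paper: each row is obtained either by substituting into Corollary~\ref{cor:badbound} (rows $(c)$ second entry, $(d)$, $(f)$, $(h)$) or by citing the known biregular-cage bounds (the paper uses Theorem~\ref{thm:Dgcages} where you use Theorem~\ref{thm:Ch_2_m_cages} for the $r=2$, $g\in\{3,4,5\}$ rows, but the numbers agree). The extra elementary neighbourhood-counting arguments you supply are a harmless, self-contained alternative to those citations.
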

\begin{proof}
  For each case, will enlist the lemma or theorem that implies the better bound.\\
(a) Theorem~\ref{thm:Dgcages}.
(b) Theorem~\ref{thm:Dgcages}.
(c) Theorem~\ref{thm:Dgcages} and Corollary~\ref{cor:badbound}.
(d) Corollary~\ref{cor:badbound}.
(e) Theorem~\ref{thm:Dgcages}.
(f) Corollary~\ref{cor:badbound}.
(g) Theorem~\ref{thm:Dgcages}.
(h) Corollary~\ref{cor:badbound}.
(i) Theorem~\ref{thm:Ch_2_m_cages}.
(j) Theorem~\ref{thm:Ch_2_m_cages}.
\end{proof}

\subsection{Some properties of planar graphs}\label{sec:planargraphs}

In this section we will state some technical properties of planar
graphs, which will come in handy when presenting the biregular planar
cages in later sections. We include the proofs to all
properties in Section~\ref{sec:technical_lemmas}.

\begin{lemma}\label{lem:vertexdegree}
  \todo{Maybe we are missing some assumptions(?), please see
    ``counterexamples'' below. N: neither is a counterexaple, the bipartite graphs all fall in category c. }{A planar graph of order $m+1$, at least one vertex of degree $m\geq 3$ satisfies} exactly one of the following:
\begin{enumerate}[a.]
\item It has four vertices of degree $m$ and $m=3$.
\item It has three vertices of degree $m$ and $m=4$.
\item It has at most two vertices of degree $m$.
\end{enumerate}
\end{lemma}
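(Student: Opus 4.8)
The plan is to hang everything on one elementary observation: in a simple graph on $m+1$ vertices, a vertex of degree $m$ is adjacent to \emph{every} other vertex. Let $t$ be the number of vertices of degree $m$ in our graph $G$; the hypothesis gives $t\geq 1$, and the three alternatives are distinguished by $t=4$, $t=3$ and $t\leq 2$ respectively, so they are pairwise incompatible and the word \emph{exactly} in the statement becomes automatic once we check the alternatives are exhaustive. Thus it suffices to prove three things: $t\leq 4$; if $t=4$ then $m=3$; and if $t=3$ then $m=4$. Together with $t\geq 1$ these yield precisely (a), (b), (c), with $t\in\{1,2\}$ being case (c).

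First I would spell out the structure forced by the observation. Since each of the $t$ vertices of degree $m$ is adjacent to all the others, any two of them are adjacent, so they induce a copy of $K_t$; moreover each of them is adjacent to every one of the remaining $m+1-t$ vertices, so $G$ contains $K_t$ together with all edges joining it to those $m+1-t$ vertices. If $t\geq 5$ this already contains $K_5$, contradicting planarity, so $t\leq 4$. If $t=4$ and $m\geq 4$, then $m+1-t=m-3\geq 1$, so some vertex outside the $K_4$ is adjacent to all four of its vertices, producing a $K_5$; hence $t=4$ forces $m=3$, which is case (a) (and in fact pins down $G=K_4$).

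The one genuinely delicate point is the case $t=3$. If $m\geq 5$, then $m+1-t=m-2\geq 3$, so three of the lower-degree vertices together with the three vertices of degree $m$ span a $K_{3,3}$, again impossible for a planar graph; so $m\leq 4$. It remains to exclude $m=3$: here $G$ has order $4$, and the three vertices of degree $3$ are each adjacent to the fourth vertex, so that vertex also has degree $3$, contradicting $t=3$. Hence $t=3$ forces $m=4$, which is case (b) (and forces $G=K_5-e$). The main obstacle is thus not any estimate but simply the bookkeeping of these boundary configurations --- in particular noticing that ``three universal vertices among four vertices'' is self-defeating; once the universal-vertex observation is in hand, everything else is a short $K_5/K_{3,3}$ check.
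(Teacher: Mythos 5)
Your proof is correct and follows essentially the same route as the paper's: universal vertices forcing $K_t$ joined to everything, then $K_5$ and $K_{3,3}$ obstructions to pin down the boundary cases. If anything, yours is slightly tighter --- you explicitly rule out $t\geq 5$, and you correctly invoke $K_{3,3}$ where the paper's text has a typo ($K_{6,6}$).
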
 

An \textit{outerplanar graph} is a graph that has a planar drawing for
which all vertices belong to the outer face of the drawing.

\begin{lemma}\label{lem:chorizo1}
  Let $G$ be a $(\{r,m\};g)$-planar graph, then the subgraph of $G$
  induced by all the vertices in the faces incident to a vertex $x,$
  $link_G(x),$ is an outer planar graph consisting of a (not
  necessarily disjoint) union of cycles (with or without chords) and
  paths, with at least $deg(x)$ vertices.
\end{lemma}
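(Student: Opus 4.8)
\textbf{Proof proposal for Lemma~\ref{lem:chorizo1}.}

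The plan is to unwind the definitions carefully and use planarity at each step. Let $G$ be a $(\{r,m\};g)$-planar graph, fix a planar embedding, and fix a vertex $x$. Let $F_1,\dots,F_d$ be the faces of the embedding incident to $x$; since every vertex has degree at least $r\ge 2$ and $G$ is simple with girth $g\ge 3$, the edges at $x$ are cyclically ordered around $x$ and $d=\deg(x)$ of these faces appear between consecutive edges (some faces may repeat if $x$ is a cut vertex, but each corner at $x$ determines one face-incidence). First I would observe that $link_G(x)$, the subgraph induced by the vertex set $\bigcup_i V(F_i)$, contains all the neighbours of $x$, hence has at least $\deg(x)$ vertices; that settles the cardinality claim.

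Next I would establish outerplanarity. The key geometric fact is that every vertex of $link_G(x)$ lies on a face incident to $x$; contracting $x$ to a point (or, more carefully, deleting $x$ and taking the union of the closed faces $F_i$, which form a closed disk neighbourhood of $x$ in the sphere) exhibits all of $V(link_G(x))$ on the boundary of a single region. More precisely: in the embedding, the union $\overline{F_1}\cup\cdots\cup\overline{F_d}$ of the closures of the incident faces is a closed disk $D$ whose interior contains $x$; the subgraph $link_G(x)-x$ is drawn inside $D$ with every one of its vertices on $\partial D$. Deleting $x$ and the edges at $x$ from this picture leaves a plane graph drawn in $D$ with all vertices on the outer boundary $\partial D$, which is exactly the definition of an outerplanar drawing. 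Hence $link_G(x)$ (after removing $x$ itself, or viewing $link_G(x)$ as the boundary structure) is outerplanar. I would be careful here to state whether $x$ is included in $link_G(x)$; reading the statement, $link_G(x)$ is the induced subgraph on the vertices of the incident faces, which does include $x$, so strictly I should say $link_G(x)-x$ is outerplanar, or note that $x$ together with its incident edges does not destroy outerplanarity of the relevant part — I would phrase the lemma's conclusion about the cycle/path structure as applying to $link_G(x)-x$ and remark that this is the intended reading.

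Finally, the structural description as a union of cycles (possibly with chords) and paths: an outerplanar graph has a very restricted block structure — each block is either a single edge, or a cycle possibly with non-crossing chords (a "maximal outerplanar" piece sits inside such a polygon). So $link_G(x)-x$, being outerplanar, decomposes into blocks each of which is an edge (contributing to paths) or a 2-connected outerplanar graph, i.e. a cycle with chords; glued along cut vertices these form the claimed (not necessarily disjoint) union of cycles-with-chords and paths. I expect the main obstacle to be the bookkeeping around cut vertices and repeated face incidences: if $x$ is a cut vertex of $G$, the faces incident to $x$ need not be distinct and the disk $D$ above is really obtained by "opening up" the embedding at $x$, so I must argue on the rotation system at $x$ rather than naively on faces. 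The cleanest route is to work in the embedding restricted to the closed star of $x$: the corners at $x$, taken in rotation order, trace out portions of facial walks, and concatenating these (minus $x$) gives a closed walk on $\partial D$ that visits every vertex of $link_G(x)-x$; outerplanarity then follows because this walk bounds the outer face of the plane graph $link_G(x)-x$ as drawn. Everything else is then routine application of the known block structure of outerplanar graphs.
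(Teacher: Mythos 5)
Your proposal is correct and takes essentially the same approach as the paper: both arguments rest on the fact that the vertices of $link_G(x)$ occur in a single cyclic order around $x$ (equivalently, on the boundary of a disk neighbourhood of $x$), so that planarity forbids crossing identifications or chords, which is exactly what outerplanarity and the cycles-and-paths structure amount to. You simply make explicit (via the closed star of $x$ and the block structure of outerplanar graphs) what the paper's terse ``no interleaved repetitions in the cyclic order'' argument leaves implicit.
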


Let
$link_G(x)= (\cup_{i=1}^{k}C_i) \cup (\cup_{j=1}^{k'}
  P_j ),$ where the $C_i$ are the maximal induced cycles of
$link_G(x)$ and $P_j$ are its maximally induced trees in the sense
that they cannot be extended further without acquiring edges of some
other $P_j$ or $C_i.$ Define the \emph{intersection graph } of
$link_G(x),$ $I_x;$ whose set of vertices is
$\{c_1, \ldots, c_k, p_1, \ldots p_{k'}\}$ and where we put an edge
$c_i c_j$ or $c_i p_j$ every time the cycles $C_i, C_j$ or $C_i, P_j$
intersect in a vertex. Here we note that we do not consider $P_i$ to
$P_j$ incidences as it would be redundant by the definition of the $P_j's.$

\begin{lemma}\label{lem:chorizo2}
The intersection graph, $I_x,$ where $deg(x)=m$  is a simple graph, furthermore it is a forest.
\end{lemma}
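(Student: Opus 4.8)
The plan is to prove the two assertions — that $I_x$ is simple and that it is a forest — separately, with the forest claim being the substantive one.

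First I would argue simplicity. Suppose two maximal induced cycles $C_i$ and $C_j$ of $link_G(x)$ shared two vertices $u$ and $v$. Both $C_i$ and $C_j$ are facial cycles bounding faces incident to $x$ (by Lemma~\ref{lem:chorizo1}, $link_G(x)$ is the union of the boundary walks of the faces around $x$), so in particular $u$ and $v$ are both joined to $x$. Then $u,v$ together with the two "short" sides of $C_i$ and $C_j$ that avoid each other, or else a side of one cycle together with the path $u x v$, produce a closed walk of length strictly less than $2g$; chasing the cases one finds a cycle of length $<g$ in $G$, contradicting the girth hypothesis. (This is where the assumption $\deg(x)=m$ or simply $g\ge 3$ and the induced-subgraph structure are used; I would spell out that two distinct faces around $x$ meet in at most one edge or one vertex, so a double intersection of the corresponding maximal cycles forces a short cycle.) A multi-edge $c_ip_j$ is ruled out the same way, since a $P_j$ is a tree and meeting it twice would again close up a short cycle through $x$.

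Next, the forest claim. Suppose for contradiction that $I_x$ contains a cycle, say on vertices corresponding to pieces $Q_1, Q_2, \dots, Q_t, Q_1$ (each $Q_i$ a maximal induced cycle $C$ or maximal induced path $P$ of $link_G(x)$), with consecutive pieces sharing a vertex of $link_G(x)$. Pick a shared vertex $w_i \in Q_i \cap Q_{i+1}$ for each $i$ (indices mod $t$), and note all $w_i$ are distinct (distinctness uses simplicity and the maximality of the pieces). Each $w_i$ is adjacent to $x$ in $G$. Now walk from $w_i$ to $w_{i+1}$ along the piece $Q_{i+1}$: this gives a walk inside $link_G(x)$, and concatenating the edges $x w_i$ one gets a closed walk through $x$. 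The key point is that a closed walk visiting $x$ more than once can be shortened to a cycle through $x$, and by routing through the pieces one produces a cycle using at least three of the edges at $x$ — in fact the $w_i$'s together with $x$ already give, for $t\ge 2$, a cycle $x w_1 \cdots w_2 \cdots x$ or, more carefully, one exploits that the closed walk $w_1 Q_2 w_2 Q_3 \cdots Q_1 w_1$ lies in $link_G(x)$, hence in $G\setminus x$, to get a cycle in $link_G(x)$; combining it with two spokes $xw_i, xw_j$ of minimal "angular" distance yields a cycle of length $<g$ unless the configuration is very restricted. I would make this precise by a minimal-counterexample / shortest-cycle-in-$I_x$ reduction: take a shortest cycle in $I_x$, derive from it a cycle in $G$ passing through $x$ whose length is bounded by a function of $g$ that is strictly less than $g$, contradicting $\text{girth}(G)=g$.

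The main obstacle is the bookkeeping in the forest step: translating a cycle in the combinatorial object $I_x$ into a genuinely short cycle of $G$, while correctly accounting for the fact that the pieces $C_i, P_j$ are themselves induced cycles/paths (so traversing "the short way" around a $C_i$ may or may not be short) and that two pieces meet in exactly one vertex (by simplicity). I expect the cleanest route is: show first that every piece has length $\ge g$ and every shared vertex is a neighbour of $x$; then observe that a cycle in $I_x$ of length $t$ yields $t$ distinct neighbours $w_1,\dots,w_t$ of $x$ that lie on a single cycle of $link_G(x)$ obtained by gluing arcs of the pieces; finally use that $link_G(x)$ is outerplanar together with the girth bound to rule out $t\ge 2$ (for $t=2$ the two pieces would then share two vertices, contradicting simplicity; for $t\ge 3$ the glued cycle plus two well-chosen spokes at $x$ beats the girth). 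The case analysis dividing according to whether consecutive pieces are cycles or paths is routine once this framework is set up.
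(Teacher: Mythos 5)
Your strategy for both halves rests on two claims that do not hold, so the proof cannot be completed along the lines you describe. First, you assert that a vertex shared by two pieces of $link_G(x)$ (and more generally the vertices $u,v$ or $w_i$ you select) is adjacent to $x$. This is false: $link_G(x)$ is induced by \emph{all} vertices of the faces incident to $x$, so it contains second-layer vertices at distance two from $x$; indeed the paper's girth-$4$ analysis (Lemma~\ref{lem:linklowbound}) depends on the fact that consecutive vertices of $link_G(x)$ cannot both be neighbours of $x$. Without adjacency to $x$ your ``spokes'' $xw_i$ do not exist and none of the short closed walks you build through $x$ are available. Second, and more fundamentally, the girth bound is the wrong obstruction here: every piece $C_i$ is itself a cycle of $G$ and hence has length at least $g$, so arcs of pieces can be arbitrarily long, and there is no a priori reason that a cycle assembled from such arcs (with or without two spokes at $x$) has length below $g$ --- for $g=3$ there is literally nothing to beat. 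Your own hedges (``unless the configuration is very restricted'', ``I would make this precise'') sit exactly where the argument fails to close.

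The mechanism the paper actually uses is outerplanarity of $link_G(x)$ (Lemma~\ref{lem:chorizo1}) combined with the maximality of the pieces, and girth never enters. For simplicity: if $C_i$ and $C_j$ meet in two vertices $u,v$ that are adjacent on both cycles, their union is a single larger induced cycle with a chord, contradicting maximality; otherwise one obtains three internally disjoint $u$--$v$ paths of length at least two, i.e.\ a theta subgraph, which is not outerplanar. For acyclicity: a cycle in $I_x$ forces a cycle $D$ in $G\setminus\{x\}$ inside the union of the participating pieces; $D$ separates the plane, $x$ lies on one side, and since every vertex of $link_G(x)$ must remain visible from $x$, no vertex of $link_G(x)$ can lie on the far side of $D$, which forces $D$ to absorb all vertices of the participating cycles --- again contradicting their maximality. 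I recommend you discard the girth-counting framework and rebuild the proof on the outerplanarity-plus-maximality argument.
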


\begin{lemma}
\label{lemma:outerplanar}
If $G$ is an outerplanar graph of order $\geq 4,$ such that all of its
vertices have degree at least $2$ then it has at least two
non-adjacent vertices of degree exactly $2.$
\end{lemma}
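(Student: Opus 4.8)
The plan is to deduce the statement from the structure of \emph{maximal} outerplanar graphs, which are exactly the triangulations of a polygon. First I would show that any outerplanar graph $G$ on $n$ vertices is a spanning subgraph of some maximal outerplanar graph $H$ with $V(H)=V(G)$: fix an outerplanar drawing of $G$ and let $v_1,\dots,v_n$ be the cyclic order in which the vertices occur on the outer face; add every missing edge $v_iv_{i+1}$ (drawn as a short arc just outside the current picture, which is possible precisely because $v_i$ and $v_{i+1}$ are consecutive on the outer face), so that the outer boundary becomes the Hamilton cycle $v_1v_2\cdots v_nv_1$ and the graph is $2$-connected; finally add non-crossing chords until every inner face is a triangle. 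The resulting $H$ is maximal outerplanar and contains $G$.

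Next I would establish the key fact: \emph{every maximal outerplanar graph $H$ on $n\ge 4$ vertices has two non-adjacent vertices of degree $2$}. Since $H$ triangulates the $n$-gon $v_1\cdots v_n$, it has $n-2$ triangles and its weak dual $T$ (triangles, adjacent when they share a chord) is a tree; as $n-2\ge 2$, $T$ has at least two leaves. A leaf of $T$ is an \emph{ear}: a triangle two of whose sides are boundary edges and whose third side is a chord, and the vertex of this triangle not lying on the chord has degree exactly $2$ in $H$. Distinct ears have distinct apices (an ear has a unique degree-$2$ vertex), and two apices $u,w$ cannot be adjacent: if they were, then since both have degree $2$ the edge $uw$ would be a boundary edge, hence would lie in exactly one triangle of the triangulation, and that triangle would then be an ear with two degree-$2$ vertices --- a contradiction.

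Finally I would combine the two steps. Applying the second fact to the graph $H$ from the first step yields non-adjacent $u,w$ with $\deg_H(u)=\deg_H(w)=2$. Then $\deg_G(u)\le\deg_H(u)=2$, and since $G$ has minimum degree at least $2$ we get $\deg_G(u)=2$; likewise $\deg_G(w)=2$; and $uw\notin E(H)$ gives $uw\notin E(G)$ because $E(G)\subseteq E(H)$. This proves the lemma.

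I expect the only delicate point to be the first step, namely checking that one really can enlarge an \emph{arbitrary} outerplanar graph (possibly disconnected, or with cut vertices) to a maximal one on the same vertex set; the trick is to first insert the boundary edges to make the outer face a Hamilton cycle, after which the usual triangulation argument applies. Should one prefer to avoid this extension, the same conclusion can be reached directly through the block--cut-vertex tree: pick two end blocks, note each is $2$-connected with at least three vertices (an end block equal to $K_2$ would force a degree-$1$ vertex of $G$), and read off from each a degree-$2$ vertex of $G$ distinct from its cut vertex --- using, in the $2$-connected case, the weak-dual-tree argument applied to the unique Hamilton cycle bounding the outer face. Vertices taken from different end blocks are automatically non-adjacent, so this also gives the result.
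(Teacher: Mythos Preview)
Your proof is correct and takes a genuinely different route from the paper's. The paper argues by induction on the order: in the inductive step it takes the cycle bounding the outer face, picks a chord $v_iv_j$, splits $G$ into two smaller outerplanar pieces $G_1,G_2$ sharing the edge $v_iv_j$, and extracts from each piece a degree-$2$ vertex distinct from $v_i,v_j$; these two vertices are then non-adjacent in $G$. You instead pass to a maximal outerplanar supergraph $H\supseteq G$, read off two non-adjacent degree-$2$ vertices from the ears of $H$ (the leaves of the weak dual tree of the polygon triangulation), and pull the conclusion back via $E(G)\subseteq E(H)$ together with $\delta(G)\ge 2$. The paper's induction is self-contained and avoids invoking any structure theory, but as written it tacitly assumes $2$-connectedness when it speaks of ``the cycle that bounds the outerface''; your approach trades that assumption for the (standard) extension lemma, after which the ear/dual-tree machinery settles everything in one stroke and handles the disconnected and cut-vertex cases uniformly. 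Your block--cut-tree alternative is in fact the clean way to cover exactly the non-$2$-connected situations that the paper's argument glosses over.
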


\subsection{Girth 3}

The next theorem states bounds for the \bicagesT[r,m] and mentions the record graphs. The proofs and descriptions of each graph family are distributed in the following subsections.

\begin{theorem} \label{thm:girth3} The \bicagesT[r,m] are as follows:
{\footnotesize
\begin{center}
\begin{tabular}{lllll}
$r$  & $m$ & $n_p(\{r,m\}, 3)$ & Graphs & Full list \\ 
 \hline 
$2$ & $m$ & $=m+1$ & $T'_{m-1}, T_{\frac{m}{2}}$ & yes \\ 

$3$ & $m$ & $=m+1$ & $W_m, M_m, \mathbf{W}_3 \text{ for } m=4$ & yes \\ 

$4$ & $m$ & $=m+2$ & $\mathbf{W}_{m+2}$ & - \\ 

$5$ & $m =6, 7$ & $= 2m+2$& $I_m$ & - \\ 

$5$ & $8\leq m \leq 13$ & $m+7 \leq n_p(\{r,m\}, 3) \leq 2m+2$& $I_m$ & - \\ 
$5$ & $14 \leq m$ & $m+7 \leq n_p(\{r,m\}, 3) \leq m+15$& $A_m$ & - \\ 
\end{tabular} 
\end{center}}
\end{theorem}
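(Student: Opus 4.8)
The proof has three ingredients: the lower bounds, explicit constructions meeting (or nearly meeting) them, and a classification establishing the ``full list'' entries in the rows $r\in\{2,3\}$.

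\emph{Lower bounds.} For $r\in\{2,3\}$ the bound $n_p(\{r,m\};3)\ge m+1$ is exactly Corollary~\ref{cor:lowerbounds}(a)--(b), and for $r=5$ the bound $n_p(\{5,m\};3)\ge m+7$ is Corollary~\ref{cor:lowerbounds}(d); both transfer verbatim. The case $r=4$ is the only one where more is needed, since Corollary~\ref{cor:lowerbounds}(c) gives only $m+1$ while the theorem asserts $m+2$. I would rule out order $m+1$ directly: a putative $(\{4,m\};3)$-planar graph $G$ on $m+1$ vertices has a vertex $x$ of degree $m$, hence adjacent to all other vertices, so $G-x$ is outerplanar (attaching a dominating vertex preserves planarity precisely for outerplanar graphs) and has minimum degree $\ge 3$ since $\delta(G)=r=4$. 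As $4=r<m$, Lemma~\ref{lem:vertexdegree} puts $G$ in case (c), so at most two of its vertices have degree $m$, and hence at least $m-1$ vertices of $G-x$ have degree $3$. But an outerplanar graph on $m\ge 5$ vertices with minimum degree $\ge 2$ has at least two vertices of degree exactly $2$ by Lemma~\ref{lemma:outerplanar}, leaving at most $m-2$ vertices of larger degree --- a contradiction. Hence $n_p(\{4,m\};3)\ge m+2$.

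\emph{Constructions and the $r=5$ equalities.} For each family named in the table I would give its explicit description (deferred to the respective subsections) and check that it is planar, has girth exactly $3$, has degree set $\{r,m\}$, and has the stated order: $T'_{m-1}$ (the graph $K_{2,m-1}$ with the edge between its two branch vertices added) and $T_{m/2}$ (the friendship graph, for even $m$), both of order $m+1$; $W_m$ (the $m$-wheel), $M_m$, and, when $m=4$, $\mathbf{W}_3=K_5-e$, all of order $m+1$; $\mathbf{W}_{m+2}$ (the $m$-gonal bipyramid) of order $m+2$; and $I_m$ (valid for $m\le 13$) and $A_m$ (valid for $m\ge 14$) of orders $2m+2$ and $m+15$. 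Combined with the lower bounds this gives $n_p(\{2,m\};3)=n_p(\{3,m\};3)=m+1$, $n_p(\{4,m\};3)=m+2$, and the displayed sandwiching for $r=5$. To upgrade the $r=5$ bound to an equality when $m\in\{6,7\}$ one shows order below $2m+2$ is impossible: the planar edge count $e\le 3v-6$ together with $2e=5v+(\text{number of degree-}m\text{ vertices})$ forces any such graph to be a triangulation on exactly $m+7$ vertices with one vertex of degree $m$ and all others of degree $5$, and a short, essentially finite analysis of the link of the degree-$m$ vertex shows no such triangulation exists. I expect this step, together with the design of the extremal families $I_m$ and $A_m$ (arranging a planar graph with a high-degree vertex, girth $3$, and order as close as possible to $m+7$), to be the main obstacle, since this is where the generic counting stops helping.

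\emph{Classification for $r\in\{2,3\}$.} Let $G$ be a $(\{r,m\};3)$-planar cage, so $|V(G)|=m+1$ and $G$ has a vertex $x$ of degree $m$, adjacent to all others. By Lemma~\ref{lem:vertexdegree}, either $G$ has four vertices of degree $m=3$ --- impossible, as then $G=K_4$ is regular and has no vertex of degree $r$ --- or three vertices of degree $m=4$, possible only when $r=3$, in which case the remaining two vertices are adjacent to all three of these and so have degree exactly $3$, giving $G=K_5-e=\mathbf{W}_3$; or at most two vertices have degree $m$. In this last case, if $x$ is the unique vertex of degree $m$ then $G-x$ is $(r-1)$-regular and outerplanar, and reattaching $x$ must preserve planarity: for $r=2$ this forces $G-x$ to be a perfect matching, so $m$ is even and $G=T_{m/2}$; for $r=3$ it forces $G-x$ to be a single cycle, since a dominating vertex cannot be added planarly to a disjoint union of two or more cycles, giving $G=W_m$. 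If instead two vertices $x,x'$ have degree $m$, both dominate, so $xx'\in E(G)$ and the remaining $m-1$ vertices already have degree $2$; by case (c) of Lemma~\ref{lem:vertexdegree} none of them has degree $m$, so for $r=2$ they are independent and $G=T'_{m-1}$, while for $r=3$ they must carry a perfect matching, so $m$ is odd and $G=M_m$. This exhausts all possibilities, so the lists in the final column are complete.
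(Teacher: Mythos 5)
Your overall architecture coincides with the paper's: the lower bounds come from Corollary~\ref{cor:lowerbounds} and Lemma~\ref{lem:vertexdegree}, the upper bounds from the families $T'_{m-1}$, $T_{m/2}$, $W_m$, $M_m$, $\mathbf{W}_3$, $\mathbf{W}_{m+2}$, $I_m$, $A_m$, and the ``full list'' rows from deleting a dominating vertex of degree $m$ and classifying the $(r-1)$-regular remainder. Your treatment of $r=4$ is in fact a small improvement: by observing that $G-x$ is outerplanar with minimum degree $3$ you dispose of both subcases of Lemma~\ref{lem:vertexdegree}(c) at once via Lemma~\ref{lemma:outerplanar}, whereas the paper needs a separate $K_5$-minor contraction argument when there are two vertices of degree $m$.

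There are, however, two real gaps. First, the equalities $n_p(\{5,m\};3)=2m+2$ for $m=6,7$: the paper obtains these by an exhaustive computer search (Magma together with geng) ruling out planar $(\{5,6\};3)$-graphs on $13$ vertices and planar $(\{5,7\};3)$-graphs on $14$ vertices. Your reduction of that task to the nonexistence of a planar triangulation on $m+7$ vertices with one vertex of degree $m$ and all others of degree $5$ is correct ($e\le 3v-6$ and the handshake force exactly one vertex of degree $m$ and $e=3v-6$, and parity disposes of $15$ vertices when $m=7$), but the decisive claim that a ``short, essentially finite analysis of the link'' shows no such triangulation exists is asserted rather than carried out; the candidate degree sequences pass the standard obstructions (e.g. $\sum_v(6-\deg v)=12$), and the authors themselves resorted to a computer here, so this step cannot be waved through. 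Second, in the classification for $r=3$ with a unique vertex $x$ of degree $m$, your justification that $G-x$ must be a single $m$-cycle --- ``a dominating vertex cannot be added planarly to a disjoint union of two or more cycles'' --- is false: the join of one vertex with $C_3\cup C_3$ is two copies of $K_4$ glued at a vertex, which is planar and yields a $(\{3,6\};3)$-graph of order $7=m+1$ distinct from $W_6$. The paper's own proof makes the same silent jump from ``$2$-regular'' to ``an $m$-cycle,'' so the defect is inherited rather than introduced; but the planarity claim you supply to close it does not hold, and in fact exhibits a graph missing from the $r=3$ ``full list.''
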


\subsubsection{Planar \bicagesT[2,m]}

First note that \bicageT[2,m] has at least $m+1$ vertices. Let $m=2l,$
and let $T_l$ be the graph consisting of $l$ triangles sharing a
common vertex, we will refer to it as the \textit{ windmill graph.}
(See Figure~\ref{fig:2mcages}.) Then $V(T_l)=2l+1=m+1,$ and $T_l$ has
exactly one vertex of degree $m,$ $m$ vertices of degree $2$ and by
construction it has girth $3.$

Let $l$ be even or odd, we denote by $T'_{l}$ the graph consisting of
$l-1$ triangles sharing a common edge, which we will refer to as the
\textit{pinwheel graph}. (See Figure~\ref{fig:2mcages}.) Then
$V(T'_l)=l+1,$ and $T'_l$ has exactly two vertices of degree $l,$
$l-1$ vertices of degree $2$ and by construction it has girth $3.$

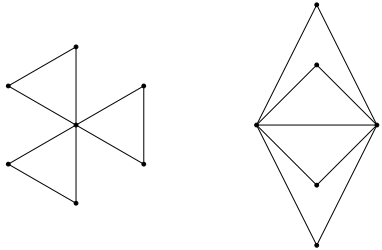
\begin{figure}[!ht]
  \centering
\begin{tikzpicture} [scale=0.8,
  _vertex/.style ={circle,draw=black, fill=black,inner sep=0.5pt},
  r_vertex/.style={circle,draw=red,  fill=red,inner sep=0.5pt},
  g_vertex/.style={circle,draw=green,fill=green,inner sep=0.5pt},
  b_vertex/.style={circle,draw=blue, fill=blue,inner sep=0.5pt},
  _edge/.style={black,line width=0.2pt},
  r_edge/.style={red,line width=0.3pt},
  g_edge/.style={green,line width=0.3pt},
  b_edge/.style={blue,line width=0.3pt},
 every edge/.style={draw=black,line width=0.3pt},
t_edge/.style={cap=round, ultra thick}]

\begin{scope}[xshift=3cm]
\coordinate (O) at (0,0);
\coordinate (P) at (2,0);
\draw[_edge] (O)--(P);
\foreach \i in {-2,-1,1,2}{
	\coordinate (v\i) at (1,\i);
	\draw[_edge] (v\i)--(O);
	\draw[_edge] (v\i)--(P);
	\node[_vertex] (u\i) at (v\i) {};
}
\node[_vertex] at (O) {};
\node[_vertex] at (P) {};
\end{scope}

\begin{scope}[xshift=0cm]
\def\RAD{1.3}
\foreach \angle/\i in {90/1, 150/2, 210/3, 270/4, 330/5, 30/6}{
	\coordinate (v\i) at (\angle:\RAD);
}
\coordinate (v0) at (0,0);

\foreach \i/\j in {1/2,3/4,5/6}{
\draw[_edge] (v\i)--(v\j);
\draw[_edge] (v\i)--(v0);
\draw[_edge] (v\j)--(v0);
}

\foreach \i/\j in {0,1,2,3,4,5,6}{
\node[_vertex] (u\i) at (v\i) {};
}
\end{scope}

\end{tikzpicture}
  \caption{The graphs $T_{3}$ and $T'_{4}$ respectively.}
  \label{fig:2mcages}
\end{figure}

Both $T'_{l}$ and $T_l$ are clearly planar graphs, thus, they are
clearly \bicagesT[2,m]. Furthermore they are the only planar cages.

\begin{lemma}
  Let $m\geq 3.$ The graphs $T'_{m-1}$ and $T_{m/2}$ for $m$ even are
  the only possible planar \bicagesT[2,m].
\end{lemma}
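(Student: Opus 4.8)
The plan is to first pin down the order of a planar $(\{2,m\};3)$-cage and then classify all planar $(\{2,m\};3)$-graphs of that order. By Corollary~\ref{cor:lowerbounds}(a) we have $n_p(\{2,m\};3)\ge m+1$, while $T'_{m-1}$ (and $T_{m/2}$ when $m$ is even) were shown above to be planar $(\{2,m\};3)$-graphs on $m+1$ vertices, so they realize this bound; hence $n_p(\{2,m\};3)=m+1$. It then remains to prove that \emph{every} planar $(\{2,m\};3)$-graph $G$ on $m+1$ vertices is isomorphic to $T'_{m-1}$, or to $T_{m/2}$ when $m$ is even.

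The key structural observation is that, since $G$ has exactly $m+1$ vertices, any vertex of degree $m$ is adjacent to all of the remaining $m$ vertices, hence is universal; in particular such a vertex exists (a biregular $(\{2,m\};3)$-graph realizes both degrees, and $m\ge 3$ because $r=2<m$) and $G$ is connected. Let $x$ be the number of vertices of degree $m$. Applying Lemma~\ref{lem:vertexdegree} to $G$ gives three possibilities: $x\le 2$, or $(x,m)=(4,3)$, or $(x,m)=(3,4)$. I would dispose of the last two as follows: if $(x,m)=(4,3)$ the four mutually adjacent degree-$3$ vertices already exhaust $V(G)$, so $G=K_4$, which has no vertex of degree $2$ and is therefore not biregular; if $(x,m)=(3,4)$ then $G$ has three mutually adjacent degree-$4$ vertices, each also adjacent to the two remaining vertices, which forces those two vertices to have degree $3$, again contradicting biregularity. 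Thus $1\le x\le 2$.

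For $x=2$, let $v,w$ be the vertices of degree $m$. Both being universal, $vw\in E(G)$ and each of the other $m-1$ vertices is adjacent to both $v$ and $w$; having degree $2$, such a vertex has no further neighbour, so the $m-1$ remaining vertices form an independent set. Hence $G$ is the edge $vw$ together with $m-1$ vertices each completing a triangle on $vw$, i.e. $G\cong T'_{m-1}$. For $x=1$, let $v$ be the unique vertex of degree $m$, adjacent to all of $u_1,\dots,u_m$, each of degree $2$; then each $u_i$ has exactly one neighbour besides $v$, which must be some $u_j$ (it cannot be $v$ again, and there is no other vertex). So the $u_i$ induce a $1$-regular graph, that is, a perfect matching, which forces $m$ to be even, and each matched pair together with $v$ forms a triangle; thus $G\cong T_{m/2}$. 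In both cases $G$ contains a triangle and is simple, so its girth is exactly $3$, which closes the argument.

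I expect the only mildly delicate step to be the elimination of the two exceptional configurations of Lemma~\ref{lem:vertexdegree}; the crux there is simply that in each of them the vertices that ought to have degree $2$ are forced to be adjacent to every degree-$m$ vertex, and hence have degree $3$. Everything else follows mechanically from the fact that a degree-$m$ vertex in an $(m+1)$-vertex graph is adjacent to all other vertices.
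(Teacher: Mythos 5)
Your proof is correct and follows essentially the same route as the paper: invoke Lemma~\ref{lem:vertexdegree} to bound the number of degree-$m$ vertices by two, then observe that with two such (universal) vertices the rest form an independent set giving $T'_{m-1}$, and with one such vertex the rest form a perfect matching giving $T_{m/2}$. If anything you are slightly more careful than the paper, since you explicitly rule out the exceptional configurations (a) and (b) of Lemma~\ref{lem:vertexdegree} using biregularity, a step the paper's proof passes over silently.
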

\begin{proof}
  The graphs are clearly planar \bicagesT[2,m]. To prove that they are
  unique we can argue, by Lemma~\ref{lem:vertexdegree}, that any
  planar \bicageT[2,m] has at most two vertices of degree $m.$

  Assume the graph $G, $ a planar \bicageT[2,m], has exactly one
  vertex of degree $m.$ Then $G-\{v\}$ is a $1$-regular graph with $m$
  vertices. Hence $G-\{v\}$ is a perfect matching, and necessarily $m$
  is even. Clearly, $G$ must be isomorphic to $T_{m/2}.$

  Now, assume the planar \bicageT[2,m], $G,$ has two vertices of
  degree $m$, say $u, v$ then $G-\{u, v\}$ is a $0$-regular graph with
  $m-1$ vertices. That is, an independent set of size $m-1, $ where
  each vertex is adjacent to both $u$ and $v$ in $G$ and $uv$ is also
  an edge of $G.$ Clearly, $G$ must be isomorphic to $T'_{m-1}$
\end{proof}

\subsubsection{Planar \bicagesT[3,m]}

We denote by $W_{l}$ the \textit{wheel} graph on $l+1$ vertices, see
Figure~\ref{fig:wheels}. This graph has one vertex of degree $l$ and
$l$ vertices of degree $3.$
 
 \begin{figure}[!ht]
    \centering
    \usetikzlibrary{decorations,decorations.markings,decorations.text,calc,arrows}

\begin{tikzpicture} [scale=0.9,
  _vertex/.style ={circle,draw=black, fill=black,inner sep=0.5pt},
  r_vertex/.style={circle,draw=red,  fill=red,inner sep=0.5pt},
  g_vertex/.style={circle,draw=green,fill=green,inner sep=0.5pt},
  b_vertex/.style={circle,draw=blue, fill=blue,inner sep=0.5pt},
  _edge/.style={black,line width=0.2pt},
  r_edge/.style={red,line width=0.3pt},
  g_edge/.style={green,line width=0.3pt},
  b_edge/.style={blue,line width=0.3pt},
 every edge/.style={draw=black,line width=0.3pt},
t_edge/.style={cap=round, ultra thick}]

\begin{scope}[xshift=-4cm]
\def\RAD{1.3}
\foreach \angle/\i in {18/1,90/2,162/3,234/4,306/5}{
	\coordinate (v\i) at (\angle:\RAD);
}
\coordinate (v0) at (0,0);

\foreach \i/\j in {1/2,2/3,3/4,4/5,5/1}{
\draw[_edge] (v\i)--(v\j);
\draw[_edge] (v\i)--(v0);
}

\foreach \i/\j in {0,1,2,3,4,5}{
\node[_vertex] (u\i) at (v\i) {};
}

\end{scope}

\begin{scope}[xshift=0cm]
\def\RAD{1.3}
\foreach \angle/\i in {90/1, 150/2, 210/3, 270/4, 330/5, 30/6}{
	\coordinate (v\i) at (\angle:\RAD);
}
\coordinate (v0) at (0,0);

\foreach \i/\j in {1/2,2/3,3/4,4/5,5/6,6/1}{
\draw[_edge] (v\i)--(v\j);
\draw[_edge] (v\i)--(v0);
}

\foreach \i/\j in {0,1,2,3,4,5,6}{
\node[_vertex] (u\i) at (v\i) {};
}

\end{scope}

\begin{scope}[xshift=4cm]
\def\RAD{1.3}
\foreach \angle/\i in {90/1, 141/2, 192/3, 243/4, 294/5, 345/6, 36/7}{
	\coordinate (v\i) at (\angle:\RAD);
}
\coordinate (v0) at (0,0);

\foreach \i/\j in {1/2,2/3,3/4,4/5,5/6,6/7,7/1}{
\draw[_edge] (v\i)--(v\j);
\draw[_edge] (v\i)--(v0);
}

\foreach \i/\j in {0,1,2,3,4,5,6,7}{
\node[_vertex] (u\i) at (v\i) {};
}

\end{scope}

\end{tikzpicture}
    \caption{From left to right, wheels {$W_5$, $W_6$ and $W_7$} respectively.}
    \label{fig:wheels}
  \end{figure}
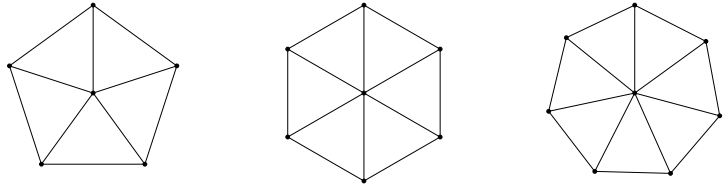
  
  Let $\mathbf{W}_{l}$ be the a graph obtained from the wheel graph
  $W_{l}$ by duplicating its vertex of degree $l$; {we will refer to
  $\mathbf{W}_{l}$ as a \textit{biwheel}. Notice that, $\mathbf{W}_{3}$ is isomorphic to $K_5$ minus one edge.

  Let $M_{l},$ the \textit{double windmill}, be the graph obtained
  from the windmill graph $T_{l-1}$ by duplicating its vertex of degree $l-1$ and making these two vertices adjacent.
  
\begin{lemma}
Let $m\geq 4,$ then, the only possible planar \bicagesT[3,m] are the double wheel $\mathbf{W}_{3}$ when $m=4$, the double windmill $M_{m}$ for $m$ odd and; the wheels $W_{m}$, for all $m$.
\end{lemma}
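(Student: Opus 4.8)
The claim is that for $m\ge 4$ the only planar $(\{3,m\};3)$-cages — graphs of order exactly $m+1$ with girth $3$ whose degrees are $3$ and $m$ — are $W_m$, the double windmill $M_m$ (for $m$ odd), and the biwheel $\mathbf W_3$ (for $m=4$). Since a $(\{3,m\};3)$-planar cage has $n_p(\{3,m\};3)=m+1$ by Corollary~\ref{cor:lowerbounds}(b), such a graph has $m+1$ vertices and contains a vertex $v$ of degree $m$; hence $v$ is adjacent to \emph{every} other vertex. I would first dispose of the easy direction: $W_m$ has $m+1$ vertices, one vertex of degree $m$, $m$ vertices of degree $3$ (the rim is a cycle, each rim vertex gets one spoke), girth $3$, and is planar; $M_m$ for $m$ odd has $1+(m-1)+1=m+1$ vertices — wait, the windmill $T_{m-1}$ has $2(m-1)+1$ vertices, so I should recheck degrees: $M_m$ is built so that the two ``hub'' vertices have degree $m$ each; that forces $m$ even in fact — I will need to verify the parity condition carefully against the construction, and likewise that $\mathbf W_3\cong K_5-e$ really is a $(\{3,4\};3)$-graph on $5$ vertices.

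**The core argument — the converse.** Let $G$ be a planar $(\{3,m\};3)$-cage, so $|V(G)|=m+1$ and $G$ contains a vertex $v$ of degree $m$ adjacent to all others. By Lemma~\ref{lem:vertexdegree}, applied to $G$ (order $m+1$ with a vertex of degree $m$), exactly one of three cases holds: (a) four vertices of degree $m$ and $m=3$ — excluded since we assume $m\ge 4$; (b) three vertices of degree $m$ and $m=4$ — this will yield $\mathbf W_3$; (c) at most two vertices of degree $m$. So the main work splits into two cases. In case (b), with $m=4$ we have an order-$5$ graph, three vertices of degree $4$ and two of degree $3$; since every degree-$4$ vertex is adjacent to all others, and summing degrees gives $2e=3\cdot4+2\cdot3=18$, so $e=9=\binom52-1$, i.e. $G$ is $K_5$ minus one edge, the missing edge joining the two degree-$3$ vertices: that is exactly $\mathbf W_3$.

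**Case (c): at most two vertices of degree $m$.** Write $H=G-v$; then $H$ has $m$ vertices, and every vertex of $H$ has degree $\deg_G(\cdot)-1\in\{2,m-1\}$ in $H$. There are at most one or two vertices of degree $m$ in $G$ depending on whether $v$ alone, or $v$ and one other vertex $w$, have degree $m$.

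Subcase: $v$ is the \emph{unique} vertex of degree $m$. Then $H$ is $2$-regular on $m$ vertices, i.e. a disjoint union of cycles; but $G$ is planar with $v$ adjacent to all of $H$, which forces $H$ to be drawable inside/around $v$ — more precisely $G=v\ast H$ (the cone) must be planar, and $v\ast H$ is planar iff $H$ is outerplanar, and a disjoint union of two or more cycles is not outerplanar when coned (two nested cycles around $v$ cannot both reach $v$); alternatively invoke Lemma~\ref{lem:chorizo1}/Lemma~\ref{lemma:outerplanar} on $\operatorname{link}_G(v)$. Hence $H$ is a single cycle $C_m$, and $G=W_m$. Moreover girth $3$ is automatic and $3\le m$ is fine; no parity restriction arises, matching ``$W_m$ for all $m$''.

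Subcase: exactly two vertices $v,w$ of degree $m$, so $vw\in E(G)$ and both are adjacent to all others. Let $H'=G-\{v,w\}$, on $m-1$ vertices; each vertex of $H'$ had degree $3$ in $G$, two of those edges going to $v$ and $w$, so $H'$ is $1$-regular on $m-1$ vertices — a perfect matching, forcing $m-1$ even, i.e. $m$ odd. Then $G$ is: a matching on $m-1$ vertices, plus two apex vertices $v,w$ each joined to all $m-1$ of them and to each other. Check planarity: this is precisely the double windmill $M_m$ — each matching edge $\{a,b\}$ together with $v$ and $w$ forms a $K_4$, all sharing the edge $vw$, a ``book'' of $K_4$'s which is planar. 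So $G\cong M_m$, and $m$ must be odd; this matches ``$M_m$ for $m$ odd''.

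**Main obstacle.** The delicate point is the planarity deductions in case (c): showing that coning a vertex onto $H$ (resp.\ onto $H'$ plus one more apex) is planar \emph{only} for the claimed $H$. I expect to lean on Lemma~\ref{lem:chorizo1} (the link of $v$ is outerplanar) together with Lemma~\ref{lemma:outerplanar} and the structure of $\operatorname{link}_G(v)$: in the two-apex subcase one must also confirm that no additional edges can appear among the $m-1$ vertices beyond the forced matching without violating planarity or the degree constraints — but the degree count already pins $H'$ down exactly, so the residual planarity check is just exhibiting the book embedding. The one-apex subcase's ``$H$ must be connected'' step is where I'd be most careful, and I would phrase it via: if $H$ had $\ge2$ cycle-components then $\operatorname{link}_G(v)$ would be a disjoint union of $\ge2$ cycles through no common vertex, contradicting that $G$ embeds with all of them bounding faces at $v$ (equivalently, $\mathbf{W}$-type planarity). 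Finally one reconciles the theorem's parity phrasing ($M_m$ for $m$ \emph{odd}) with the construction $M_l$ built from $T_{l-1}$; I would recheck that definition so the parity statements are mutually consistent.
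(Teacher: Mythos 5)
Your proposal follows the same route as the paper's proof: invoke Lemma~\ref{lem:vertexdegree} to bound the number of degree-$m$ vertices, then analyse $G-v$ (one hub) as a $2$-regular graph and $G-\{u,v\}$ (two hubs) as a $1$-regular graph. Your treatment of the $m=4$ case (forcing $K_5$ minus an edge, i.e.\ $\mathbf{W}_3$) and of the two-hub case (a perfect matching on $m-1$ vertices, forcing $m$ odd and $G\cong M_m$) coincides with the paper's and is fine, as is your observation that the indexing in the definition of $M_l$ via $T_{l-1}$ needs care.

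There is, however, a genuine gap in your one-hub subcase, and it is worth flagging because your attempted repair is false. You correctly note that $H=G-v$ is $2$-regular on $m$ vertices, hence a disjoint union of cycles, and that one must rule out more than one cycle. But the justification you offer --- that the cone over a disjoint union of two or more cycles is non-planar because ``two nested cycles around $v$ cannot both reach $v$'' --- does not hold: the cycles need not be nested. A disjoint union of cycles is outerplanar, and the cone over any outerplanar graph is planar. Concretely, for $m=6$ take two disjoint triangles and a seventh vertex $v$ adjacent to all six of their vertices; this graph is the union of two copies of $K_4$ glued at the cut vertex $v$, hence planar, has girth $3$, one vertex of degree $6$ and six vertices of degree $3$ --- a planar $(\{3,6\};3)$-graph on $m+1=7$ vertices that is not $W_6$. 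So the step ``$H$ is a single cycle'' fails for every $m\geq 6$, and with it the uniqueness claim in this subcase. To be fair, the paper's own proof contains the identical hole: it asserts without argument that a $2$-regular graph on $m$ vertices ``is an $m$-cycle''. Your instinct that the connectivity of $H$ is the delicate point was exactly right; the conclusion you drew from planarity was not, and as stated the uniqueness assertion of the lemma needs to be weakened (or the extra cone-over-several-cycles examples added to the list).
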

\begin{proof}
  Let $G$ be a planar \bicageT[3,m] with $m+1$ vertices. By
  Lemma~\ref{lem:vertexdegree} we may have the following cases:

  \emph{Case 1.} $G$ has three vertices of degree $4,$ $m=4$ and it
  has two vertices of degree $3.$ Hence $G$ is the biwheel,
  $\mathbf{W}_{3}$.

  \emph{Case 2.} $G$ has two vertices of of degree $m,$ say $\{u,v\},$
  and $m-1$ vertices of degree $3.$ Then $G-\{u,v \}$ is a $1$-regular
  graph. This is only possible if $G-\{u,v \}$ is a perfect matching,
  hence $m-1$ is even and $G$ is the double windmill $M_{m}$.

  \emph{Case 3.} $G$ has one vertex of degree $m,$ say $v,$ and $m$
  vertices of degree $3.$ Then $G-\{v\}$ is a $2$-regular graph with
  $m$ vertices, this is an $m$-cycle. Then $G$ is clearly the wheel
  $W_{m}.$
\end{proof}

\subsubsection{Planar \bicagesT[4,m]}

\begin{lemma}
The graph $\mathbf{W}_{m+2}$ is a planar \bicageT[4,m], with $m\geq 5$.
\end{lemma}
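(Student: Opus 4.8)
The plan is to check that $\mathbf{W}_{m+2}$ is a planar $\bigraphT[4,m]$ of order $m+2$, and then to prove the matching lower bound $\npbicageT[4,m]\ge m+2$, so that $\mathbf{W}_{m+2}$ is a $\bicageT[4,m]$. For the construction, $\mathbf{W}_{m+2}$ is the biwheel on $m+2$ vertices: a rim $m$-cycle $u_1\dots u_m$ together with two non-adjacent hubs $x,y$, each joined to every rim vertex. Then the two hubs have degree $m$, each $u_i$ has degree $2+2=4$, and since $m\ge 5>4$ the graph is genuinely biregular; it is simple and contains the triangle $xu_1u_2$, so its girth is $3$; and it is planar, since one may embed the wheel on $\{x,u_1,\dots,u_m\}$ with $x$ in a bounded region so that the rim cycle bounds the unbounded face, and then place $y$ in the unbounded face joined to $u_1,\dots,u_m$ without crossings. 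Thus $\mathbf{W}_{m+2}$ is a planar $\bigraphT[4,m]$ on $m+2$ vertices, whence $\npbicageT[4,m]\le m+2$.

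For the lower bound, by Corollary~\ref{cor:lowerbounds}(c) we have $\npbicageT[4,m]\ge\max\{m+1,m/2+4\}$, which already equals $m+2$ when $m=5$; so it remains to show that for $m\ge 6$ no planar $\bigraphT[4,m]$ has order $m+1$ (and the argument below in fact covers $m=5$ as well). Suppose $G$ were such a graph. Since $G$ has a vertex of degree $m\ge 5$ and order $m+1$, Lemma~\ref{lem:vertexdegree} applies, and its options (a) and (b) are excluded because they force $m\in\{3,4\}$; hence (c) holds, so $G$ has exactly one or exactly two vertices of degree $m$, the remaining $m$ or $m-1$ vertices having degree $4$.

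If $G$ has two vertices of degree $m$, the handshaking lemma gives $2e(G)=2m+4(m-1)=6m-4$, i.e.\ $e(G)=3m-2$; but a simple planar graph on $m+1\ge 3$ vertices has at most $3(m+1)-6=3m-3$ edges, a contradiction. If instead $G$ has a unique vertex $v$ of degree $m$, then $v$ is adjacent to all $m$ other vertices, so in $G-v$ every vertex has degree $4-1=3$; that is, $G-v$ is $3$-regular of order $m$. On the other hand, every vertex of $G-v$, being a neighbour of $v$, lies on a face of $G$ incident to $v$, so $G-v$ is an induced subgraph of $\mathrm{link}_G(v)$, which is outerplanar by Lemma~\ref{lem:chorizo1}; outerplanarity is inherited by subgraphs, so $G-v$ is outerplanar of order $m\ge 4$ with minimum degree $3\ge 2$, contradicting Lemma~\ref{lemma:outerplanar}. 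Hence no planar $\bigraphT[4,m]$ has order $m+1$, so $\npbicageT[4,m]\ge m+2$, and together with the construction this gives $\npbicageT[4,m]=m+2$, achieved by $\mathbf{W}_{m+2}$.

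I expect the main obstacle to be precisely the lower bound for $m\ge 6$: there both the Moore-type inequality of Corollary~\ref{cor:badbound} and the bound $\npbicageT[4,m]\ge m+1$ fall one short, so one must exploit the structural dichotomy of Lemma~\ref{lem:vertexdegree} together with the outerplanarity obstruction (Lemma~\ref{lem:chorizo1} feeding into Lemma~\ref{lemma:outerplanar}). By contrast, the case with two vertices of degree $m$ collapses immediately under Euler's formula, and the construction side is routine. One small point to keep in mind is that $G-v$ need not literally coincide with $\mathrm{link}_G(v)$---depending on how one reads that definition it may or may not include $v$---but $G-v$ is in any case a subgraph of it, which is all the argument uses.
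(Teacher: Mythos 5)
Your proof is correct, and its overall architecture matches the paper's: verify the construction, then rule out order $m+1$ by splitting on the number of degree-$m$ vertices via Lemma~\ref{lem:vertexdegree}, with the one-hub case killed by outerplanarity (the paper argues directly that $G-u$ is outerplanar because every vertex is visible from $u$; you route the same fact through Lemma~\ref{lem:chorizo1} plus heredity of outerplanarity, which is equivalent). The genuine difference is in the two-hub case: the paper contracts the cycle of the $2$-regular graph $G-\{u,v\}$ to a triangle and exhibits a $K_5$ minor, whereas you simply count edges, $e=3m-2>3m-3=3(m+1)-6$, and invoke the planar edge bound. Your version is shorter and avoids any discussion of minors or of whether $G-\{u,v\}$ is connected; the paper's version gives slightly more structural information (an explicit forbidden minor) but needs the observations that $uv\in E(G)$ and that both hubs see every rim vertex, which your count absorbs automatically. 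Your added remarks --- that Corollary~\ref{cor:lowerbounds} already settles $m=5$ and that the general lower bounds fall one short for $m\ge 6$ --- are accurate and correctly identify where the real work lies.
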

\begin{proof}
  It is clear that $\mathbf{W}_{m+2}$ is a planar \bigraphT[4,m] on
  $m+2$ vertices. Thus it remains to argue that $m+2$ is indeed the
  minimum number of vertices that a planar \bigraphT[4,m] can have.
  
  Suppose for a contradiction that there exists a planar \bicageT[4,m]
  $G$ that has fewer than $m+2$ vertices. Thus, $G$ has exactly $m+1$
  vertices. By Lemma~\ref{lem:vertexdegree}, since $m\geq 5,$ the
  graph $G$ must either have two vertices of degree $m$ and $m-1$
  vertices of degree $4$, or one vertex of degree $m$ and $m$ vertices
  of degree $4$. We will deal with each case separately.

  \emph{Case 1.} Suppose that $G$ has two vertices of degree $m$, say
  $u$ and $v$, and $m-1$ vertices of degree $4$, say
  $x_{1},\ldots,x_{m-1}$. Note that in this case $uv\in E(G)$ and
  $ux_{i},vx_{i}\in E(G)$ for $1\leq i\leq m-1$. Since $G-\{u,v\}$ is
  a $2$-regular graph then it must contain a cycle $C$, which is also
  a cycle in $G$. We succesively contract edges of $C$ in $G$ until we
  obtain a triangle, say $a,b,c$ in the graph $G'$. But now note that
  $G'[u,v,a,b,c]$ is isomorphic to $K_{5}$, thus $G$ contains $K_{5}$
  as a minor, which contradicts the planarity of $G$.

  \emph{Case 2.} Now assume that $G$ has exactly one vertex $u$ of
  degree $m$. Observe that $H=G-u$ is a planar $3$-regular
  graph. Furthermore, $H$ is outerplanar since in any planar drawing
  of $G$ every vertex is visible from $u$, thus all vertices of $H$
  are incident to the region of the plane that contained vertex
  $u$. Thus $H$ is a $3$-regular outerplanar graph and this
  contradicts Lemma~\ref{lemma:outerplanar}.
\end{proof}

\subsubsection{Planar \bicagesT[5,m]}\label{subsec:5_3_cages}
We start by describing the family $I_m$ of planar $(\{5,m\};3)$--graphs. In
Figure~\ref{fig:bigraphs53} we show schematics for $I_m$ for
$m=6,7,8.$

\begin{figure}[t]
  \centering
  \usetikzlibrary{decorations,decorations.markings,decorations.text,calc,arrows}

\begin{tikzpicture} [scale=1,
  _vertex/.style ={circle,draw=black, fill=black,inner sep=0.5pt},
  r_vertex/.style={circle,draw=red,  fill=red,inner sep=0.5pt},
  g_vertex/.style={circle,draw=green,fill=green,inner sep=0.5pt},
  b_vertex/.style={circle,draw=blue, fill=blue,inner sep=0.5pt},
  _edge/.style={black,line width=0.2pt},
  r_edge/.style={red,line width=0.3pt},
  g_edge/.style={green,line width=0.3pt},
  b_edge/.style={blue,line width=0.3pt},
 every edge/.style={draw=black,line width=0.3pt},
t_edge/.style={cap=round, ultra thick}]

\begin{scope}[xshift=3.5cm]
\def\rad{1.5}
\def\Rad{0.9}

\foreach \angle/\i in {90/1,135/2,180/3,225/4,270/5,315/6,0/7,45/8}{
   \coordinate (v\i) at (\angle:\Rad);

}
\coordinate (v0) at (0,0);
\foreach \angle/\i in {112.5/1,157.5/2,202.5/3,247.5/4,292.5/5,337.5/6,22.5/7,67.5/8}{
   \coordinate (u\i) at (\angle:\rad);

}
\foreach \i/\j in {8/1,1/2,2/3,3/4,4/5,5/6,6/7,7/8}{
   \draw[_edge] (u\i) -- (u\j);
\draw[_edge] (v\i) -- (v\j);
\draw[_edge] (v\i) -- (v0);
\draw[_edge] (u\i) -- (v\j);
\draw[_edge] (u\j) -- (v\j);

}
\foreach \i in {1,2,3,4,5,6,7,8}{
   \node[_vertex] (V\i) at (v\i){};
\node[_vertex] (U\i) at (u\i){};

}
\node[_vertex] (V0) at (v0){};

\end{scope}

\begin{scope}[xshift=-3.5cm]
\def\rad{1.5}
\def\Rad{0.9}

\foreach \angle/\i in {90/1,150/2,210/3,270/4,330/5,30/6}{
   \coordinate (v\i) at (\angle:\Rad);

}
\coordinate (v0) at (0,0);
\foreach \angle/\i in {120.0/1,180.0/2,240.0/3,300.0/4,0.0/5,60.0/6}{
   \coordinate (u\i) at (\angle:\rad);

}
\foreach \i/\j in {6/1,1/2,2/3,3/4,4/5,5/6}{
   \draw[_edge] (u\i) -- (u\j);
\draw[_edge] (v\i) -- (v\j);
\draw[_edge] (v\i) -- (v0);
\draw[_edge] (u\i) -- (v\j);
\draw[_edge] (u\j) -- (v\j);

}
\foreach \i in {1,2,3,4,5,6}{
   \node[_vertex] (V\i) at (v\i){};
\node[_vertex] (U\i) at (u\i){};

}
\node[_vertex] (V0) at (v0){};
\end{scope}

\begin{scope}[xshift=0cm]
\def\rad{1.5}
\def\Rad{0.9}

\foreach \angle/\i in {90.0/1,141.428571429/2,192.857142857/3,244.285714286/4,295.714285714/5,347.142857143/6,38.5714285714/7}{
   \coordinate (v\i) at (\angle:\Rad);

}
\coordinate (v0) at (0,0);
\foreach \angle/\i in {115.714285714/1,167.142857143/2,218.571428571/3,270.0/4,321.428571429/5,12.8571428571/6,64.2857142857/7}{
   \coordinate (u\i) at (\angle:\rad);

}
\foreach \i/\j in {7/1,1/2,2/3,3/4,4/5,5/6,6/7}{
   \draw[_edge] (u\i) -- (u\j);
\draw[_edge] (v\i) -- (v\j);
\draw[_edge] (v\i) -- (v0);
\draw[_edge] (u\i) -- (v\j);
\draw[_edge] (u\j) -- (v\j);

}
\foreach \i in {1,2,3,4,5,6,7}{
   \node[_vertex] (V\i) at (v\i){};
\node[_vertex] (U\i) at (u\i){};

}
\node[_vertex] (V0) at (v0){};

\end{scope}

\end{tikzpicture}
  \caption{The family of planar \bigraphsT[5,m], $I_m$. For each case, all
    vertices in the boundary are adjacent to an external vertex, to
    complete the graph.}
  \label{fig:bigraphs53}
\end{figure}
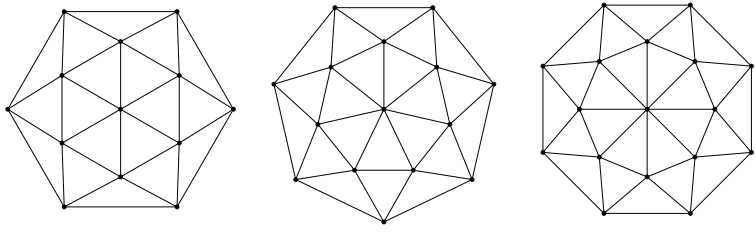

Let $I_m$ be the graph on $2m+2$ vertices,
$V(I_m)=\{x, x_0, \ldots, x_{m-1}, x', x'_0, \ldots, x'_{m-1}\}$ with
set of edges
\begin{gather*}
E(I_m)=\{x,x_i | 0\leq i \leq m-1\}  \cup \{x_i,x_{{i+1}\mod m} | 0\leq i \leq m-1\} \\
\cup \{x',x'_i | 0\leq i \leq m-1\} \cup \{x'_i,x'_{{i+1}\mod m} | 0\leq i \leq m-1\} \\
 \cup \{x_i, x'_i | 0\leq i \leq m-1\} \cup \{x_i,x'_{{i+1}\mod m} | 0\leq i \leq m-1\}.
\end{gather*} 
This construction proves that:
 
 \begin{proposition}\label{pinataprop} For all $6\leq m$, $\npbicageT[5,m]\leq 2m+2.$
\end{proposition}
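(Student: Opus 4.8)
The plan is to establish the bound by exhibiting the explicit graph $I_m$ defined just above as a planar \bigraphT[5,m] on $2m+2$ vertices; since $\npbicageT[5,m]$ is by definition the minimum order of a planar \bigraphT[5,m], the inequality $\npbicageT[5,m]\le 2m+2$ follows immediately. Thus there are exactly four things to verify about $I_m$: its order, its degree sequence, its girth, and its planarity. None of these is deep, so the write-up is mostly careful bookkeeping.

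First I would read off the order directly from the vertex set: $V(I_m)$ consists of $x$, the $m$ vertices $x_0,\dots,x_{m-1}$, $x'$, and the $m$ vertices $x'_0,\dots,x'_{m-1}$, so $|V(I_m)|=2m+2$. Next I would go through the six edge families in the definition of $E(I_m)$ and count incidences. The vertex $x$ meets only the edges $xx_i$, so $\deg(x)=m$, and symmetrically $\deg(x')=m$. A vertex $x_i$ meets $xx_i$; the two cycle edges $x_{i-1}x_i$ and $x_ix_{i+1}$ (indices mod $m$); the edge $x_ix'_i$; and the edge $x_ix'_{i+1}$ — five edges in all, so $\deg(x_i)=5$. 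Likewise $x'_i$ meets $x'x'_i$, the two cycle edges $x'_{i-1}x'_i$ and $x'_ix'_{i+1}$, the edge $x_ix'_i$, and the edge $x_{i-1}x'_i$ (this last coming from $\{x_j x'_{j+1}\}$ with $j=i-1$), again five edges, so $\deg(x'_i)=5$. One checks along the way that no two of these edges coincide — the six families are of pairwise distinct incidence types, and within a family $m\ge 6$ keeps the two $m$-cycles simple — so $I_m$ is a simple graph whose only vertex degrees are $5$ and $m$, and since $m\ge 6>5$ these are genuinely two different degrees. For the girth it is enough to produce one triangle, for instance $x_i,x_{i+1},x'_{i+1}$, using the edges $x_ix_{i+1}$, $x_{i+1}x'_{i+1}$ and $x_ix'_{i+1}$; as $I_m$ is simple it therefore has girth exactly $3$.

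The one step calling for a little thought is planarity, which I would handle by recognising the structure of $I_m$. Deleting $x$ and $x'$ leaves the graph on $x_0,\dots,x_{m-1},x'_0,\dots,x'_{m-1}$ that consists of the $m$-cycle $x_0\cdots x_{m-1}$, the $m$-cycle $x'_0\cdots x'_{m-1}$, and the zigzag edges $x_ix'_i$ and $x_ix'_{i+1}$; this is precisely the $m$-antiprism, a classical planar graph whose standard embedding has the two $m$-cycles as face boundaries (two $m$-gonal faces) and $2m$ triangular faces between them. Now reinsert $x$ into the face bounded by $x_0\cdots x_{m-1}$ and draw the $m$ edges $xx_i$: this subdivides that single face into $m$ triangles and so preserves planarity. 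Symmetrically, reinsert $x'$ into the face bounded by $x'_0\cdots x'_{m-1}$ and draw the edges $x'x'_i$ (in a drawing on the plane one sends this face to the outer region, recovering the picture of Figure~\ref{fig:bigraphs53}). The result is a planar embedding of $I_m$. So the ``hard part'' is really just keeping the index arithmetic straight in the degree count and making the identification with the antiprism precise; with that done, $I_m$ is a planar \bigraphT[5,m] on $2m+2$ vertices and the proposition follows.
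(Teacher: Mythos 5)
Your proof is correct and follows exactly the paper's route: the paper likewise establishes the bound simply by exhibiting $I_m$ as a planar \bigraphT[5,m] on $2m+2$ vertices (leaving the verification of degrees, girth, and planarity to the reader and to Figure~\ref{fig:bigraphs53}). Your write-up just makes explicit the bookkeeping and the antiprism-based planarity argument that the paper takes for granted.
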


An extensive computer search proves the non-existence of planar $(\{5,6\};3)$--graphs with $13$ vertices and planar $(\{5,7\};3)$--graphs with $14$ vertices, showing that in both of these cases $n_p(\{5,m\};3)=2m+2.$ These computations were performed using Magma \cite{MAGMA} and within it B. McKay's geng graph generator.

Additionally, consider the family of graphs depicted in
Figure~\ref{fig:betterr5g3}, $A'_m,$ each of this graphs has $10$
vertices at each end and $l=m-8$ vertices in between the ends. It
total it has $m+14$ vertices of which $m$ have degree $4$ and $14$
have degree $5.$ Let $A_m$ be the graph constructed by adding a vertex
to $A'_m$ which is connected to all vertices of degree $4.$ This
construction proves that:

\begin{proposition}\label{acordeonprop} For $m\geq 13,$ $n_p(\{5,m\},3)\leq m+15.$
\end{proposition}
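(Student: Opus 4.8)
The goal is to prove Proposition~\ref{acordeonprop}: for $m\geq 13$, $n_p(\{5,m\},3)\leq m+15$. This is an upper bound, so the plan is essentially to exhibit an explicit planar $(\{5,m\};3)$-graph on $m+15$ vertices. The paper has already set up the family $A'_m$ (described as having two "ends" of $10$ vertices each and $l = m-8$ vertices in between, totaling $m+14$ vertices, of which $m$ have degree $4$ and $14$ have degree $5$) and then defined $A_m$ by adding one vertex adjacent to all $m$ degree-$4$ vertices.

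First I would verify that $A'_m$ is well-defined and planar: this requires specifying the adjacencies at each "end cap" and in the middle "belt" of $l = m-8$ vertices, and checking that the drawing given in Figure~\ref{fig:betterr5g3} has no crossings — presumably the middle belt is a ladder-like strip (so each middle vertex has degree $4$, say two neighbors along one side and two rungs, or similar), and the two ends are small planar gadgets that close off the strip while raising $14$ boundary vertices to degree $5$ and keeping $m$ vertices at degree $4$. Then I would confirm the degree bookkeeping: $A'_m$ has $m+14$ vertices, exactly $m$ of degree $4$ and $14$ of degree $5$, and girth $3$ (there is at least one triangle, and by inspection of the gadgets and the belt no edges are repeated and the simple-graph condition holds).

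Next, form $A_m$ by adding a single new vertex $w$ joined to all $m$ vertices of degree $4$ in $A'_m$; this raises those $m$ vertices to degree $5$, gives $w$ degree $m$, leaves the $14$ vertices already of degree $5$ untouched, and produces a graph on $|V(A'_m)| + 1 = m+15$ vertices. So $A_m$ has exactly $14$ vertices of degree $5$ plus the $m$ former degree-$4$ vertices now of degree $5$ — wait, that gives $m+14$ vertices of degree $5$ and one vertex $w$ of degree $m$, which is a $(\{5,m\};3)$-graph provided $m \neq 5$ (true since $m \geq 13$) and girth remains $3$ (adding edges cannot increase girth, and a triangle already exists). The key remaining point is planarity of $A_m$: the new vertex $w$ can be placed in a face of the planar drawing of $A'_m$ from which all $m$ degree-$4$ vertices are simultaneously visible — so I must arrange the drawing of $A'_m$ so that every degree-$4$ vertex lies on the boundary of a single common face (equivalently, the subgraph of $A'_m$ induced on the degree-$4$ vertices, or rather those vertices together with a suitable chunk of edges, sits on an outer face). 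If the belt is drawn as a long strip with its degree-$4$ vertices all along one side and the two end gadgets are drawn so their degree-$4$ contributions also touch that side, then there is a face incident to all $m$ of them, and $w$ goes there.

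The main obstacle is the planarity verification of $A_m$, i.e., checking that the degree-$4$ vertices of $A'_m$ can all be placed on a common face. This is really a statement about the specific figure and is best handled by exhibiting the embedding explicitly (describing the cyclic order of edges around each vertex, or just pointing to Figure~\ref{fig:betterr5g3} and noting that in that drawing all degree-$4$ vertices lie on the outer boundary). Everything else — the vertex count $m+15$, the degree sequence ($m+14$ vertices of degree $5$ and one of degree $m$, with $m \geq 13 > 5$ so it is genuinely biregular), and girth $3$ — is a routine check once the figure's adjacencies are pinned down, and the construction is uniform in $m \geq 13$ because only the belt length $l = m-8$ varies with $m$ while the two end gadgets are fixed.
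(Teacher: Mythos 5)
Your proposal follows essentially the same route as the paper: the paper likewise defines $A_m$ by taking the $(m+14)$-vertex family $A'_m$ from Figure~\ref{fig:betterr5g3} (with its $m$ degree-$4$ vertices and $14$ degree-$5$ vertices) and adding one apex vertex adjacent to all degree-$4$ vertices, relying on the figure for the planar embedding in which those vertices share a common face. Your bookkeeping of the resulting degree sequence, order $m+15$, girth, and the planarity requirement matches the paper's (largely implicit) verification, so there is nothing further to add.
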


In addition $2m+2\geq m+15$ when $m\geq 13.$ This completes the proof  of the bounds. Finally, we remark that the families given in Proposition \ref{pinataprop} and \ref{acordeonprop} give two non-isomorphic $(\{5,13\};3)$-planar cages of order $28$.

\begin{figure}[t]
  \centering
  \includegraphics[scale=0.2]{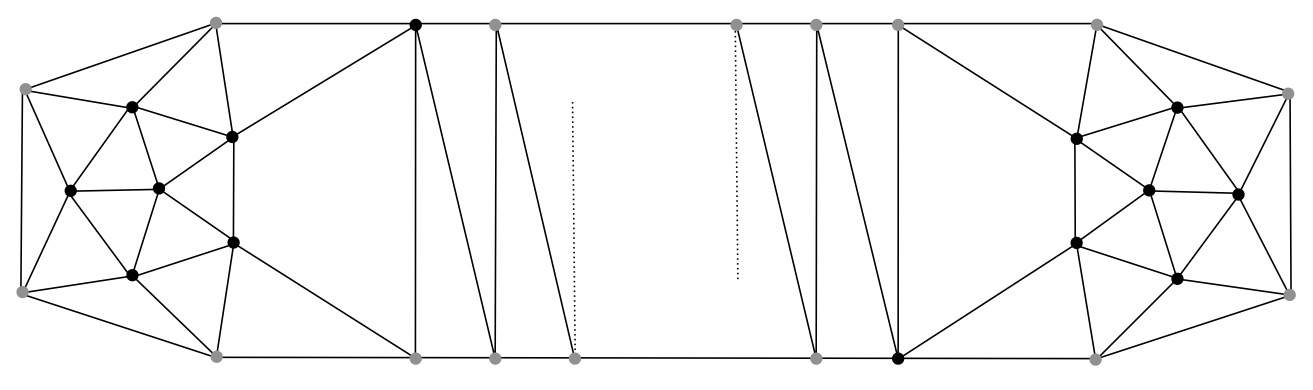} 
  \caption{A depiction of the graphs $A'_m$ with $16 \leq m$.}
  \label{fig:betterr5g3}
\end{figure}

\subsection{Girth $4$}

\begin{theorem} \label{thm:girth4}
The $(\{r,m\};4)$-cages are as follows:
{\scriptsize
\begin{center}
\begin{tabular}{lllll}
$r$  & $m$ & $n_p(\{r,m\}, 4)$ & Graphs & Full list \\ 
 \hline 
$2$ & $m$& $m+2$ & $K_{2,m}$  & yes \\ 

$3$ & $4 \leq m\leq 13$ & $ n_p(\{3,m\}, 4) = 2m+2$  & $D_m$  & - \\ 

$3$ & $14 \leq m$ & $m+\frac{4(m+1)}{5}+3 \leq n_p(\{3,m\}, 4) \leq m+4 \lceil \frac{m+1}{5}\rceil+3$  & -  & - \\ 

$3$ & $m=5k-1$ for $k\geq 3$ & $n_p(\{3,m\}, 4) =m+\frac{4(m+1)}{5}+3 $  & $Z_k$ & - \\ 
\end{tabular} 
\end{center}}
\end{theorem}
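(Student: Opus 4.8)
The statement splits into the cases $r=2$ and $r=3$ (the only possibilities for girth $4$ by Lemma~\ref{lemma:biregularcages}); in each case the lower bounds come from Euler's formula combined with the triangle-free planar edge bound $e\le 2v-4$, and the upper bounds from explicit planar families. For $r=2$: by Theorem~\ref{thm:Ch_girth4} we already have $n(\{2,m\};4)=m+2$, and $K_{2,m}$ is a planar, girth-$4$ realisation of the right degrees on $m+2$ vertices, so $n_p(\{2,m\};4)=m+2$. For the ``full list'' claim I would show $K_{2,m}$ is the unique $(\{2,m\};4)$-graph on $m+2$ vertices: letting $x$ be the number of degree-$m$ vertices, a short count from $2e=mx+2(m+2-x)$ and the triangle-free hypothesis forces $x=2$, then forces the two degree-$m$ vertices to be non-adjacent (a common neighbour of two adjacent such vertices is a triangle), and then forces every degree-$2$ vertex to be adjacent to both, i.e. $G\cong K_{2,m}$.

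For the lower bound when $r=3$, let $G$ be a planar $(\{3,m\};4)$-graph with $x$ vertices of degree $m$ and $y$ of degree $3$. As in the derivation of~\eqref{eqn:inequality}, Euler plus $e\le 2v-4$ give $3y+mx=2e\le 4(x+y)-8$, so $v=x+y\ge (m-3)x+8$; in particular $x\ge 2$ forces $v\ge 2m+2$. The substantive case is $x=1$, with $u$ the unique degree-$m$ vertex. Then $A=N(u)$ is an independent set of size $m$, and $B=V\setminus(A\cup\{u\})$ consists of degree-$3$ vertices, each vertex of $A$ sending its other two edges into $B$. Suppressing the degree-$2$ vertices of $G-u$ yields a $3$-regular planar multigraph $M$ on $B$ carrying $m$ distinguished ``$A$-edges'' of multiplicity at most $2$ (three parallel $A$-edges together with $u$ would contain $K_{3,3}$), none of which joins two $G[B]$-adjacent vertices (girth $4$). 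A discharging argument on $M$ — encoding that concentrating the $m$ $A$-edges near one vertex is ``paid for'' by faces of length $>4$ — then gives $|B|\ge\tfrac{4(m+1)}{5}+2$, hence $v\ge m+\tfrac{4(m+1)}{5}+3$. Combining the two cases and using integrality, $n_p(\{3,m\};4)\ge\min\{2m+2,\ \lceil m+\tfrac{4(m+1)}{5}+3\rceil\}$, which equals $2m+2$ for $4\le m\le 13$ (the two bounds coincide for $9\le m\le 13$) and equals $\lceil m+\tfrac{4(m+1)}{5}+3\rceil$ for $m\ge 14$.

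For the upper bounds when $r=3$, I would exhibit $D_m$: the $2m$-cycle $c_0c_1\cdots c_{2m-1}$ together with two hubs $u,v$, where $u$ is joined to all $c_{2i}$ and $v$ to all $c_{2i+1}$. Placing the cycle as a circle with $u$ inside and $v$ outside shows planarity; it has girth $4$ (triangle-freeness is immediate and $u\,c_0\,c_1\,c_2$ is a $4$-cycle), degrees $3$ and $m$, and order $2m+2$ — giving $n_p(\{3,m\};4)\le 2m+2$, hence equality for $4\le m\le 13$. For $m\ge 14$ one builds a planar graph with one vertex $u$ of degree $m$ whose $m$ neighbours are partitioned into $\lceil\frac{m+1}{5}\rceil$ consecutive blocks of at most five, each block ``capped'' by a four-vertex planar gadget of degree-$3$ vertices, plus a bounded number of additional degree-$3$ vertices to close the outer boundary; this has order $m+4\lceil\frac{m+1}{5}\rceil+3$. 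When $m=5k-1$ with $k\ge 3$ we have $\lceil\frac{m+1}{5}\rceil=k$, so this family ($Z_k$) has $m+4k+3=m+\tfrac{4(m+1)}{5}+3$ vertices and meets the lower bound, proving $n_p(\{3,m\};4)=m+\tfrac{4(m+1)}{5}+3$ there.

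The hard part is the $x=1$ lower bound with its sharp constant $\tfrac45$: the naive estimates ($3|B|\ge 2m$ from handshaking, the bipartite-planar bound on the $A$--$B$ edges, and the $K_{3,3}$-free multiplicity-$\le 2$ bound on $M$) only yield $|B|\gtrsim 2m/3$, well short of the needed $|B|\ge 4m/5$, so one really needs a discharging argument — or an equivalent analysis of the faces of $G$ incident to $u$ via Lemmas~\ref{lem:chorizo1}, \ref{lem:chorizo2} and~\ref{lemma:outerplanar} — that quantifies how the single high-degree vertex forces many long faces. A secondary obstacle is pinning down the $m\ge 14$ construction for each residue of $m+1$ modulo $5$ so that the exponent $\lceil\frac{m+1}{5}\rceil$ is exactly achieved, and verifying the planarity and girth of the capping gadget.
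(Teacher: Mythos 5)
Your treatment of $r=2$, of the case of at least two degree-$m$ vertices (via $3y+mx=2e\le 4v-8$, which is a cleaner, more explicit version of the paper's Lemma~\ref{lemma:Badbound}), and of the upper-bound constructions $D_m$ and $Z_k$ is sound and matches the paper in substance. The gap is exactly where you locate it yourself: the lower bound $v\ge m+\frac{4(m+1)}{5}+3$ when there is a unique vertex $u$ of degree $m$. You reduce to a $3$-regular planar multigraph $M$ on $B$ carrying $m$ distinguished edges and then assert that ``a discharging argument \ldots then gives $|B|\ge \frac{4(m+1)}{5}+2$,'' but no such argument is supplied, and you verify that the naive counts only give $|B|\gtrsim 2m/3$. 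Since this bound is the entire content of the $m\ge 14$ row and of the equality $n_p(\{3,5k-1\};4)=m+\frac{4(m+1)}{5}+3$, the proof is incomplete as it stands; your derivation of the $4\le m\le 13$ row as $\min\{2m+2,\lceil \frac{9m+19}{5}\rceil\}$ also leans on this unproved bound for $10\le m\le 13$, whereas the paper obtains $2m+2$ there directly.

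For comparison, here is how the paper makes the constant $\frac45$ appear. It analyses $link_G(u)$, the subgraph induced by the faces at $u$, decomposed into maximal cycles $C_1,\dots,C_k$ and trees $P_1,\dots,P_{k'}$ (Lemma~\ref{lem:chorizo1}), whose intersection graph $I_u$ is a forest (Lemma~\ref{lem:chorizo2}). Girth $4$ forbids consecutive vertices of $link_G(u)$ from both being neighbours of $u$, so each $C_i$ has at least $2m_i$ vertices and each $P_j$ at least $2m_j+1$, where $m_i,m_j$ count neighbours of $u$; summing and subtracting the $k+k'-c$ intersection vertices gives $v(link_G(u))\ge 2m-k+c$, and each leaf of $I_u$ must be a cycle enclosing an extra vertex of $G$, yielding $v(G)\ge 2m-k+c+ends(I_u)+1$ (Lemma~\ref{lem:linklowbound}). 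Finally, a cycle that actually ``saves'' a vertex needs $m_i\ge 4$ and consecutive cycles are separated by trees each contributing at least one further neighbour of $u$, so $m\ge 4k+(k-1)$, i.e.\ $k\le\frac{m+1}{5}$; substituting gives $v\ge\frac{9m+19}{5}=m+\frac{4(m+1)}{5}+3$ for $m\ge 14$ and $v\ge 2m+2$ for $m\le 13$. Any discharging you design on $M$ would have to encode precisely this ``every five neighbours of $u$ cost at least four vertices of $B$'' phenomenon; the link/intersection-forest route is the concrete mechanism the paper uses, so you should either import Lemmas~\ref{lem:chorizo1}--\ref{lemma:outerplanar} or write out the discharging rules and their verification in full.
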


The statement of the theorem is a summary of the results presented in
the coming subsections. We may begin by remarking that
Lemma~\ref{lemma:biregularcages} implies that $2\leq r \leq 3, r <m$
and Corollary~\ref{cor:lowerbounds} points that
$m+2 \leq n_p(\{2,m\},4)$ and $m+5 \leq n_p(\{3,m\},4).$

\subsubsection{Planar \bicages[2,m]{4}}

\begin{lemma}
  The complete bipartite graph $K_{2,m}$ is the $(\{2,m\},4)$-planar
  cage.
\end{lemma}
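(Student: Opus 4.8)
The plan is to verify that $K_{2,m}$ is a $(\{2,m\};4)$-graph that is planar, and then show no $(\{2,m\};4)$-planar graph can have fewer than $m+2$ vertices, matching the lower bound $n_p(\{2,m\};4)\geq m+2$ from Corollary~\ref{cor:lowerbounds}(e). For the first part, $K_{2,m}$ clearly has two vertices of degree $m$ and $m$ vertices of degree $2$, it is bipartite hence triangle-free, and it genuinely contains $4$-cycles (for $m\geq 2$), so its girth is exactly $4$; planarity follows since $K_{2,m}$ is a ``book'' of $m$ quadrilaterals glued along a common edge joining the two degree-$m$ vertices (equivalently, it has no $K_5$ or $K_{3,3}$ minor). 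Since $|V(K_{2,m})| = m+2$ equals the lower bound, it only remains to rule out a planar $(\{2,m\};4)$-graph on $m+1$ vertices.

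So suppose $G$ is a planar $(\{2,m\};4)$-graph with exactly $m+1$ vertices. Since $G$ has a vertex of degree $m$ and order $m+1$, I would invoke Lemma~\ref{lem:vertexdegree}: $G$ falls into case (a) ($m=3$, four vertices of degree $3$), case (b) ($m=4$, three vertices of degree $4$), or case (c) (at most two vertices of degree $m$). In case (c), writing $x$ for the number of degree-$m$ vertices and $y=m+1-x$ for the number of degree-$2$ vertices, the only options are $x=1$ or $x=2$. If $x=1$, then $G$ minus the degree-$m$ vertex $v$ is a $1$-regular graph on $m$ vertices (a perfect matching), so every vertex of $G-v$ has degree $1$ there and degree $2$ in $G$, forcing it to be adjacent to $v$; but then each matching edge together with $v$ forms a triangle, contradicting girth $4$. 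If $x=2$, say with degree-$m$ vertices $u,v$, then $G-\{u,v\}$ is $0$-regular on $m-1$ vertices, an independent set each of whose vertices must send both its edges to $\{u,v\}$; since there are only $m-1<m$ such vertices and $\deg(u)=\deg(v)=m$, the vertices $u,v$ must be adjacent to each other and each vertex of the independent set is adjacent to both $u$ and $v$ — this is precisely $K_{2,m-1}$ plus the edge $uv$, which contains a triangle (through $u$, $v$, and any middle vertex), again contradicting girth $4$.

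It remains to dispose of cases (a) and (b) of Lemma~\ref{lem:vertexdegree}. In case (a), $m=3$, but then $r=2<m=3$ and $G$ would have four degree-$3$ vertices and one degree-$2$ vertex on $4$ vertices total — a contradiction, since $4$ vertices cannot all have degree $3$ while also containing a fifth vertex; more directly, a graph on $m+1=4$ vertices with four vertices of degree $3$ is $K_4$, which has girth $3$. In case (b), $m=4$: three vertices of degree $4$ among only $m+1=5$ vertices means each of these three is adjacent to all four others, and the remaining two vertices have degree $2$; the three mutually-adjacent degree-$4$ vertices already form a triangle, contradicting girth $4$. Thus no planar $(\{2,m\};4)$-graph on $m+1$ vertices exists, so $n_p(\{2,m\};4)=m+2$ and $K_{2,m}$ realizes it. The main obstacle is organizational rather than deep: making sure Lemma~\ref{lem:vertexdegree}'s three cases are each shown to force a triangle or a numerical impossibility; the girth-$4$ constraint is what kills every candidate, so the key repeated observation is that a degree-$m$ vertex adjacent to an edge of $G$ yields a triangle.
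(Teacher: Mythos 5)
Your argument correctly establishes that $K_{2,m}$ is a planar $(\{2,m\};4)$-graph on $m+2$ vertices and that no such graph on $m+1$ vertices exists, so $n_p(\{2,m\};4)=m+2$. Your route differs from the paper's: the paper simply cites Corollary~\ref{cor:lowerbounds}(e) (ultimately the Moore-type bound of Theorem~\ref{thm:Dgcages}) for the inequality $n_p(\{2,m\};4)\geq m+2$, whereas you reprove that lower bound from scratch by running a graph on $m+1$ vertices through the case analysis of Lemma~\ref{lem:vertexdegree} and showing each case forces a triangle or a degree-sequence impossibility. Your version is more self-contained and makes the mechanism (every small candidate is killed by girth $4$) explicit; the paper's is shorter because the heavy lifting is already packaged in the cited corollary. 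All of your individual case arguments check out, including the observation that in the one-degree-$m$-vertex case the matching edges of $G-v$ close into triangles through $v$.

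The one thing you omit that the paper does address is uniqueness: the lemma asserts $K_{2,m}$ is \emph{the} planar \bicage[2,m]{4}, and Theorem~\ref{thm:girth4} records this case as a full list. After establishing $n_p=m+2$ you still need to argue that every planar $(\{2,m\};4)$-graph on $m+2$ vertices is isomorphic to $K_{2,m}$. The paper does this by showing such a graph cannot have exactly one vertex of degree $m$ (removing it would leave $m$ vertices of degree $1$ and one of degree $2$ with no two neighbours of the removed vertex adjacent, which is impossible), hence it has two vertices of degree $m$ and $m$ vertices of degree $2$ with no triangles, which forces $K_{2,m}$. Your $m+1$-vertex case analysis does not substitute for this step, so you should append an argument of this kind to close the characterization.
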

\begin{proof}
  Corollary~\ref{cor:lowerbounds} proofs that
  $n_p(\{2,m\},4) \geq m+2.$ Therefore, we require at least $m+2$
  vertices and this bound is achieved by the complete bipartite graph
  $K_{2,m}$, which is clearly planar.

To prove uniqueness, it suffices to prove that if $G$ is a
$(\{2,m\},4)$-planar cage then it must have two vertices of degree
$m.$ Assume, to the contrary, that it only has one vertex of degree
$m,$ $v.$ Then $G \setminus \{v\}$ would be a graph with $m$ vertices
of degree $1$ and one vertex of degree $2,$ where two vertices
adjacent to $v$ in $G,$ cannot be adjacent, or there would be a
triangle. It's obvious that such a graph doesn't exist. Thus $G$ has
at least two vertices of degree $m$, $m$ vertices of degree $2,$
and it has no triangles, which characterizes $K_{2,m}.$
\end{proof}

\subsubsection{Planar \bicages[3,m]{4}}
In this section we will first show the lower bound for all cases and
then we will introduce families of graphs for which the lower
bounds are attained for some values of $m$.

\begin{lemma}\label{lem:linklowbound}
  Let $G$ be a $(\{3, m\}, 4)$-planar graph, $x$ be the unique vertex of degree $m$ in $G$, $k$ be the number of cycles in the decomposition of
  $link_G(x)= (\cup_{i=1}^{k}C_i ) \cup (\cup_{j=1}^{k'} P_j ),$ $c$
  the number of connected components of $link_G(x)$ and $ends(I_x)$ be
  the number of ends of $I_x,$ then $v(G) \geq 2m-k+c+ ends(I_x)+1.$

\end{lemma}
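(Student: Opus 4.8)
The goal is a lower bound on $v(G)$ in terms of the structure of $link_G(x)$, where $x$ is the unique degree-$m$ vertex. The plan is to count vertices by partitioning $V(G)$ into three parts: the vertex $x$ itself, the vertices lying on faces incident to $x$ (which is exactly $V(link_G(x))$), and whatever vertices remain. So I would write $v(G) = 1 + |V(link_G(x))| + (\text{rest})$ and bound each piece from below. The first term is trivially $1$. For the middle term, $link_G(x)$ contains the $m$ neighbours of $x$ arranged around $x$; since $G$ has girth $4$, no two consecutive neighbours of $x$ can be adjacent, so each face incident to $x$ has length $\ge 4$, meaning each such face contributes at least one vertex of $link_G(x)$ that is \emph{not} a neighbour of $x$. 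There are $m$ faces around $x$, so naively $|V(link_G(x))| \ge m + m = 2m$. The subtlety — and the source of the corrections $-k$, $+c$, $+ends(I_x)$ — is that these "extra" vertices on different faces can coincide, so I cannot simply add $m$ to $m$.

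**The heart of the argument: counting the non-neighbour vertices of $link_G(x)$.**

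Here I would use the decomposition $link_G(x) = (\cup_{i=1}^k C_i) \cup (\cup_{j=1}^{k'} P_j)$ and the fact (Lemma~\ref{lem:chorizo2}) that $I_x$ is a forest. Walk around $x$: the $m$ faces around $x$, in cyclic order, trace out $link_G(x)$, and each face contributes one edge (or short path) of the link lying "opposite" $x$. The $m$ neighbours of $x$ each appear on exactly two consecutive faces. I want to count $N := |V(link_G(x))| - m$, the number of link-vertices that are \emph{not} neighbours of $x$. Each of the $m$ faces incident to $x$ forces at least one non-neighbour vertex; a non-neighbour vertex can be shared by several faces, but only if those faces' "opposite arcs" meet at it, which corresponds precisely to an intersection point recorded in $I_x$. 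I would argue that if the link had no sharing at all, $N = m$; each time two arcs are merged at a shared non-neighbour vertex we lose one from the count, and the total amount of such merging is controlled by $I_x$: a forest on $k + k'$ nodes with $c$ components has $k + k' - c$ edges, and one shows the deficit is exactly captured so that $N \ge m - k + c + ends(I_x)$ — the $ends(I_x)$ term arising because a path-type component $P_j$ or a leaf $c_i$ of $I_x$ has a "free end" of its arc that is forced to terminate in a genuinely new non-neighbour vertex rather than cycling back. (The cleanest way to make this precise is to root the counting on a traversal of the boundary walk of the region formerly occupied by $x$ and bound the number of distinct vertices using the forest structure of $I_x$.) This gives $|V(link_G(x))| \ge m + (m - k + c + ends(I_x)) = 2m - k + c + ends(I_x)$, hence $v(G) \ge 1 + 2m - k + c + ends(I_x)$.

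**The expected obstacle.**

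The main difficulty is the bookkeeping in the previous paragraph: making rigorous exactly how $k$, $c$, and $ends(I_x)$ enter, i.e. proving that the number of distinct non-neighbour vertices on the faces around $x$ is at least $m - k + c + ends(I_x)$. This requires a careful analysis of how the cycles $C_i$ and trees $P_j$ of $link_G(x)$ glue together along the cyclic sequence of $m$ faces, using that $G$ is simple and has girth $4$ (so within a single face an arc opposite $x$ cannot degenerate) and that $I_x$ is a forest (Lemma~\ref{lem:chorizo2}, so no "cyclic" gluing pattern among the components can occur, which is what would otherwise destroy the bound). I would handle this by induction on $k + k'$ (the number of components of $link_G(x)$): remove a leaf of the forest $I_x$, which corresponds to peeling off a cycle $C_i$ or path $P_j$ that meets the rest of $link_G(x)$ in at most one vertex, check that removing it drops $v(G)$'s contribution by exactly the right amount (adjusting $k$, $c$, $ends(I_x)$ accordingly), and invoke the inductive hypothesis on the smaller configuration. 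Everything else — Euler's formula, the girth-$4$ consequence that faces around $x$ have length $\ge 4$, and the uniqueness of the degree-$m$ vertex (which follows from Lemma~\ref{lem:vertexdegree} since $m \ge 5$ for a $(\{3,m\};4)$-graph) — is routine.
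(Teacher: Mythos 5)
Your overall decomposition of $V(G)$ into $x$, the link, and the rest is the same as the paper's, and your count of the link itself is essentially the paper's: componentwise, each cycle $C_i$ contributes at least $2m_i$ vertices and each tree $P_j$ at least $2m_j+1$ (two consecutive link vertices cannot both be neighbours of $x$ by girth $4$), and the forest structure of $I_x$ caps the number of pairwise-shared vertices at $k+k'-c$, giving $v(link_G(x))\geq 2m-k+c$. The genuine gap is in where you locate the $ends(I_x)$ surplus. You claim the number $N$ of non-neighbour link vertices satisfies $N\geq m-k+c+ends(I_x)$, i.e.\ you try to extract the $ends(I_x)$ term from $link_G(x)$ itself. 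That inequality is false: take the cage $D_m$ with $x$ its centre vertex, so that $link_G(x)$ is a single $2m$-cycle alternating neighbours and non-neighbours of $x$. There $N=m$ while $k=c=ends(I_x)=1$, so your claimed bound would force $N\geq m+1$. No bookkeeping or induction on leaves of $I_x$ can repair a false intermediate statement.

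In the paper the $ends(I_x)$ term counts vertices of $G$ \emph{outside} $link_G(x)$ (together with $+1$ for $x$), and establishing this is the real content of the proof, which your proposal omits entirely. The argument is: a leaf of $I_x$ must be a cycle-type component $C_i$ (a tree-type leaf would have a terminal vertex of degree $2$ in $G$, impossible since all degrees are $3$ or $m$); the $m_i$ vertices of $C_i$ not adjacent to $x$ each need a third edge; girth $4$ forces any chords among them to join vertices at distance at least $3$ on $C_i$, and a planarity analysis then shows that if no further vertex of $G$ lies in the disk bounded by $C_i$, at least two vertices of $C_i$ must attach to other components of the link, so $C_i$ cannot be a leaf of $I_x$. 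Hence each end of $I_x$ forces one vertex of $G\setminus link_G(x)$ inside the corresponding disk (as $y_1$ does inside the $2m$-cycle in $D_m$), yielding $v(G\setminus link_G(x))\geq ends(I_x)+1$ and the stated bound. (A side remark: uniqueness of the degree-$m$ vertex is a hypothesis of the lemma, not something to be derived from Lemma~\ref{lem:vertexdegree}, which only applies to graphs of order $m+1$.)
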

\begin{proof}

  Notice that in $link_G(x)$ two consecutive vertices can't both be
  neighbours of $x$, this implies that each of the cycles and trees in
  the decomposition
  $link_G(x)= (\cup_{i=1}^{k}C_i ) \cup (\cup_{j=1}^{k'} P_j),$ inherits this property. Hence,
  for each $C_i$ and $P_j$ we have that $v(C_i)\geq 2m_i$ and
  $v(P_i)\geq 2m_j+1$, respectively, where $m_i, m_j$ represent the
  number of vertices adjacent to $x$. Here
  $\sum_{i=1}^{k}m_i +\sum_{j=1}^{k'} m_j=m$

By Lemma~\ref{lem:chorizo2} we know that $I_x$ is a forest, this
implies that the number of vertices belonging to the pairwise
intersections $C_i \cap P_j$ or $C_i \cap C_j$ is exactly $k+k'-c.$ Thus,
we have:
$v(link_G(x))\geq \sum_{i=1}^{k} v(C_i) +\sum_{j=1}^{k'}
v(P_j)-(k+k'-c)=2m+k'-(k+k'-c)=2m-k+c.$

As $v(G)= v(link_G(x))+ v(G \setminus link_G(x)),$ we now look at how
the structure of $I_x$  helps bound
$v(G\setminus link_G(x)).$

\emph{Claim 1. The end vertices of $I_x$ are vertices that represent a
  cycle-type component.}

Else, there would be terminal vertices in $link_G(x)$, which would be
vertices of degree $2$ in $G.$

\emph{Claim 2. In $I_x$ edges only exist among pairs of vertices
  representing one tree and one cycle.}

If two components representing cycles in the decomposition of
$link_{\color{blue}G}(x)$ intersect, the intersecting vertex would have degree four, but the vertices in $link_G(x)$ have degree $3$ or $2$. Recall that, by Lemma \ref{lem:chorizo2},  cycles can't intersect in more than one vertex.

\begin{figure} [t]
\begin{center}
\includegraphics[scale=0.2]{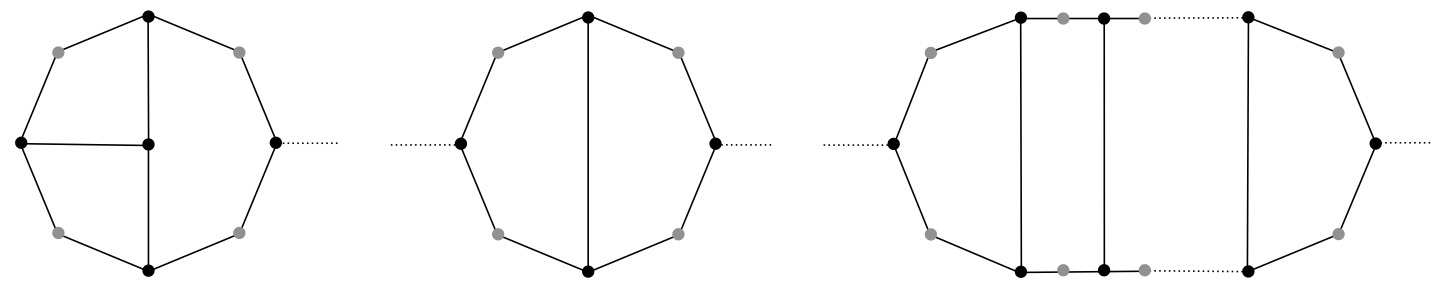} 
\caption{The graphs $F, E_4$ and a depiction of  $E_l$ for $l\geq 8.$ }
\label{fig:g4pieces}
\end{center}
\end{figure}

\emph{Claim 3. $v(G \setminus link_G(x))\geq ends(I_x)+1.$}

Notice that for each $C_i$ we have at least $m_i$ vertices of degree
$2$ which are not adjacent to $x.$ This implies that in the embedding
of the graph there must be some edges emanating from said vertices,
either reaching some other vertices in $C_i,$ vertices in the components of
$link_G(x)$ adjacent to $C_i$ or some vertices not in $C_i$ (lying in
the area enclosed by $C_i$ in the embedding).

Suppose there are some chords (non-crossing in the embedding) among
the vertices of $C_i$ non adjacent to $x.$ As, there are no triangles
in $G$, these vertices have to be at distance at least three in the
cycle. So, there would have to be at least two vertices in $C_i$ not
adjacent to $x$ and not adjacent to any other vertex in $C_i.$ Thus, if there
are no additional vertices of $G$ in the area enclosed by $C_i,$ then
$C_i$ has exactly $2m_i$ vertices, $m_i-2$ which are paired by non-crossing chords and two which have to be connected to other parts of $link_G(x).$ (See the
graphs in the center and right side in Figure~\ref{fig:g4pieces}.)

Thus, for $C_i$ to represent an end vertex of $I_x$ there is no remedy
but to have one additional vertex of $G$ in the area enclosed by
$C_i,$ connected to three vertices of $C_i$ non adjacent to $x$ and
exactly one vertex of $C_i$ non adjacent to $x$ which, connects to
another part of $link_G(x).$ (See the leftmost graph in
Figure~\ref{fig:g4pieces}.) The result follows from this claim.
\end{proof}

\begin{corollary}
  Let $G$ be a $(\{3, m\}, 4)$-planar graph then for
  $4 \leq m\leq 13,$ $2m + 2 \leq v(G)$ and for $14 \leq m,$
  $\frac{9m+19}{5} \leq v(G).$
\end{corollary}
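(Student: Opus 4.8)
The plan is to extract the two numerical bounds directly from Lemma~\ref{lem:linklowbound}, which says that for a $(\{3,m\},4)$-planar graph $G$ with unique vertex $x$ of degree $m$ we have $v(G)\geq 2m-k+c+ends(I_x)+1$, where $k$ is the number of cycle-type components in the decomposition of $link_G(x)$, $c$ is the number of connected components, and $ends(I_x)$ is the number of ends (leaves) of the forest $I_x$. So I must show that the quantity $-k+c+ends(I_x)$ is bounded below well enough to yield the claimed inequalities. First I would deal with the easy regime: since each cycle-type component contains at least $2m_i\geq 2$ vertices adjacent to $x$ (in fact, in a girth-$4$ graph two consecutive vertices of a cycle cannot both be neighbours of $x$, so $m_i\geq 1$ forces $v(C_i)\geq 4$), and each path-type component contains at least one neighbour of $x$, the trivial bound is $m\geq \sum m_i + \sum m_j \geq k + k'$; combined with $c\le k+k'$ one reads off crude estimates. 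The base case to keep in mind is that if $link_G(x)$ is a single cycle ($k=1$, $k'=0$, $c=1$, $ends(I_x)=0$ since $I_x$ is a single isolated vertex — but then by Claim~3 in the lemma one still gets an extra vertex), giving $v(G)\geq 2m+1+1=2m+2$ whenever the structure is minimal. I would then argue that for $m$ not too large the configuration forcing $k$ large is impossible, so $2m+2\le v(G)$ holds throughout $4\le m\le 13$.

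For the second bound, the mechanism is the tension between making $k$ large (which decreases the bound via $-k$) and the fact that a forest $I_x$ with $k+k'$ vertices and $c$ components has $ends(I_x)\ge$ something growing with $k+k'$ unless $I_x$ is very path-like. The key inequality I want is of the form: if a forest has $N$ vertices and $c$ components, then the number of leaves plus $c$ is at least roughly $N/2$ in the worst case, but here I need it tied to the cycle-vertices only. The cleaner route: by Claim~2 of the lemma, $I_x$ is bipartite with the two sides being cycle-vertices and path-vertices, and by Claim~1 every leaf of $I_x$ is a cycle-vertex. A cycle-vertex that is \emph{not} a leaf has degree $\ge 2$ in $I_x$, hence is adjacent to $\ge 2$ path-vertices; counting edges of the forest (at most $k+k'-c$ of them) against the path-vertex side gives that the number of non-leaf cycle-vertices is at most $k'-c$ roughly, so $ends(I_x)\ge k-(k'-c)$ or similar. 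Substituting into $v(G)\ge 2m-k+c+ends(I_x)+1$ cancels the $-k$ and leaves $v(G)\ge 2m + c + (\text{something})$. The worst case, where the bound $\frac{9m+19}{5}$ is sharp, should correspond to the extremal family: each $C_i$ is a hexagon ($m_i=3$, $v(C_i)=6$) carrying one extra internal vertex, chained together, so that each "piece" contributes $3$ to the $m$-count and $7$ vertices in total plus shared vertices — giving asymptotically $\frac{7}{3}m$... I would reconcile the exact constant $\frac{9}{5}$ by tracking the sharing precisely: the extremal pieces $E_l$ of Figure~\ref{fig:g4pieces} with $l$ chosen optimally force the ratio $9/5$.

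The main obstacle will be the careful bookkeeping in the second case: getting the exact coefficient $\frac{9}{5}$ and the exact additive constant $\frac{19}{5}$ requires optimizing over the sizes $m_i$ of the cycle components and the number of internal vertices forced by Claim~3, rather than just using the asymptotically weaker "$\ge 2m_i$" bound. Concretely, I expect to write $v(G)\ge \sum_i v(C_i) + \sum_j v(P_j) - (k+k'-c) + (\text{internal vertices})$ and then minimize $\bigl(\sum v(C_i)+\sum v(P_j) + \#\text{internal}\bigr)/m$ subject to the constraints $\sum m_i+\sum m_j=m$, $v(C_i)\ge 2m_i$, and the Claim~3 penalty that any $C_i$ which is an end of $I_x$ needs an extra vertex while still spending $2m_i$ on the cycle; a hexagon ($m_i=3$) with one internal vertex is the efficient unit, and bounding how many such units and how many connecting path-components are needed yields the $\frac{9}{5}$ slope. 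The $4\le m\le 13$ range is handled separately precisely because there the discrete constraints do not yet allow enough hexagonal units to beat $2m+2$, so one checks that $-k+c+ends(I_x)+1\ge 2$ there by a short case analysis on the few possible decompositions.
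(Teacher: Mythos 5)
Your high-level framework is the same as the paper's: reduce to one vertex $x$ of degree $m$, invoke $v(G)\geq 2m-k+c+ends(I_x)+1$, and then control the trade-off between making $k$ large and keeping $ends(I_x)$ small. But the proposal is missing the one quantitative fact that makes both claims work, namely that every cycle component $C_i$ of $link_G(x)$ must satisfy $m_i\geq 4$ (so $v(C_i)\geq 8$, and an end component needs a ninth, internal, vertex). This comes from the chord analysis in the proof of Lemma~\ref{lem:linklowbound}: the $m_i$ vertices of $C_i$ not adjacent to $x$ have degree $3$ in $G$ but only two cycle edges, so they need third edges; chords among them must join vertices at distance at least $3$ on the cycle to avoid triangles, which pairs off $m_i-2$ of them and leaves exactly two to attach to the rest of $link_G(x)$ (or one plus an internal vertex, for an end). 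Without $m_i\geq 4$ you cannot conclude that $k\geq 3$ forces $m\geq 4\cdot 3+2=14$ (which is the entire content of the first claim for $4\leq m\leq 13$), nor that $k\leq\frac{m+1}{5}$ from $m\geq 4k+(k-1)$, which is exactly what produces $v\geq 2m-\frac{m+1}{5}+4=\frac{9m+19}{5}$.

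Your proposed extremal unit — a hexagon with $m_i=3$ plus one internal vertex — is in fact impossible: the three non-neighbours of $x$ on a hexagon are pairwise at distance $2$, so any chord among them creates a triangle, and sending all three to a single internal vertex leaves $C_i$ with no vertex available to attach to the rest of $link_G(x)$. Your own arithmetic ($7/3>2>9/5$) already signals that this unit cannot be the minimizer, and you leave the reconciliation with $9/5$ unresolved. The leaf-counting inequality $ends(I_x)\geq k-k'+c$ for the forest $I_x$ is true but, after substitution, merely reproduces $v\geq 2m-k+4$ in the path-like case; it does not remove the need to bound $k$ in terms of $m$. So the plan as written cannot reach either the threshold $m=14$ or the constants $\frac{9}{5}$ and $\frac{19}{5}$; the missing step is precisely the paper's observation that the cheapest admissible cycle piece is an $8$-cycle with $m_i=4$ joined to its neighbours in $I_x$ by length-two paths each contributing one further neighbour of $x$.
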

\begin{proof}
As we have proved that $v(G)$ increases as the number of vertices of degree $m$ increases, then we will assume for this lower bound that $G$ has exactly one vertex of degree $m$, $x$.
  Let $G$ be a $(\{3, m\}, 4)$-planar graph and $x$ be a vertex of
  degree $m$ in $G$. From the equation $v(G) \geq 2m-k+c+ ends(I_x)+1$
  in the previous lemma, we can observe that, when $G$ is the smallest
  possible $(\{3,m\}, 4)$-planar graph, then $G$ is connected,
  has big $k$ and small $ends(I_x).$

As $ends(I_x)\geq 1$ we have that $v(G) \geq 2m-k+ ends(I_x)+ 2.$ Obviously this bound will decrease as $k$ increases.

For $I_x$ to have few ends the optimal case is when it is a path where
each $P_j$ represents a path of length two or one. Thus, we observe
that, for the extremally small $(\{3, m\}, 4)$-planar graphs,
$ends(I_x)=1 \text{ or } 2.$

\emph{Claim 1.  For $4 \leq m\leq 13,$ $2m + 2 \leq v(G)$. }

From the proof of the Lemma~\ref{lem:linklowbound} we can
observe that the smallest $C_i$ that can represent an end vertex of
$I_x$ has $9$ vertices and $m_i=4$ and the smallest $C_i$ that can
represent a non-end of $I_x$ has $8$ vertices and $m_i=4.$ (See
Figure~\ref{fig:g4pieces}). That is, if  $m$ is small enough we have no alternative but to have $k=1, ends(I_x)=1,$ and $v(G) \geq 2m+ 2,$ this also holds if $k=2, ends(I_x)=2,$ for example.

On the other hand, for  $2m-k+ ends(I_x)+ 2 < 2m+1$ to be satisfied we need $k\geq 3.$ As each $C_i$ is followed by $P_j$ then, for a given $m,$ the best possible situation is to have as many $P_j$ of length $2$ between
each $C_i$ of size $8.$ This implies that for $k\geq 3$ we must be
able to have at least three cycles of size $8$ and two paths of length
two in the decomposition of $link_G(x)$. Thus, the total degree of $x$
is $m\geq 4(3)+2=14.$ This implies the claim.

\emph{Claim 2. For $14 \leq m,$ $\frac{9m+19}{5} \leq v(G).$}

Here, by Claim 1, we may have $k\geq 3$ and $ends(I_x)=2.$ Thus, we
need to find the greatest $k$ such that $m \geq 4k+(k-1)= 5k-1$ or,
equivalently, the greatest $k \leq \frac{m+1}{5}$ and the result
follows by plugging in this values into the bound's equation.
\end{proof}



Let $D_m$ be the graph whose set of vertices is
$\{x_0, \ldots x_{2m-1}, y_0,y_1\}$ and whose set of edges is
$\{x_i x_{i+1} | i=0,\ldots, 2m-2\} \cup \{x_1, x_{2m-1}\} \cup \{y_0
x_{2j} | j=0,\ldots, m\} \cup \{y_1 x_{2j-1} | j=0,\ldots, m\}.$ See
Figure~\ref{fig:subdivided_cube}. This graph is clearly a
$(\{3, m\}, 4)$-planar graph with $2m+2$ vertices, implying;

\begin{proposition}
For $4\leq m \leq 13,$ $n_p(\{3, m\}, 4)= 2m+2$.
\end{proposition}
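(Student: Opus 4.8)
The plan is to make the lower and upper bounds meet at $2m+2$: the lower bound is already in hand from the material right before the statement, and the upper bound comes from checking that $D_m$ does what is claimed.

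For the lower bound I would simply invoke the corollary immediately preceding this proposition (the one obtained from Lemma~\ref{lem:linklowbound}), which already asserts $v(G)\ge 2m+2$ for every $(\{3,m\},4)$-planar graph $G$ whenever $4\le m\le 13$. No further work is needed there, so it remains only to prove $n_p(\{3,m\},4)\le 2m+2$, i.e.\ that $D_m$ is genuinely a $(\{3,m\},4)$-planar graph on $2m+2$ vertices.

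For that I would verify three things. \emph{Order and degrees:} $D_m$ has the $2m$ vertices $x_0,\dots,x_{2m-1}$ together with $y_0,y_1$, hence $2m+2$ vertices; the edges among the $x_i$ form the $2m$-cycle $x_0x_1\cdots x_{2m-1}x_0$, contributing two neighbours to each $x_i$, and in addition each $x_i$ is joined to $y_0$ if $i$ is even and to $y_1$ if $i$ is odd, so $\deg(x_i)=3$; meanwhile $y_0$ is adjacent to the $m$ even-indexed $x_i$ and $y_1$ to the $m$ odd-indexed $x_i$, so $\deg(y_0)=\deg(y_1)=m$. \emph{Planarity:} draw the $2m$-cycle as a convex polygon, put $y_0$ in its interior joined to the even-indexed vertices and $y_1$ in the outer face joined to the odd-indexed vertices; since the even (respectively odd) vertices occur in this cyclic order around the polygon, neither star of edges needs a crossing, exactly the picture of Figure~\ref{fig:subdivided_cube}. \emph{Girth:} $D_m$ has no triangle, since the only edges among the $x_i$ join cyclically consecutive vertices (hence of opposite parity, so with no common $y$-neighbour), no three $x_i$ span a triangle on the $2m$-cycle, and $y_0y_1\notin E(D_m)$; on the other hand $x_0x_1x_2y_0x_0$ is a $4$-cycle, so the girth is exactly $4$. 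Together with the lower bound this gives $n_p(\{3,m\},4)=2m+2$ for $4\le m\le 13$.

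Since the whole argument is a verification, the only place that wants a little care is the girth step at the bottom of the range: for $m=4$ the outer cycle has length only $8$, so one should confirm that vertices two apart on it remain non-adjacent (they do, as $8\ge 4$), and one should read the incidence lists defining $D_m$ so that every $x_i$ ends with degree exactly $3$ — in particular so that the last $x$-edge closes the $2m$-cycle rather than introducing a chord. Beyond that, no obstacle is expected.
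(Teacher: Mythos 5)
Your proposal matches the paper's argument exactly: the lower bound is taken from the corollary derived from Lemma~\ref{lem:linklowbound}, and the upper bound comes from observing that $D_m$ is a planar $(\{3,m\};4)$-graph on $2m+2$ vertices (the paper dismisses this verification with ``clearly,'' whereas you spell out the degree, planarity, and girth checks, and you correctly read the edge $\{x_1,x_{2m-1}\}$ in the paper's definition as the cycle-closing edge $x_0x_{2m-1}$). No substantive difference.
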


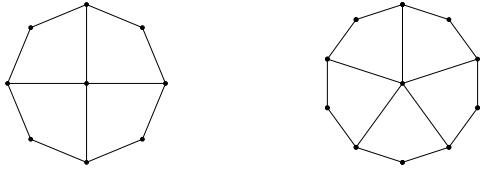
\begin{figure}[h]
  \centering
  \usetikzlibrary{decorations,decorations.markings,decorations.text,calc,arrows}

\begin{tikzpicture} [scale=0.7,
  _vertex/.style ={circle,draw=black, fill=black,inner sep=0.5pt},
  r_vertex/.style={circle,draw=red,  fill=red,inner sep=0.5pt},
  g_vertex/.style={circle,draw=green,fill=green,inner sep=0.5pt},
  b_vertex/.style={circle,draw=blue, fill=blue,inner sep=0.5pt},
  _edge/.style={black,line width=0.2pt},
  r_edge/.style={red,line width=0.3pt},
  g_edge/.style={green,line width=0.3pt},
  b_edge/.style={blue,line width=0.3pt},
 every edge/.style={draw=black,line width=0.3pt},
t_edge/.style={cap=round, ultra thick}]

\def\rad{1.5}

\begin{scope}[xshift=3cm]
\coordinate (u0) at (0,0);
\foreach \angle/\i in {90.00/1,126.00/2,162.00/3,198.00/4,234.00/5,270.00/6,306.00/7,342.00/8,738.00/9,774.00/10}
{
   \coordinate (u\i) at (\angle:\rad);
}
\foreach \i/\j in {1/2,2/3,3/4,4/5,5/6,6/7,7/8,8/9,9/10,10/1}
{
   \draw[_edge] (u\i) -- (u\j);
}
\foreach \i in {1,3,5,7,9}
{
  \draw[_edge] (u\i) -- (u0);
}
\foreach \i in {1,2,3,4,5,6,7,8,9,10}
{
   \node[_vertex] (U\i) at (u\i) {};
}
\node[_vertex] (U0) at (u0) {};
\end{scope}

\begin{scope}[xshift=-3cm]
\def\rad{1.5}
\def\Rad{0.9}

\foreach \angle/\i in {90/1,135/2,180/3,225/4,270/5,315/6,0/7,45/8}{
   \coordinate (v\i) at (\angle:\rad);
}
\coordinate (v0) at (0,0);

\foreach \i in {1,3,5,7}{
\draw[_edge] (v\i) -- (v0);
}

\foreach \i/\j in {8/1,1/2,2/3,3/4,4/5,5/6,6/7,7/8}{
\draw[_edge] (v\i) -- (v\j);
}
\foreach \i in {1,2,3,4,5,6,7,8}{
   \node[_vertex] (V\i) at (v\i){};
\node[_vertex] (U\i) at (u\i){};

}
\node[_vertex] (V0) at (v0){};
\end{scope}

\end{tikzpicture}
  \caption{$D_m$ graphs for $m=4,5$. In each case, all vertices of degree two in the boundary are adjacent to an external vertex.}
  \label{fig:subdivided_cube}
\end{figure}


Let $F$ be the graph whose set of vertices is
$\{x, x_0, \ldots, x_7\}$ and whose set of edges is
$\{ x_i x_{i+1} | i=0, \ldots, 6\} \cup \{x_0 x_{7} \} \cup \{x x_{2i}
| i=0,1,2\}.$ Let $E_4$ be the graph whose set of vertices is
$\{ x_0, \ldots, x_7\}$ and whose set of edges is
$\{ x_i x_{i+1} | i=0, \ldots, 6\} \cup \{x_0 x_{7} \} \cup \{x_0
x_{4}\}.$ A depiction of these graphs can be found in
Figure~\ref{fig:g4pieces}.

For $k\geq 3, $ let $Z'_k$ be the graph resulting from joining two
copies of $F,$ $k-2$ copies of $E_4$ and $k-1$ paths of length $2,$
$P$ as follows: start with a copy of $F,$ and join it to a copy of the
path $P$ by identifying its only even labeled vertex of degree two
with an end of $P$, join the remaining end of $P$ to a copy of $E_4$
by one of it's two even labeled vertices of degree two, now repeat
this procedure subsequently joining copies of $E_4$ to copies of $P$
and ending with the second copy of $F$. This graph has $9k+1$
vertices, out of which $5k-1$ have degree two and the remaining
vertices have degree three. A depiction of $Z'_k$ can be found in
Figure~\ref{fig:G4_cage}.

\begin{figure}[h]
\begin{center}
\includegraphics[scale=0.2]{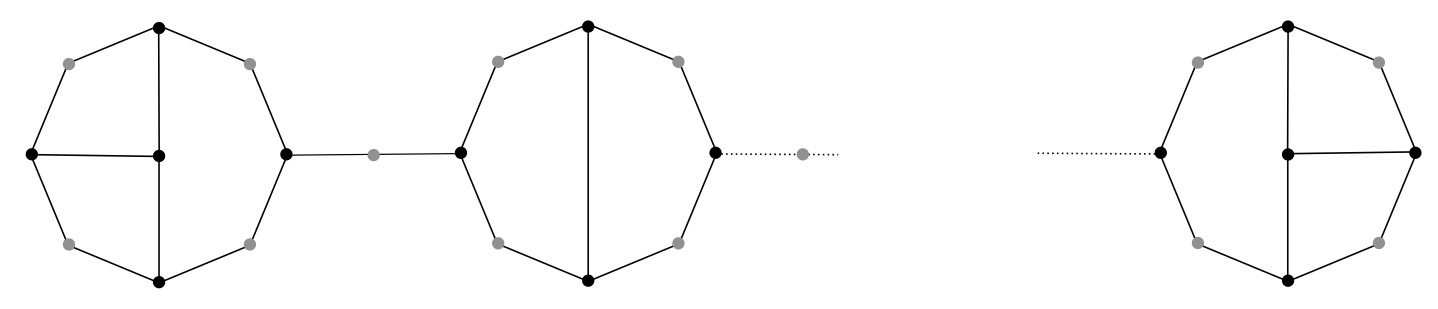} 
\caption{A graph $Z'_k$ with $k=4$. In order to build $Z_k$ we may join all degree two vertices in $Z'_k$ to the same vertex of degree $m=9k+1.$ }
\label{fig:G4_cage}
\end{center}
\end{figure}

Let $Z_k$ be the graph whose set of vertices is $V(Z'_k) \cup \{x^*\}$
and whose set of edges is
$E(Z'_k) \cup \{x^* y | y \in V(Z'_k) \text{ and } deg(y)=2\}.$
Observe that $Z_k$ has $9k+2$ vertices, out of which one has degree
$5k-1$ and the remaining vertices have degree 3. This construction
proves:

\begin{proposition}
$n_p(\{3, m\}, 4)\leq \frac{9m+19}{5}$, for $m=5k-1$ and $k\geq 3.$
\end{proposition}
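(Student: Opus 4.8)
The plan is to check that the graph $Z_k$ constructed above is a planar $(\{3,m\},4)$-graph whose order is $\tfrac{9m+19}{5}$, which then yields the stated bound at once. First I would record the parameters: since $m=5k-1$ we have $k=\tfrac{m+1}{5}$, so the $9k+2$ vertices of $Z_k$ number $\tfrac{9(m+1)}{5}+2=\tfrac{9m+19}{5}$; by construction the apex $x^{*}$ is joined to every degree-two vertex of $Z'_k$, of which there are $5k-1=m$, so $x^{*}$ has degree $m$ and every other vertex of $Z_k$ has degree $3$. Hence it only remains to check that $Z_k$ is planar and has girth exactly $4$.

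For planarity I would first observe that $Z'_k$ is planar: it is obtained by identifying vertices of the planar graphs $F$, $E_4$ and the length-two paths $P$ along single vertices, so each block of $Z'_k$ is either a bridge or one of the $2$-connected pieces coming from a copy of $F$ or of $E_4$, and a graph all of whose blocks are planar is itself planar. Next I would exhibit, for each piece, a plane drawing in which all of its degree-two vertices together with the vertices used for gluing lie on the outer face: draw the defining $8$-cycle of $F$ (respectively $E_4$) as a convex octagon and route the spokes at $x$ (respectively the chord $x_0x_4$) in its interior, so that every vertex of the octagon lies on the outer face. Chaining these drawings along the bridges coming from the copies of $P$ gives a plane drawing of $Z'_k$ whose outer face carries all $m$ of its degree-two vertices on its boundary; placing $x^{*}$ in that face and joining it to those $m$ vertices introduces no crossings, so $Z_k$ is planar (compare Figure~\ref{fig:G4_cage}).

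For the girth I would argue that every cycle of $Z'_k$ is contained in one of its blocks, and the blocks are bridges or copies of $F$ — which has girth $4$, witnessed e.g. by $x\,x_0\,x_1\,x_2$ — or copies of $E_4$, which has girth $5$; hence $Z'_k$ has girth $4$. Adding $x^{*}$ keeps the graph simple, and it creates a triangle only if two neighbours of $x^{*}$, i.e. two degree-two vertices of $Z'_k$, are adjacent. I would then rule this out piece by piece: in a copy of $F$ the surviving degree-two vertices are $x_1,x_3,x_5,x_7$ (the vertex $x_6$ is consumed by a gluing and has degree $3$), which are pairwise non-adjacent; in a copy of $E_4$ they are $x_1,x_3,x_5,x_7$ (since $x_2,x_6$ are consumed by gluings), again pairwise non-adjacent; each copy of $P$ contributes only its midpoint, whose two neighbours are the degree-three gluing vertices; and the identifications themselves never produce a degree-two vertex. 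So $Z_k$ is triangle-free, while it still contains a $4$-cycle (any $4$-cycle already present in $Z'_k$, or $x^{*}x_5x_6x_7x^{*}$ inside a copy of $F$), and therefore has girth exactly $4$. Combining this with the parameter count shows that $Z_k$ is a planar $(\{3,m\},4)$-graph on $\tfrac{9m+19}{5}$ vertices, which proves the proposition.

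The steps I expect to require the most care are the two local checks inside $F$ and $E_4$: that their degree-two vertices and their gluing vertices can be placed simultaneously on the outer face (so that $x^{*}$ can be attached planarly), and that no two degree-two vertices of $Z'_k$ are ever adjacent (which is exactly what makes $Z_k$ triangle-free). The block-decomposition arguments for planarity and girth, and the arithmetic, are then routine.
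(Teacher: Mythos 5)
Your proof is correct and follows the same route as the paper, which simply presents the construction of $Z_k$ and asserts that it establishes the bound. You supply the verification the paper leaves implicit (vertex and degree counts, planarity of the chained blocks with all degree-two vertices on the outer face, and triangle-freeness after attaching $x^{*}$), and all of these checks are accurate.
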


Finally, note that in the previous construction we may delete up to two degree $2$ (diametrically opposite) vertices from each $E_4$ or $F,$ without violating the girth condition in $Z'_k.$ Thus, for $m > 14$ we may find the smallest $m^*$ such that $m<m^*$ and $m* =5k^*-1$ for some $k^*$, make the construction $Z'_{k^*}$ and then remove $m^*-m$ degree two vertices from such construction to obtain an improved upper bound for any $m.$  A simple computation proves $m^*=5 \lceil \frac{m+1}{5}\rceil-1$ and the number of vertices of such construction will be $m+4 \lceil \frac{m+1}{5}\rceil+3.$ Hence, we have:

\begin{proposition}
$n_p(\{3, m\}, 4)\leq m+4 \lceil \frac{m+1}{5}\rceil+3$, for $m>14.$
\end{proposition}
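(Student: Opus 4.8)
The plan is to reuse the graphs $Z'_{k}$ and the trick behind $Z_{k}$, but starting from the smallest $k^{*}$ for which $5k^{*}-1\geq m$ and then trimming off the excess before adjoining a universal vertex. Set $k^{*}=\lceil\frac{m+1}{5}\rceil$ and $m^{*}=5k^{*}-1$. Since $m>14$ gives $\frac{m+1}{5}>3$, we have $k^{*}\geq 4\geq 3$, so $Z'_{k^{*}}$ is defined; and $\lceil\frac{m+1}{5}\rceil<\frac{m+1}{5}+1$ together with $\lceil\frac{m+1}{5}\rceil\geq\frac{m+1}{5}$ give $m\leq m^{*}\leq m+4$, i.e. $0\leq m^{*}-m\leq 4$. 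Recall that $Z'_{k^{*}}$ has $9k^{*}+1$ vertices, exactly $m^{*}=5k^{*}-1$ of which have degree $2$ (these being pairwise non-adjacent, as the girth-$4$ property of $Z_{k^{*}}$ shows) and the remaining $4k^{*}+2$ of degree $3$; that it has girth $4$; and that it has a planar drawing with all its degree-$2$ vertices on a common face. The order we are aiming for is $m+4\lceil\frac{m+1}{5}\rceil+3=m+4k^{*}+3$.

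First I would pass from $Z'_{k^{*}}$ to a graph $\widetilde{Z}$ by \emph{suppressing} $m^{*}-m$ of its degree-$2$ vertices, where suppressing a degree-$2$ vertex $u$ means deleting $u$ and joining its two (degree-$3$) neighbours by an edge. Each suppression removes one vertex and changes no other degree, so $\widetilde{Z}$ has $(9k^{*}+1)-(m^{*}-m)=m+4k^{*}+2$ vertices, exactly $m$ of degree $2$ and the rest of degree $3$. The vertices to be suppressed are chosen so that at most two of them, and these ``diametrically opposite'', lie in any single copy of $F$ or $E_{4}$ used to assemble $Z'_{k^{*}}$; since $Z'_{k^{*}}$ is built from $k^{*}\geq 4$ such pieces, each admitting two admissible suppressions that can be carried out simultaneously, and $m^{*}-m\leq 4\leq 2k^{*}$, such a choice exists. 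I would then check that $\widetilde{Z}$ still has girth $4$, that its $m$ degree-$2$ vertices are still pairwise non-adjacent, and that it still has a planar drawing with all of them on a common face; planarity and the face condition are immediate since suppression is an edge contraction, so the only real content is the girth check.

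Finally, exactly as in the construction of $Z_{k}$, adjoin a new vertex $x^{*}$ joined to all $m$ degree-$2$ vertices of $\widetilde{Z}$, obtaining a graph $Z$ on $m+4k^{*}+3$ vertices in which $x^{*}$ has degree $m$ and every other vertex has degree $3$. Since those $m$ vertices form an independent set, $x^{*}$ lies on no triangle; combined with the girth-$4$ property of $\widetilde{Z}$ this yields girth $4$ for $Z$, and placing $x^{*}$ in the face of $\widetilde{Z}$ that contains all its neighbours keeps $Z$ planar. Hence $Z$ is a $(\{3,m\};4)$-planar graph of order $m+4\lceil\frac{m+1}{5}\rceil+3$, which proves the bound.

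The main obstacle is the girth verification for $\widetilde{Z}$ (and, in the last step, for $Z$). The pieces $E_{4}$ --- an $8$-cycle with a chord --- and $F$ --- an $8$-cycle with a degree-$3$ hub adjacent to three of its vertices --- already contain pairs of vertices at distance two whose merging under a suppression would create a triangle, and this is precisely why only ``diametrically opposite'' suppressions, and at most two per piece, are permitted. What has to be confirmed is that after any such admissible set of suppressions the two neighbours of each suppressed vertex still have no common neighbour other than the suppressed vertex (so no $3$-cycle is introduced) and that no two of the remaining degree-$2$ vertices become adjacent (so no triangle is introduced through $x^{*}$); everything else --- the vertex count, planarity, and the degree bookkeeping --- is routine.
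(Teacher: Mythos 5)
Your proposal is correct and follows essentially the same route as the paper: take $k^*=\lceil\frac{m+1}{5}\rceil$, build $Z'_{k^*}$, remove $m^*-m\le 4$ degree-two vertices (at most two per piece, chosen so as not to create triangles), and then attach the universal vertex. You are in fact more careful than the paper in making explicit that the removal must be a \emph{suppression} (so that the neighbours retain degree $3$) and in isolating the girth verification as the one substantive check; the only small caution is that in the piece $F$ the hub forces the safe suppressions to be $x_5,x_7$ rather than a genuinely ``diametrically opposite'' pair, but since only one safe suppression per piece is needed across $k^*\ge 4$ pieces, your existence claim stands.
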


\subsection{Girth $5$}

\begin{theorem} \label{thm:girth5}
The $(\{r,m\};5)$-cages are as follows:
{\footnotesize
\begin{center}
\begin{tabular}{lllll}
$r$  & $m$ & $n_p(\{r,m\}, 5)$ & Graphs & Full list\\ 
 \hline 
$2$ & $m$& $2m+1$ & $O_{m,5}$ & yes \\ 
$2$ & $m \text{ even }$& $2m+1$ & $F_{m,5}$ & yes \\ 
$3$ & $4 \leq m \leq 5$& $3m+11 \leq n_p(\{3,m\},5) \leq 6m+2$ & $P_m$  & - \\ 
$3$ & $6 \leq m$, $m \text{ even }$& $3m+11 \leq n_p(\{3,m\},5) \leq 3m+2\lfloor\frac{m-6}{4}\rfloor+21$ & $B_m$ & - \\ 
$3$ & $6 \leq m$, $m \text{ odd }$& $3m+11 \leq n_p(\{3,m\},5) \leq 3m+2\lfloor\frac{m-5}{4}\rfloor+22$ & $B_m$ & - \\ 
\end{tabular} 
\end{center}}

\end{theorem}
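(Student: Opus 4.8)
The plan is to read \ref{thm:girth5} as a summary of what the three following subsections establish, so the ``proof'' is mainly a roadmap. By Lemma~\ref{lemma:biregularcages}, the only admissible triplets with $g=5$ are those with $r\in\{2,3\}$ and $r<m$, which explains the two blocks of rows. For the lower bounds, Corollary~\ref{cor:lowerbounds}(g) gives $n_p(\{2,m\};5)\ge 2m+1$ and Corollary~\ref{cor:lowerbounds}(h) gives $n_p(\{3,m\};5)\ge 3m+11$; these are the left endpoints of every entry in the $n_p$ column. It then remains to exhibit matching planar graphs (for $r=2$) or small-excess ones (for $r=3$), and, for $r=2$, to certify that the list of extremal graphs is complete.

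For the $r=2$ rows I would proceed as in the girth-$3$ and girth-$4$ cases. First I exhibit two explicit planar $(\{2,m\};5)$-graphs on exactly $2m+1$ vertices: the ``generalized theta'' graph $O_{m,5}$, namely two vertices of degree $m$ joined by one internally disjoint path of length $2$ and $m-1$ of length $3$ (any two of the $m$ paths close a cycle of length $\ge 5$, so $g=5$; the order is $2+(1+2(m-1))=2m+1$), and, when $m$ is even, the graph $F_{m,5}$ formed by $m/2$ pentagons sharing one vertex (order $1+4(m/2)=2m+1$). Both meet the lower bound, so $n_p(\{2,m\};5)=2m+1$. For the completeness (``full list'') claim I would bound the number $x$ of degree-$m$ vertices of an extremal graph $G$ (which we may take connected): Euler's formula together with ``every face has length $\ge 5$'' gives $e\le\tfrac{5}{3}(2m-1)$ for a planar $(\{2,m\};5)$-graph on $2m+1$ vertices, and since $2e=2(2m+1-x)+mx$ this forces $x\le 8/3$, i.e.\ $x\in\{1,2\}$. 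When $x=2$, contracting the degree-$2$ paths leaves the two degree-$m$ vertices joined by $m$ internally disjoint paths with pairwise length-sums $\ge5$; minimizing the order pins down one path of length $2$ and $m-1$ of length $3$, so $G\cong O_{m,5}$. When $x=1$, deleting the hub leaves a disjoint union of paths whose $m$ endpoints all meet the hub, so $m$ is even and each path closes a cycle of length $\ge5$ with the hub; minimality makes every such cycle a pentagon, so $G\cong F_{m,5}$. This gives the ``yes'' entries.

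For the $r=3$ rows the target is only upper bounds of the form $3m+\mathrm{O}(m)$, so I would simply introduce the construction families named in the table and verify their parameters: $P_m$ for $m\in\{4,5\}$ yielding $n_p(\{3,m\};5)\le 6m+2$, and the families $B_m$ for $m\ge6$, with the parity-dependent orders $3m+2\lfloor\tfrac{m-6}{4}\rfloor+21$ and $3m+2\lfloor\tfrac{m-5}{4}\rfloor+22$, built by chaining a fixed number of rigid ``end gadgets'' carrying the degree-$5$ vertices with long runs of degree-$3$ ``spacer'' vertices, in the spirit of the $Z_k$ construction used for girth $4$. In each case one checks planarity, that exactly the prescribed vertices have the two degrees, and --- the delicate point --- that no face (hence no cycle) is shorter than $5$, which is precisely what dictates how far apart consecutive gadgets and the attachment points must sit, and which produces the floor terms. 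Together with Corollary~\ref{cor:lowerbounds}(h) this yields the displayed inequalities, and the small cases $m\in\{4,5\}$ are covered separately by $P_m$.

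The main obstacle is the $r=3$ case: I do not expect the Moore-type bound $3m+11$ to be tight, and closing the gap would need a genuine structural argument, presumably a girth-$5$ refinement of Lemma~\ref{lem:linklowbound} and the $link_G(x)$/intersection-graph machinery (Lemmas~\ref{lem:chorizo1}--\ref{lem:chorizo2} and~\ref{lemma:outerplanar}), where the local configurations around a degree-$m$ vertex are governed by pentagonal rather than quadrilateral faces and are therefore larger and harder to analyze; this is why the last column is left open for $r=3$. A secondary technical hurdle is the girth certification of the $B_m$ families: because the degree-$3$ spacers are attached to inflexible end gadgets, one must rule out short cycles that run through two consecutive pieces, and it is this check that fixes the constants $21$, $22$ and the floors $\lfloor\tfrac{m-6}{4}\rfloor$, $\lfloor\tfrac{m-5}{4}\rfloor$.
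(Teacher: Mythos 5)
Your proposal follows essentially the same route as the paper: Lemma~\ref{lemma:biregularcages} to restrict to $r\in\{2,3\}$, Corollary~\ref{cor:lowerbounds} for the lower bounds, the constructions $O_{m,5}$, $F_{m,5}$, $P_m$ and $B_m$ for the upper bounds, and for $r=2$ a uniqueness argument that first bounds the number of degree-$m$ vertices by two and then case-analyzes $x=1$ (windmill of pentagons) and $x=2$ (theta graph), exactly as in Lemma~\ref{lem:2mcages}. The only cosmetic difference is that you derive $x\le 2$ by a direct Euler-formula count rather than by invoking the monotonicity bound of Lemma~\ref{lemma:Badbound}, and like the paper you leave the detailed girth verification of the $B_m$ family to the construction itself.
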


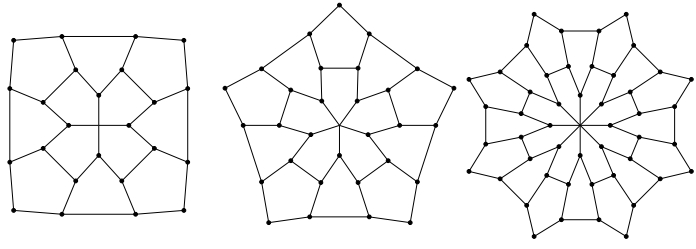
\begin{figure}[t]
  \centering
  \begin{tikzpicture} [scale=0.8,
  _vertex/.style ={circle,draw=black, fill=black,inner sep=0.5pt},
  _edge/.style={black,line width=0.2pt},
rotate=180]

\begin{scope}[xshift=4cm]
\def\ra{0.5}
\def\rb{1}
\def\rc{1.6}
\def\rd{2}
\coordinate (u0) at (0,0);
\foreach \angle/\i in {90.00/1,180.00/2,270.00/3,0.00/4}
{
   \coordinate (v\i) at (\angle:\ra);
}
\foreach \angle/\i in {67.50/1,112.50/2,157.50/3,202.50/4,247.50/5,292.50/6,337.50/7,22.50/8}
{
   \coordinate (w\i) at (\angle:\rb);
}
\foreach \angle/\i in {67.50/1,112.50/2,157.50/3,202.50/4,247.50/5,292.50/6,337.50/7,22.50/8}
{
   \coordinate (x\i) at (\angle:\rc);
}
\foreach \angle/\i in {135.00/1,225.00/2,315.00/3,45.00/4}
{
   \coordinate (y\i) at (\angle:\rd);
}
\foreach \i in {1,2,...,4}{
    \pgfmathsetmacro\j{2*\i-1}
    \pgfmathsetmacro\k{2*\i}
    \draw[_edge] (v\i) -- (u0);
    \draw[_edge] (v\i) -- (w\j);
    \draw[_edge] (v\i) -- (w\k);
    \draw[_edge] (x\j) -- (w\j);
    \draw[_edge] (x\k) -- (w\k);
    \draw[_edge] (x\j) -- (x\k);
    \draw[_edge] (y\i) -- (x\k);
}
\foreach \i/\k in {1/3,2/5,3/7,4/1}{
    \pgfmathsetmacro\j{2*\i}
    \draw[_edge] (w\j) -- (w\k);
    \draw[_edge] (y\i) -- (x\k);
}
\foreach \i in {1,2,3,4}
{
   \node[_vertex] (V\i) at (v\i) {};
}
\foreach \i in {1,2,3,4,5,6,7,8}
{
   \node[_vertex] (W\i) at (w\i) {};
}
\foreach \i in {1,2,3,4,5,6,7,8}
{
   \node[_vertex] (X\i) at (x\i) {};
}
\foreach \i in {1,2,3,4}
{
   \node[_vertex] (Y\i) at (y\i) {};
}
\end{scope}

\begin{scope}[xshift=0cm]
\def\ra{0.5}
\def\rb{1}
\def\rc{1.6}
\def\rd{2}
\coordinate (u0) at (0,0);
\foreach \angle/\i in {90.00/1,162.00/2,234.00/3,306.00/4,18.00/5}
{
   \coordinate (v\i) at (\angle:\ra);
}
\foreach \angle/\i in {72.00/1,108.00/2,144.00/3,180.00/4,216.00/5,252.00/6,288.00/7,324.00/8,0.00/9,36.00/10}
{
   \coordinate (w\i) at (\angle:\rb);
}
\foreach \angle/\i in {72.00/1,108.00/2,144.00/3,180.00/4,216.00/5,252.00/6,288.00/7,324.00/8,0.00/9,36.00/10}
{
   \coordinate (x\i) at (\angle:\rc);
}
\foreach \angle/\i in {126.00/1,198.00/2,270.00/3,342.00/4,54.00/5}
{
   \coordinate (y\i) at (\angle:\rd);
}
\foreach \i in {1,2,...,5}{
    \pgfmathsetmacro\j{2*\i-1}
    \pgfmathsetmacro\k{2*\i}
    \draw[_edge] (v\i) -- (u0);
    \draw[_edge] (v\i) -- (w\j);
    \draw[_edge] (v\i) -- (w\k);
    \draw[_edge] (x\j) -- (w\j);
    \draw[_edge] (x\k) -- (w\k);
    \draw[_edge] (x\j) -- (x\k);
    \draw[_edge] (y\i) -- (x\k);
}
\foreach \i/\k in {1/3,2/5,3/7,4/9,5/1}{
    \pgfmathsetmacro\j{2*\i}
    \draw[_edge] (w\j) -- (w\k);
    \draw[_edge] (y\i) -- (x\k);
}
\foreach \i in {1,2,3,4,5}
{
   \node[_vertex] (V\i) at (v\i) {};
}
\foreach \i in {1,2,3,4,5,6,7,8,9,10}
{
   \node[_vertex] (W\i) at (w\i) {};
}
\foreach \i in {1,2,3,4,5,6,7,8,9,10}
{
   \node[_vertex] (X\i) at (x\i) {};
}
\foreach \i in {1,2,3,4,5}
{
   \node[_vertex] (Y\i) at (y\i) {};
}
\end{scope}

\begin{scope}[xshift=-4cm]
\def\ra{0.5}
\def\rb{1}
\def\rc{1.6}
\def\rd{2}

\coordinate (u0) at (0,0);
\foreach \angle/\i in {90.00/1,135.00/2,180.00/3,225.00/4,270.00/5,315.00/6,0.00/7,45.00/8}
{
   \coordinate (v\i) at (\angle:\ra);
}
\foreach \angle/\i in {78.75/1,101.25/2,123.75/3,146.25/4,168.75/5,191.25/6,213.75/7,236.25/8,258.75/9,281.25/10,303.75/11,326.25/12,348.75/13,11.25/14,33.75/15,56.25/16}
{
   \coordinate (w\i) at (\angle:\rb);
}
\foreach \angle/\i in {78.75/1,101.25/2,123.75/3,146.25/4,168.75/5,191.25/6,213.75/7,236.25/8,258.75/9,281.25/10,303.75/11,326.25/12,348.75/13,11.25/14,33.75/15,56.25/16}
{
   \coordinate (x\i) at (\angle:\rc);
}
\foreach \angle/\i in {112.50/1,157.50/2,202.50/3,247.50/4,292.50/5,337.50/6,22.50/7,67.50/8}
{
   \coordinate (y\i) at (\angle:\rd);
}
\foreach \i in {1,2,...,8}{
    \pgfmathsetmacro\j{2*\i-1}
    \pgfmathsetmacro\k{2*\i}
    \draw[_edge] (v\i) -- (u0);
    \draw[_edge] (v\i) -- (w\j);
    \draw[_edge] (v\i) -- (w\k);
    \draw[_edge] (x\j) -- (w\j);
    \draw[_edge] (x\k) -- (w\k);
    \draw[_edge] (x\j) -- (x\k);
    \draw[_edge] (y\i) -- (x\k);
}
\foreach \i/\k in {1/3,2/5,3/7,4/9,5/11,6/13,7/15,8/1}{
    \pgfmathsetmacro\j{2*\i}
    \draw[_edge] (w\j) -- (w\k);
    \draw[_edge] (y\i) -- (x\k);
}
\foreach \i in {1,2,3,4,5,6,7,8}
{
   \node[_vertex] (V\i) at (v\i) {};
}
\foreach \i in {1,2,3,4,5,6,7,8,9,10,11,12,13,14,15,16}
{
   \node[_vertex] (W\i) at (w\i) {};
}
\foreach \i in {1,2,3,4,5,6,7,8,9,10,11,12,13,14,15,16}
{
   \node[_vertex] (X\i) at (x\i) {};
}
\foreach \i in {1,2,3,4,5,6,7,8}
{
   \node[_vertex] (Y\i) at (y\i) {};
}
\end{scope}
\end{tikzpicture}
  \caption{A family of planar \bigraphs[3,m]{5}. In all cases, the
    vertices of degree two in the boundary are adjacent to an external
    vertex.}
  \label{fig:pentapinatas}
\end{figure}

The proof of the theorem and description of each graph family is split
in the following subsections. However, we may remark that
Lemma~\ref{lemma:biregularcages} implies that $2\leq r \leq 3, r <m$
and Corollary~\ref{cor:lowerbounds} points that
$n_p(\{2,m\},4) \geq 2m+1$ and $n_p(\{3,m\},4) \geq 3m+11.$

\subsubsection{Planar \bicages[2,m]{5}}

This case will follow as a consequence of a more general construction in the next section. We will define the graphs $O_{m,g}$ and $F_{m,g}$  for $g\geq3$, and prove that they are the only planar cages in Lemma~\ref{lem:2mcages}. 

\subsubsection{Planar \bicages[3,m]{5}}

Note that, if we allow two vertices of degree $m$ in
\bigraphs[3,m]{5}, from Lemma~\ref{lemma:Badbound} we obtain that such
graphs must have at least $6m+2$ vertices.  Here we present an
infinite family of \bigraphs[3,m]{5}, $P_m,$ meeting that bound, see
Figure~\ref{fig:pentapinatas}. This proves that indeed:

\begin{proposition} For $4\leq m$, $n_p(\{3,m\},5)\leq 6m+2.$
\end{proposition}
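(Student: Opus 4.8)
The plan is to prove the bound by a construction: I will exhibit the family $P_m$ of Figure~\ref{fig:pentapinatas} as an explicit planar $(\{3,m\};5)$-graph on $6m+2$ vertices. Since $n_p(\{3,m\};5)$ is, by definition, the minimum order of a planar graph of girth $5$ all of whose degrees lie in $\{3,m\}$, producing one such graph on $6m+2$ vertices gives $n_p(\{3,m\};5)\le 6m+2$ at once.

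First I would record $P_m$ precisely as a ``barrel'' with two poles. Let it have vertex set $\{u_0,x^{*}\}\cup\{v_1,\dots,v_m\}\cup\{w_1,\dots,w_{2m}\}\cup\{x_1,\dots,x_{2m}\}\cup\{y_1,\dots,y_m\}$, the indices in each layer read cyclically, and edge set consisting of $u_0v_i$, $v_iw_{2i-1}$, $v_iw_{2i}$, $w_{2i}w_{2i+1}$, $w_ix_i$, $x_{2i-1}x_{2i}$, $y_ix_{2i}$, $y_ix_{2i+1}$ and $x^{*}y_i$ for all admissible $i$. The bookkeeping is then routine: $v(P_m)=1+m+2m+2m+m+1=6m+2$; each of $u_0,x^{*}$ has degree $m$ while every other vertex has degree $3$ (e.g.\ $v_i\sim u_0,w_{2i-1},w_{2i}$; $w_{2i-1}\sim v_i,w_{2i-2},x_{2i-1}$; $w_{2i}\sim v_i,w_{2i+1},x_{2i}$; $x_{2i-1}\sim w_{2i-1},x_{2i},y_{i-1}$; $x_{2i}\sim w_{2i},x_{2i-1},y_i$; $y_i\sim x_{2i},x_{2i+1},x^{*}$), so, using $m\ge 4>3$, both degrees $3$ and $m$ occur. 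Planarity is witnessed by the drawing in Figure~\ref{fig:pentapinatas}, with $u_0$ in the centre and $x^{*}$ bounding the outer region; one reads off that the $4m$ faces are exactly the $5$-cycles $u_0v_iw_{2i}w_{2i+1}v_{i+1}$, $v_iw_{2i-1}x_{2i-1}x_{2i}w_{2i}$, $w_{2i}w_{2i+1}x_{2i+1}y_ix_{2i}$ and $x^{*}y_ix_{2i+1}x_{2i+2}y_{i+1}$, each edge lying on exactly two of them; this matches Euler's formula, since $e(P_m)=\tfrac12(2m+3\cdot 6m)=10m$ and hence $f=e-v+2=4m$ with $2e=5f$.

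It then remains to check that the girth of $P_m$ is exactly $5$. One of the faces above is a $5$-cycle, so the girth is at most $5$; the content is to show there is no cycle of length $3$ or $4$. I would do this by a finite case analysis organised by the layer partition $\{u_0\},\{v_i\},\{w_i\},\{x_i\},\{y_i\},\{x^{*}\}$: every edge joins consecutive layers except for the two perfect matchings $\{w_{2i}w_{2i+1}\}$ on the $w$-layer and $\{x_{2i-1}x_{2i}\}$ on the $x$-layer, each consecutive pair of layers induces either a matching or a disjoint union of paths (so contains no cycle at all), and a short cycle that uses a within-layer matching edge must leave and re-enter that layer, which already costs length at least three, forcing total length at least $5$. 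The two poles need a separate remark: distinct $v_i,v_j$ have no common neighbour besides $u_0$ (their other neighbours are the disjoint pairs $\{w_{2i-1},w_{2i}\}$ and $\{w_{2j-1},w_{2j}\}$), and likewise distinct $y_i,y_j$ share no neighbour besides $x^{*}$, so no triangle or $4$-cycle passes through $u_0$ or $x^{*}$.

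The only genuinely delicate point is this girth verification, and within it the care needed for the cyclic wrap-around $\bmod\,2m$ in each layer and for the smallest value $m=4$ (where the $w$- and $x$-layers have only six vertices, and one must make sure the matchings together with the identifications mod $2m$ introduce no short cycle); everything else follows immediately from the explicit adjacency list. Once girth $=5$ is confirmed, $P_m$ is a planar $(\{3,m\};5)$-graph of order $6m+2$, and the Proposition follows.
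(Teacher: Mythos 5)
Your proposal is correct and takes essentially the same route as the paper: the paper proves this proposition simply by exhibiting the family $P_m$ of Figure~\ref{fig:pentapinatas} as a planar $(\{3,m\};5)$-graph on $6m+2$ vertices, which is exactly the graph you reconstruct. You merely spell out the adjacency list, degree count, face structure and girth check that the paper leaves implicit in the figure.
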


Furthermore, consider the family of graphs graph $B'_m$ depicted in
Figure~\ref{fig:betterG5}. This family has
$v=3m+2\lfloor\frac{m-6}{4}\rfloor+20$ of which $m=2l+6$ vertices  {and  $l\geq 0$,}  have
degree two and the rest have degree $3.$ Let $B_m$ be the graph in
which we add a vertex to $B'_m$ joined to all its vertices of degree
two. This construction proves that:

\begin{proposition} For $6\leq m,$ even, $n_p(\{3,m\},5)\leq 3m+2\lfloor\frac{m-6}{4}\rfloor+21.$
\end{proposition}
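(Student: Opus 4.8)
The plan is to exhibit an explicit family of planar $(\{3,m\};5)$--graphs on $3m+2\lfloor\frac{m-6}{4}\rfloor+21$ vertices and verify that these graphs have the required properties. The starting point is the family $B'_m$ already described in the text: it is a planar graph with $v=3m+2\lfloor\frac{m-6}{4}\rfloor+20$ vertices, $m=2l+6$ of which (with $l\geq 0$) have degree two and all the rest have degree three, and — as one reads off from Figure~\ref{fig:betterG5} — it has girth at least $5$. The graph $B_m$ is obtained from $B'_m$ by adding a single new vertex $w$ joined to every degree-two vertex of $B'_m$. First I would check that $B_m$ has exactly $3m+2\lfloor\frac{m-6}{4}\rfloor+21$ vertices, which is immediate since we added one vertex to $B'_m$.

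Next I would verify that $B_m$ is a $(\{3,m\};5)$--graph. For the degree sequence: the $m$ vertices that had degree two in $B'_m$ each acquire one new edge to $w$ and so have degree three in $B_m$; the vertices that had degree three in $B'_m$ are untouched and still have degree three; and the new vertex $w$ is adjacent to precisely the $m$ former degree-two vertices, hence has degree $m$. So $B_m$ has exactly one vertex of degree $m$ and all other vertices of degree $3$, i.e. it is biregular with degrees $\{3,m\}$ (using $m>3$, which holds here since $m\geq 6$). For girth: any new cycle through $w$ must enter and leave $w$ along two of the new edges, so it contains a path between two degree-two vertices of $B'_m$; I would argue from the structure depicted in Figure~\ref{fig:betterG5} that any two degree-two vertices of $B'_m$ are at distance at least $3$ in $B'_m$, so such a cycle has length at least $5$; combined with the fact that $B'_m$ itself has girth at least $5$, this gives $g(B_m)\geq 5$, and exhibiting one $5$-cycle (present already in $B'_m$) shows $g(B_m)=5$.

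Finally I would check planarity of $B_m$. The key point is that in the planar embedding of $B'_m$ suggested by Figure~\ref{fig:betterG5}, all degree-two vertices lie on the boundary of a single face (the outer face, as the caption indicates with the phrase ``adjacent to an external vertex''); placing $w$ in that face and joining it to all of them preserves planarity. Putting these three verifications together, $B_m$ is a planar $(\{3,m\};5)$--graph on $3m+2\lfloor\frac{m-6}{4}\rfloor+21$ vertices, so by definition of $n_p$ we get $n_p(\{3,m\},5)\leq 3m+2\lfloor\frac{m-6}{4}\rfloor+21$ for all even $m\geq 6$.

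The main obstacle is not any deep argument but the careful bookkeeping around Figure~\ref{fig:betterG5}: one must be sure that the claimed vertex count and degree count for $B'_m$ are correct for every $l\geq 0$ (including checking the parity/floor behavior as $m$ runs over $6,8,10,\dots$), that the girth-$5$ condition genuinely holds in all the ``blocks'' of the construction and at the seams between them, and — most delicately — that every degree-two vertex really is on the outer face so that adding $w$ keeps the graph planar. I would handle this by describing the repeating block of $B'_m$ explicitly (vertex labels and edges), proving the girth and outerface claims once for a block and once for the interface between consecutive blocks, and then concluding by induction on $l$.
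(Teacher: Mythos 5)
Your proposal is correct and follows essentially the same route as the paper: the paper likewise defines $B_m$ by adding a single vertex to $B'_m$ joined to all of its degree-two vertices and reads the vertex count, degrees, girth, and planarity off the structure shown in Figure~\ref{fig:betterG5}. Your additional remarks about what must be checked (distance at least $3$ between degree-two vertices, all of them lying on a common face) are exactly the implicit verifications the paper leaves to the figure.
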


Finally, note that in the construction of $B_m$ there's several degree two vertices that can be deleted without decreasing the girth. Thus for even $m\geq 7$ we may construct $B_{m+1}$ and then delete one vertex to obtain a graph with $3m+2\lfloor\frac{m-5}{4}\rfloor+22$ vertices.

\begin{proposition} For $7\leq m,$ odd, $n_p(\{3,m\},5)\leq 3m+2\lfloor\frac{m-5}{4}\rfloor+22.$
\end{proposition}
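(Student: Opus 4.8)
The plan is to reuse the even-parameter family $B_m$ constructed above rather than design a new one, exactly as indicated in the remark preceding this statement. Fix an odd integer $m\ge 7$; then $m+1$ is even and $m+1\ge 8$, so the planar $(\{3,m+1\};5)$-graph $B_{m+1}$ is available, together with its ``body'' $B'_{m+1}$: recall $B'_{m+1}$ is planar, has girth $5$, has $m+1$ vertices of degree $2$ — all of them on the outer face and distributed along the subdivided paths of the repeated gadget of Figure~\ref{fig:betterG5} — and all remaining vertices of degree $3$. I would first delete from $B'_{m+1}$ the degree-two vertices singled out in that remark, i.e.\ ones sitting in the interior of sufficiently long subdivided paths, each time suppressing the chosen vertex (removing it and joining its two neighbours by a new edge), so that planarity is preserved and only the cycles through the suppressed vertex shorten, each by exactly one. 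Carrying this out the number of times needed to reach order $3m+2\lfloor\tfrac{m-5}{4}\rfloor+21$ while leaving exactly $m$ vertices of degree $2$ — a bookkeeping step to be read off Figure~\ref{fig:betterG5} — produces a planar graph $\widetilde B$ of girth $5$ with precisely $m$ degree-two vertices, still cofacial.

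I would then re-attach an apex vertex $w$ in the face of $\widetilde B$ carrying those $m$ degree-two vertices, joining it to all of them. This keeps the graph planar; it makes the degree sequence ``one vertex $w$ of degree $m$, every other vertex of degree $3$''; and it keeps the girth equal to $5$, because consecutive degree-two vertices around that face are separated by at least three edges of $\widetilde B$ (a property inherited from $B'_{m+1}$), so no edge $wx$ closes a triangle or a $4$-cycle. Counting vertices gives $v=3m+2\lfloor\tfrac{m-5}{4}\rfloor+22$; since the resulting graph is a planar $(\{3,m\};5)$-graph, the bound $n_p(\{3,m\};5)\le 3m+2\lfloor\tfrac{m-5}{4}\rfloor+22$ follows.

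The delicate part is making the two girth checks precise against the explicit gadget: one must verify that every suppressed degree-two vertex really is interior to a path long enough that every cycle through it has length at least $6$ in $B'_{m+1}$ (so that suppression cannot create a $3$- or $4$-cycle), and, simultaneously, that the suppressions remove exactly the right number of vertices and exactly one degree-two vertex, so that precisely $m$ of them remain to be joined to $w$. Once the local structure of $B'_m$ near those vertices is pinned down this is a finite verification; planarity, the final degree sequence, and the arithmetic are then routine.
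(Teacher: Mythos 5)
Your strategy is exactly the paper's: the paper's entire justification for this proposition is the one-sentence remark you quote (build $B_{m+1}$ for the even parameter $m+1$ and delete degree-two vertices without decreasing the girth), so you have correctly identified the intended route. The problem is that the step you defer as ``a bookkeeping step to be read off Figure~\ref{fig:betterG5}'' is not mere bookkeeping --- with the operation you actually describe it is arithmetically impossible. The graph $B'_{m+1}$ has $3(m+1)+2\lfloor\frac{(m+1)-6}{4}\rfloor+20=3m+2\lfloor\frac{m-5}{4}\rfloor+23$ vertices, of which $m+1$ have degree two. Suppressing a degree-two vertex (deleting it and joining its two neighbours) decreases the order by exactly one \emph{and} the number of degree-two vertices by exactly one, every time. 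Hence after $t$ suppressions you have order $3m+2\lfloor\frac{m-5}{4}\rfloor+23-t$ and $m+1-t$ degree-two vertices: taking $t=2$ reaches your target order $\ldots+21$ but leaves only $m-1$ degree-two vertices, while taking $t=1$ leaves the required $m$ degree-two vertices but gives final order $3m+2\lfloor\frac{m-5}{4}\rfloor+23$ after re-attaching the apex --- one more than the bound to be proved. You cannot ``carry this out the number of times needed to reach order $3m+2\lfloor\frac{m-5}{4}\rfloor+21$ while leaving exactly $m$ vertices of degree $2$.''

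To close the gap you need a surgery on the explicit gadget of Figure~\ref{fig:betterG5} that removes \emph{two} vertices while decreasing the count of degree-two vertices by only \emph{one} (for instance, plainly deleting a degree-two vertex and then suppressing the resulting new degree-two neighbours, or excising a larger piece of one gadget), and you must then verify girth $5$ and cofaciality for that specific operation --- none of which follows from suppression alone. In fairness, the paper's own remark (``delete one vertex'') suffers from the same off-by-one tension, since $B_{m+1}$ has $3m+2\lfloor\frac{m-5}{4}\rfloor+24$ vertices and two must be removed to reach the stated bound; but as a standalone proof your proposal leaves the decisive counting step both unverified and, as formulated, unachievable.
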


\begin{figure}[t]
  \centering
  \includegraphics[scale=0.18]{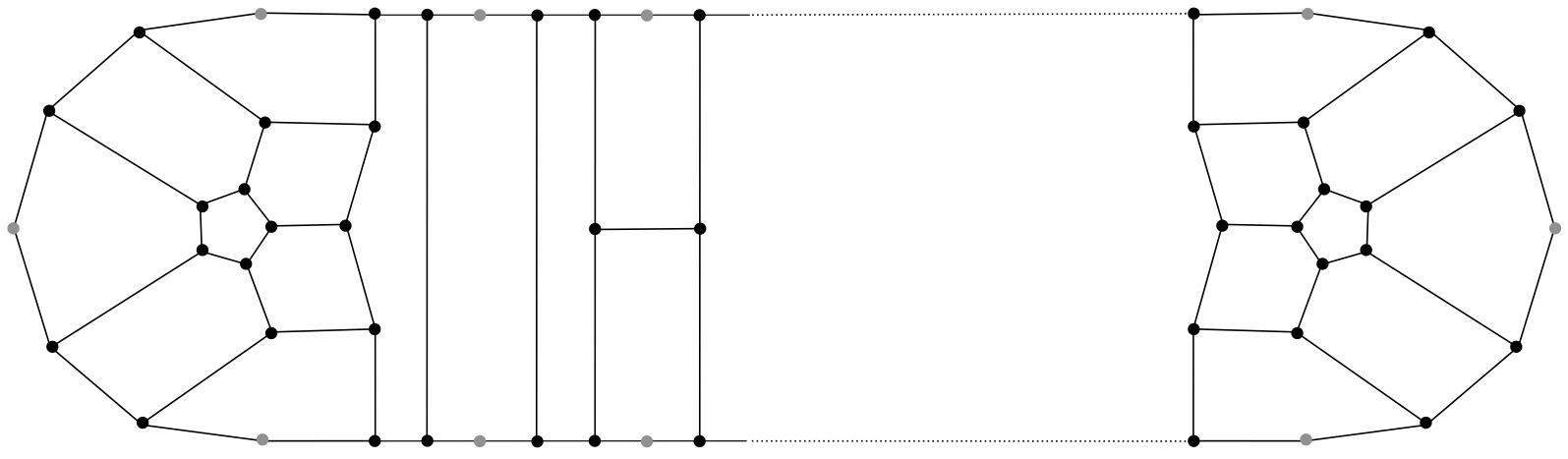}
  \caption{A family of planar \bigraphs[3,m]{5}. In all cases, the
    vertices of degree two in the boundary are adjacent to an external
    vertex.}
  \label{fig:betterG5}
\end{figure}

\subsection{Girth $g\geq 6$}

\begin{theorem} \label{thm:girth6}
The $(\{r,m\};g)$-planar cages with $g \geq 6$ are as follows:

\begin{center}
\begin{tabular}{lllll}
$r$  &$g$ & $n_p(\{r,m\}, 6)$ & Graphs & Full list \\ 
 \hline 
$2$ &  even & $= \frac{m(g-2)+4}{2}$ &  $O_{m,g}$ & yes \\ 
$2$ &  odd & $= \frac{m(g-1)+2}{2}$ &  $O_{m,g}, F_{m,g} \text{ for even } m$ & yes \\ 
\end{tabular} 
\end{center}

\end{theorem}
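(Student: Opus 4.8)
The plan is to obtain this theorem by assembling pieces already in place; the one nontrivial ingredient is the classification of planar $(\{2,m\};g)$-cages for $g\geq 6$, which I would isolate as Lemma~\ref{lem:2mcages}.

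First, the reduction and the lower bound. By Lemma~\ref{lemma:biregularcages}, for $g\geq 6$ the only triplets $(\{r,m\};g)$ admitting a planar $(\{r,m\};g)$-graph have $r=2$ (and $m>2$), so the theorem is entirely a statement about planar $(\{2,m\};g)$-cages. Since any such cage is in particular a $(\{2,m\};g)$-graph, we have $n_p(\{2,m\};g)\geq n(\{2,m\};g)$, and Theorem~\ref{thm:Ch_2_m_cages} evaluates the latter as $\frac{m(g-2)+4}{2}$ for $g$ even and $\frac{m(g-1)+2}{2}$ for $g$ odd; these are precisely the bounds of Corollary~\ref{cor:lowerbounds}(i)--(j).

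Second, the constructions. A $(\{2,m\};g)$-graph is the same thing as a subdivision of an $m$-regular multigraph $H$ (obtained from $G$ by suppressing all degree-$2$ vertices): $G$ is planar exactly when $H$ is, every $H$ with $|V(H)|\leq 2$ is planar (nested parallel edges and loops), and the girth of $G$ equals the minimum, over the cycles of $H$ (a loop counted with length $1$, a pair of parallel edges with length $2$), of the total length of the path or cycle obtained after subdivision. Taking $H$ to be the two-vertex multigraph with $m$ parallel edges and subdividing the arcs to have length $g/2$ each when $g$ is even --- or $m-1$ arcs of length $\frac{g+1}{2}$ and one arc of length $\frac{g-1}{2}$ when $g$ is odd --- produces the graph $O_{m,g}$: one checks it is planar, has girth exactly $g$, and has $\frac{m(g-2)+4}{2}$ (resp. $\frac{m(g-1)+2}{2}$) vertices. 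When $m$ is even, taking instead $H$ to be the single vertex with $m/2$ loops, each subdivided into a $g$-cycle, produces the graph $F_{m,g}$, which is planar, has girth exactly $g$, and $\frac{m(g-1)+2}{2}$ vertices. In every case the lower bound of the previous paragraph is met by a planar graph, so it is the exact value of $n_p(\{2,m\};g)$, and $O_{m,g}$ --- together with $F_{m,g}$ when $m$ is even and $g$ is odd --- is a planar $(\{2,m\};g)$-cage.

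Third, the ``full list'' assertion, which I expect to be the main obstacle. Working with the multigraph $H$ associated to a minimal $G$, I would show that $|V(H)|\leq 2$: a third vertex of degree $m$ forces enough additional subdivision that the order overshoots the bound (this is where a careful estimate is needed, including ruling out the mixed loop/parallel-edge configurations on two vertices, which turn out to be strictly larger). If $|V(H)|=1$ then $H$ is $m/2$ loops, so $m$ is even, and since the girth must be \emph{exactly} $g$ each loop is subdivided into a cycle of length exactly $g$; thus $G=F_{m,g}$. If $|V(H)|=2$ then, after discarding the loop configurations, $H$ is the $m$-arc ``banana'', and the requirement that the girth be exactly $g$ pins down the arc lengths --- all equal to $g/2$ for $g$ even, and, for $g$ odd, exactly one arc of length $\frac{g-1}{2}$ (at most one is allowed by the pairwise girth bound) with the rest of length $\frac{g+1}{2}$ (at least one short arc is needed, else the girth would be $g+1$); thus $G=O_{m,g}$. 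Finally, comparing orders, $\frac{m(g-2)+4}{2}<\frac{m(g-1)+2}{2}$, so for $g$ even only the two-vertex family is minimal and $O_{m,g}$ is the unique planar cage, whereas for $g$ odd both families share the order $\frac{m(g-1)+2}{2}$, so the planar cages are exactly $O_{m,g}$ and, when $m$ is even, $F_{m,g}$. Packaging the $|V(H)|\leq 2$ step and this case analysis cleanly is the content of Lemma~\ref{lem:2mcages}.
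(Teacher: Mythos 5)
Your proposal is correct and follows essentially the same route as the paper: reduce to $r=2$ via Lemma~\ref{lemma:biregularcages}, take the lower bound from Theorem~\ref{thm:Ch_2_m_cages} (equivalently Corollary~\ref{cor:lowerbounds}(i)--(j)), exhibit $O_{m,g}$ and $F_{m,g}$ meeting it, and prove the ``full list'' claim (the paper's Lemma~\ref{lem:2mcages}) by showing there are at most two vertices of degree $m$ --- the paper does this via the Euler-formula bound of Lemma~\ref{lemma:Badbound} for $x\geq 3$ --- and then analysing the one- and two-vertex cases. Your suppressed-multigraph framing is only a repackaging, and in the two-vertex case you are in fact somewhat more careful than the paper, which passes from ``$G\setminus\{u,v\}$ is a union of $m$ disjoint paths'' directly to ``$G$ is $O_{m,g}$'' without explicitly excluding paths returning to the same branch vertex or pinning down the subdivision lengths.
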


By Lemma~\ref{lemma:biregularcages}, we know that the only possible
cases are when $r=2$ and $m\geq 3.$

For any $g$ and $m\geq 3,$ let $O_{m, g}$ be the graph on
$(m-1)\lceil \frac{g-2}{2}\rceil+\lfloor \frac{g-2}{2}\rfloor +2$
vertices that consists of $m$ independent paths between two vertices
$u,v$, $(m-1)$ being of length $\lceil \frac{g}{2}\rceil$ and one of
length $\lfloor \frac{g}{2}\rfloor.$ It is easy to see that this is a
$(\{2,m\}; g)$-graph which is embeddable in the plane.

For any $g$ and $m$ even let $F_{m,g}$ be the graph formed by
$\frac{m}{2}$ cycles of lenght $g$ all incident in one vertex. See
Figure (reference). This graph has $\frac{m}{2}(g-1)+1$ vertices and
is clearly planar.

\begin{lemma}\label{lem:2mcages}
  The only possible $(\{2,m\};g)$-planar cages are $O_{m, g}$ for any
  $2<m$ and $g\geq 6,$ and $F_{m,g}$ for any $2<m$ even and $g\geq 5$
  odd.
\end{lemma}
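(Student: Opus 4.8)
The plan is to show that $O_{m,g}$ and $F_{m,g}$ realize the lower bounds from Corollary~\ref{cor:lowerbounds}(i),(j) and then argue uniqueness by a structural analysis based on the number of vertices of degree $m$. First I would verify the orders: the graph $O_{m,g}$ has $(m-1)\lceil\frac{g-2}{2}\rceil+\lfloor\frac{g-2}{2}\rfloor+2$ vertices, and one checks that for $g$ even this equals $\frac{m(g-2)+4}{2}$ and for $g$ odd it equals $\frac{m(g-1)+2}{2}$; the graph $F_{m,g}$ (for $m$ even) has $\frac{m}{2}(g-1)+1$ vertices, which for $g$ odd again equals $\frac{m(g-1)+2}{2}$. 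So both families meet the Moore-type bound of Corollary~\ref{cor:lowerbounds}, and since they are clearly planar of girth $g$ with exactly the right degree sequence, they are planar $(\{2,m\};g)$-cages. It remains to show no other graph achieves the bound.

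For uniqueness, let $G$ be a planar $(\{2,m\};g)$-cage with $g\geq 6$. By Lemma~\ref{lem:vertexdegree}, since $m\geq 3$ and $v(G)$ is close to $m$ — in fact for $g\geq 6$ we have $v(G)\geq\frac{m(g-2)+4}{2}\geq 2m+2>m+1$, so Lemma~\ref{lem:vertexdegree} does not directly apply; instead I would argue directly that $G$ has at most two vertices of degree $m$, because three pairwise "far apart" high-degree vertices together with their neighborhoods would force too many vertices or a short cycle. The cleaner route: suppress all degree-$2$ vertices (replace each maximal path through degree-$2$ vertices by an edge). This yields a planar multigraph $G'$ whose vertices all have degree $m$ (or we keep track of the at most two such vertices). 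If $G$ has one vertex of degree $m$, suppressing degree-$2$ vertices leaves a single vertex with $m$ loops, so $G$ consists of $m$ internally-disjoint paths from a vertex to itself — but each such "path" is a cycle through the vertex, of length $\geq g$, and counting vertices forces each cycle to have length exactly $g$; this is $F_{m,g}$, and since cycles of length $g$ all sharing one vertex need $\frac{m}{2}(g-1)+1$ vertices, $m$ must be even. If $G$ has two vertices $u,v$ of degree $m$, suppression leaves a multigraph on $\{u,v\}$ with $m$ parallel edges (possibly with loops); loops are excluded by a counting/girth argument, so $G$ is exactly $m$ internally-disjoint $u$–$v$ paths, each of length $\geq\lceil g/2\rceil$ to avoid short cycles, and the extremal count forces $m-1$ of them to have length $\lceil g/2\rceil$ and one length $\lfloor g/2\rfloor$ — this is $O_{m,g}$. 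Finally, the case of zero vertices of degree $m$ is impossible since $G$ is not $2$-regular, and the at-most-two bound must be justified: if there were three vertices of degree $m$ the suppressed multigraph would have minimum degree $m\geq 3$ on $\geq 3$ vertices, and a Moore-type count on such a planar girth-$g$ multigraph exceeds the target order.

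The main obstacle is the at-most-two-vertices-of-degree-$m$ step, since Lemma~\ref{lem:vertexdegree} is only stated for order exactly $m+1$ and does not cover our regime $v(G)\geq 2m+2$. I expect to handle it by the suppression argument above: after suppressing degree-$2$ vertices one gets a planar (multi)graph of girth $\geq 3$ in which the $\geq 3$ surviving vertices have degree $m\geq 3$, and then a direct edge/face count (or invoking that such a configuration forces either a $K_5$/$K_{3,3}$ minor after contracting long paths, or more vertices than allowed) rules it out; a secondary nuisance is excluding loops in the suppressed graph, which I would dispatch by noting a loop corresponds to a cycle of length $\geq g$ attached at a cut vertex, and the resulting vertex total would already exceed $\frac{m(g-2)+4}{2}$ unless the whole graph is $F_{m,g}$. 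The arithmetic identities matching the two extremal orders to the Moore bounds are routine and I would state them without grinding through the floor/ceiling bookkeeping.
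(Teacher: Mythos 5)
Your overall strategy coincides with the paper's: check that $O_{m,g}$ and $F_{m,g}$ meet the bounds of Corollary~\ref{cor:lowerbounds}, show that a planar $(\{2,m\};g)$-cage has at most two vertices of degree $m$, and then identify the one-vertex case with $F_{m,g}$ (forcing $m$ even, via the ``disjoint paths after deleting the hub'' observation) and the two-vertex case with $O_{m,g}$. Your handling of the $x=1$ and $x=2$ cases is essentially the paper's argument dressed in the language of suppressing degree-$2$ vertices, and it is fine apart from small slips (a vertex meeting $m$ loops has $m/2$ loops, not $m$; and in the two-vertex case the girth condition only forces any \emph{two} of the $m$ internally disjoint paths to have lengths summing to at least $g$, so at most one path may be shorter than $\lceil g/2\rceil$ --- which is exactly what produces the single path of length $\lfloor g/2\rfloor$ in $O_{m,g}$ for odd $g$).

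The one step you leave genuinely open --- ruling out three or more vertices of degree $m$ --- does not need the suppression machinery, the minor argument, or a new Moore-type count for multigraphs. You are right that Lemma~\ref{lem:vertexdegree} is inapplicable here, but the paper closes this step with a tool you already cited for the bounds: Equation~\eqref{eqn:bound} from Lemma~\ref{lemma:Badbound}, whose right-hand side is increasing in the number $x$ of degree-$m$ vertices. Setting $r=2$ and $x=3$ gives
\[
v \;\ge\; \frac{3m(g-2)-2g}{4}+3,
\]
and a direct comparison shows this exceeds $\frac{m(g-2)+4}{2}$ by $\frac{(m-2)(g-2)}{4}>0$ and exceeds $\frac{m(g-1)+2}{2}$ by $\frac{(m-2)(g-4)}{4}>0$ for all $m\ge 3$, $g\ge 5$; hence any graph with three or more degree-$m$ vertices is strictly larger than $O_{m,g}$ and $F_{m,g}$ and cannot be a cage. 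With that substitution your argument is complete and matches the paper's; as written, the ``main obstacle'' you identify is sketched but not actually resolved, since none of the three candidate arguments you list (face count on the suppressed multigraph, $K_5/K_{3,3}$ minors, loop exclusion) is carried out.
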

\begin{proof}
  It is not difficult to check that the numbers of vertices of
  $O_{m,g}$ and $F_{m,g}$ reach the lower bounds in
  Corollary~\ref{cor:lowerbounds}, for the cases mentioned.

  Now, for the characterization part, we will first argue that the
  number of vertices $x$ of degree $m$ is at most two. We will proceed
  by contradiction. By Corollary~\ref{cor:lowerbounds} if $x\geq 3$
  then the number of vertices necessary to have a planar
  $(\{2,m\};g)$-graph, $v$ is such that:
  $$v \geq \frac{3(g-2)m-2g}{4}+3.$$
  It is easy to check that this number is always higher than the
  number of vertices of $O_{m,g}$ and $F_{m,g}.$ Thus, a
  $(\{2,m\};g)$-planar cage has at most $2$ vertices of degree $m.$

  Let $x=1,$ $G$ be a planar cage and $v$ be the vertex in $G$ of
  degree $m$. Then $G \setminus \{v\}$ is a graph with $m$ vertices of
  degree $1$ and $V(G)-m-1$ vertices of degree two. Hence,
  $G \setminus \{v\}$ has to be exactly the union of $\frac{m}{2}$
  disjoint paths. This clearly implies that $G$ is $F_{m,g}.$

  Let $x=2,$ $G$ be a planar cage and $v, u$ be the vertices in $G$ of
  degree $m$. Then $G \setminus \{v, u\}$ is a graph with $2m$
  vertices of degree $1$ and $V(G)-2m-1$ vertices of degree
  two. Hence, $G \setminus \{v, u\}$ has to be exactly the union of
  $m$ disjoint paths. This clearly implies that $G$ is $O_{m,g}.$
\end{proof}

\section{Proofs of Lemmas in Section~\ref{sec:planargraphs}}
\label{sec:technical_lemmas}
\subsection{Proof of Lemma~\ref{lem:vertexdegree}}

\begin{lemma*}
  A planar graph with at least one vertex of degree $m\geq 3$ that has
  exactly $m+1$ vertices satisfies one of the following:
\begin{enumerate}[a.]
\item It has four vertices of degree $m$ and $m=3$
\item It has three vertices of degree $m$ and $m=4$
\item It has at most two vertices of degree $m$
\end{enumerate}
\end{lemma*}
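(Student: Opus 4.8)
The plan is to lean on one elementary observation: in a graph on exactly $m+1$ vertices, a vertex of degree $m$ is \emph{universal} — it is adjacent to every other vertex. Let $S$ be the set of vertices of degree $m$ and put $t=|S|$ (so $t\geq 1$ by hypothesis). Since each vertex of $S$ is universal, $S$ induces a clique $K_t$, and moreover every vertex of $S$ is joined to each of the $m+1-t$ vertices lying outside $S$. Hence $G$ contains, as a subgraph, the complete join of $K_t$ with an independent set on the remaining $m+1-t$ vertices; only this forced structure will be used.

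First I would dispose of the easy case: if $t\leq 2$ we are in conclusion (c), so assume $t\geq 3$. Two planarity obstructions then cut the possibilities down sharply. On one hand, $t\geq 5$ would force $K_5\subseteq G$, so $t\in\{3,4\}$. On the other hand, if $m+1-t\geq 3$, then choosing three vertices of $S$ and three vertices outside $S$ exhibits a $K_{3,3}$ (all nine cross edges are present because the $S$-vertices are universal), which is impossible; hence $m+1-t\leq 2$, i.e. $t\geq m-1$. Recalling $m\geq 3$, this leaves exactly two possibilities: $t=3$ with $m\in\{3,4\}$, or $t=4$ with $m\in\{3,4,5\}$.

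It remains to eliminate the unwanted corners. If $m=3,\,t=3$ or $m=4,\,t=4$, there is a single vertex outside $S$; being adjacent to all of $S$ it has degree $|S|=m$, so it belongs to $S$, contradicting $|S|=t$. If $t=4$ and $m=5$, then $G$ contains $K_4$ joined to two further vertices $u,v$; splitting the four $K_4$-vertices into a triple $A$ and a singleton $\{w\}$, the pair $(A,\{w,u,v\})$ is a $K_{3,3}$ subgraph, contradicting planarity. The surviving cases are $t=3,\,m=4$, which is conclusion (b), and $t=4,\,m=3$, which is conclusion (a) (the graph being $K_4$). I expect the step requiring the most care to be the last one, $t=4,\,m=5$: one must choose the $K_{3,3}$ bipartition correctly — breaking the $K_4$ apart rather than keeping it on one side — since the naive split does not produce two parts of size three. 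Everything else is routine bookkeeping around the universal-vertex observation.
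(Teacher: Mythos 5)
Your proof is correct and follows essentially the same route as the paper's: vertices of degree $m$ in a graph on $m+1$ vertices are universal, hence form a clique joined to all remaining vertices, and the $K_5$ and $K_{3,3}$ obstructions then pin down the admissible values of $t=|S|$ and $m$. Your bookkeeping via $t\in\{3,4\}$ and $t\geq m-1$ is slightly more systematic, and $K_{3,3}$ is indeed the right obstruction in the three-universal-vertex case (the paper's proof misprints it as $K_{6,6}$).
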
 
\begin{proof}
Assume $G$ is such a graph.

\begin{enumerate}[a.]
\item Assume that $G$ has four vertices, say $\{v_1, v_2, v_3, v_4\}$
  of degree $m.$ The graph induced by this four vertices is $K_4.$ If
  $m=3,$ $G$ is the tetrahedron. If $m\geq4$ then every vertex in
  $v \in V(G-\{v_1, v_2, v_3, v_4\})$ is connected to all of
  $\{v_1, v_2, v_3, v_4\}.$ Hence $\{v, v_1, v_2, v_3, v_4\},$ induces
  a $K_5,$ making $G$ non planar. Thus this is only possible when
  $m=3.$

\item Assume that $G$ has exactly three vertices of degree $m,$ say
  $\{v_1, v_2, v_3\}.$ Then the graph induced by these three vertices
  is $K_3.$ Also, every vertex in $v \in V(G-\{v_1, v_2, v_3\})$ is
  connected to all of $\{v_1, v_2, v_3\}.$ If
  $|V(G-\{v_1, v_2, v_3\})|=1,$ then $m=3$ and we would have the
  previous case, where all of the four vertices have degree $m$. Hence
  $m\geq 4.$ We will argue that $m=4.$

  First assume that $m\geq 5$ then $|V(G-\{v_1, v_2, v_3\})|\geq 3,$
  then for all triplets $\{u_1, u_2, u_3\} \in V(G-\{v_1, v_2, v_3\})$
  we have that the graph induced by
  $\{u_1, u_2, u_3\} \cup \{v_1, v_2, v_3\}$ contains $K_{6,6},$ hence
  $G$ would not be planar.

  Finally, if $m=4$ then $|V(G-\{v_1, v_2, v_3\})|= 2,$ and $G$ would
  be the $1$-skeleton of a double pyramid with triangular
  base. Clearly, a planar graph.

\item Clearly, the graph must have at most two vertices of degree $m$.
\end{enumerate}
\end{proof}

\subsection{Proof of Lemma~\ref{lem:chorizo1}}

\begin{lemma*}
  Let $G$ be a $(\{r,m\};g)$-planar graph, then the subgraph of $G$
  induced by all the vertices in the faces incident to a vertex $x,$
  $link_G(x),$ is an outer planar graph consisting of a (not
  necessarily disjoint) union of cycles (with or without chords) and
  paths, with at least $deg(x)$ vertices.
\end{lemma*}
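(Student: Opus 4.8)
The plan is to analyze the local structure around $x$ face-by-face in a fixed planar embedding of $G$. First I would fix a planar embedding of $G$ and consider the faces $f_1, \ldots, f_{\deg(x)}$ incident to $x$ (one for each corner at $x$, though some faces may repeat if $G$ is not $2$-connected; the simple-graph hypothesis prevents repeats among the \emph{edges} at $x$). Each face $f_i$ incident to $x$ has a facial walk; deleting $x$ from this walk leaves a walk $W_i$ in $G - x$ between two neighbours of $x$. The subgraph $link_G(x)$ is, by definition, the union over $i$ of the vertex sets of the $f_i$ together with the induced edges; equivalently it is built from the walks $W_i$. Since $x$ is drawn in the plane and every $f_i$ touches $x$, each $f_i$ lies (as a region) with $x$ on its boundary, so all vertices of $link_G(x)$ are visible from $x$ — contracting $x$ to a point shows $link_G(x)$ is drawn with all its vertices on the boundary of a single face, i.e. it is outerplanar.

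Next I would establish the cycles-and-paths decomposition. Walking around $x$ in the embedding, consecutive faces $f_i, f_{i+1}$ share an edge $xy_i$, so the walks $W_i$ and $W_{i+1}$ share the endpoint $y_i$; thus $W_1 \cup W_2 \cup \cdots$ forms a connected closed walk $C$ in $G-x$ (closing up because we return to the start after going around $x$). Each maximal subwalk that is a genuine cycle contributes a $C_i$ (possibly with chords coming from other edges of $G$ among these vertices), and the remaining pieces — the parts of the walks that are traversed "there and back", i.e. trees hanging off — contribute the $P_j$'s. More carefully: a face incident to $x$ that is a cycle through $x$ gives a cycle in $link_G(x)$ through the neighbours involved; a bridge-edge of $G-x$ that is traversed twice by some facial walk gives a pendant path. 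I would make this precise by arguing that $link_G(x)$, being a connected outerplanar graph, decomposes into its blocks, each block being either a single edge (giving path pieces) or a $2$-connected outerplanar graph, which is exactly a cycle with (non-crossing) chords.

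Finally, the count $v(link_G(x)) \geq \deg(x)$: the $\deg(x)$ neighbours of $x$ all lie in $link_G(x)$ (each is on some incident face), and they are pairwise distinct because $G$ is simple, so $link_G(x)$ has at least $\deg(x)$ vertices. The main obstacle I anticipate is the case where $G$ is not $2$-connected near $x$: then a single face may wind around $x$ in a complicated way, a neighbour $y_i$ of $x$ may be a cut vertex appearing several times on facial walks, and the "union of cycles and paths" description needs the bridge/block structure of $G - x$ restricted to $link_G(x)$ to be handled carefully — in particular showing that the only way a vertex of $link_G(x)$ can fail to lie on a cycle is if it sits on a pendant tree, and that these pendant trees are exactly the $P_j$. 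I would handle this by reducing to blocks of $link_G(x)$ and invoking the standard structure theorem for outerplanar graphs (blocks are edges or chorded cycles), rather than reasoning directly about the possibly non-$2$-connected embedding of $G$.
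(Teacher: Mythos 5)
Your proof is correct, but it reaches the conclusion by a different route than the paper. The paper's own proof is very short: it lists the vertices of $link_G(x)$ in the cyclic order in which they appear around $x$ in the embedding (giving at least $\deg(x)$ entries, possibly with repetitions), and then observes that two repetitions $x_i=x_j$ and $x_{i'}=x_{j'}$ can never interleave as $i<i'<j<j'$ without violating planarity; the laminar (non-crossing) pattern of identifications is then taken to yield the cycle-and-path structure directly. You instead derive outerplanarity by merging the faces around $x$ (all link vertices become incident to a single face of $G-x$, and passing to the induced subgraph only enlarges faces), and then obtain the decomposition from the block structure of a connected outerplanar graph: bridges give the tree pieces and $2$-connected blocks are Hamiltonian cycles with non-crossing chords. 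Your version makes explicit a step the paper leaves implicit --- namely, how one gets from ``no crossing identifications'' to ``union of chorded cycles and paths'' --- at the cost of invoking the structure theorem for $2$-connected outerplanar graphs; the paper's argument is more elementary but ends abruptly with ``this proves the lemma.'' Two small points to keep in mind: the non-cycle pieces are in general \emph{trees} rather than paths (a cut vertex of $G-x$ appearing several times around $x$ can produce branching), which is consistent with the paper's own subsequent redefinition of the $P_j$ as maximal induced trees; and your block argument is stated for connected $link_G(x)$, so you should add the one-line reduction to components, just as the paper assumes $G\setminus x$ connected without loss of generality.
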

\begin{proof}
  We may assume without loss of generality that $G \setminus x$ has
  only one connected component.

  Let $X=\{x_1, \ldots x_l\}$ be an ordered set that labels the
  vertices in $link_G(x)$, in the order they appear around $x$ in the
  embedding. Here, by assumption, we need to have $deg(x) \leq l $
  where there may be some repetitions of vertices.

  Assume that there are indeed some repetitions in $X,$ say
  $x_i=x_{j}$ and $x_{i'}=x_{j'}$ then we can never have
  $i < i' <j < j'$, where all subíndices are taken $\mod l$, otherwise
  we would violate the planarity of $G$. This proves the lemma.
\end{proof}

\subsection{Proof of Lemma~\ref{lem:chorizo2}}

\begin{lemma*}
  The intersection graph, $I_x,$ with $deg(x)=m$ is a simple graph,
  furthermore it is a forest. 
\end{lemma*}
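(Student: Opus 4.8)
The plan is to establish the two assertions — that $I_x$ is simple, and that $I_x$ is a forest — separately, both by exploiting the embedding of $G$ near $x$ together with the girth hypothesis and the fact that $x$ has the larger degree $m$. Throughout I would work with the cyclic sequence $X = (x_1,\dots,x_l)$ of vertices of $link_G(x)$ read around $x$ in the planar embedding, as set up in the proof of Lemma~\ref{lem:chorizo1}, and with the non-crossing condition established there: if $x_i = x_j$ and $x_{i'} = x_{j'}$ are two repeated vertices, then we never have $i < i' < j < j'$ cyclically. Recall that the edges of $link_G(x)$ are exactly the edges of $G$ joining consecutive vertices of $X$ (the ``outer'' boundary of the faces at $x$), possibly with chords; the maximal induced cycles $C_i$ and maximal induced paths $P_j$ partition these.

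First, simplicity. A loop in $I_x$ at $c_i$ would mean the cycle $C_i$ ``intersects itself'' in a vertex, which is not meaningful; a genuine concern is a double edge, i.e. two components $C_i, C_j$ (or $C_i, P_j$) sharing two distinct vertices $a$ and $b$. Here I would use the girth: if two of these components met in two vertices $a,b$, then since each component is an induced subgraph of $link_G(x)$ and $a,b$ are both adjacent to $x$ (being in $X$ and serving as intersection points — one needs the small observation that an intersection vertex of two distinct maximal pieces must actually be a neighbour of $x$, not merely a repeated non-neighbour vertex, otherwise it would lie in the interior of a path and could be absorbed), we would get two internally disjoint $a$–$b$ paths inside $link_G(x)$ plus the two edges $xa$, $xb$, producing a short closed walk. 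Chasing the lengths carefully and using $g \geq 3$ — actually the real point is the non-crossing condition: two pieces sharing two vertices would force a nesting pattern $i<i'<j<j'$ forbidden by Lemma~\ref{lem:chorizo1}, applied to the four occurrences of $a$ and $b$ around $x$. This gives simplicity.

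Second, the forest property. Suppose $I_x$ contains a cycle, say on vertices $z_1 z_2 \cdots z_t z_1$ where each $z_s$ is some $c_{i}$ or $p_{j}$; consecutive $z_s, z_{s+1}$ share a vertex $a_s \in X$ that is a neighbour of $x$. These sharing-vertices $a_1,\dots,a_t$ are pairwise distinct (if $a_s = a_{s'}$ with $s \neq s'$, three distinct pieces would meet at one vertex, making that vertex have degree $\geq$ something too large — here is where $deg(x) = m$ versus the degree bound on $link$-vertices comes in, exactly as in Claim 2 of the proof of Lemma~\ref{lem:linklowbound}). Now walking around the $I_x$-cycle and concatenating, inside each piece $z_s$, a path from $a_{s-1}$ to $a_s$ gives a closed walk in $link_G(x)$ visiting the distinct vertices $a_1,\dots,a_t$ in this cyclic order; but each $a_s$ is also joined to the central vertex $x$. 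Projecting the cyclic order of the $a_s$'s onto their positions in $X$, the closed walk together with the spokes $x a_s$ realises a $K_{2,t}$-like or wheel-like configuration that cannot be drawn without a crossing — concretely, pick $a_1, a_2, a_3$; their three occurrences around $x$ split the cyclic sequence $X$ into arcs, and the piece connecting $a_3$ back to $a_1$ while avoiding $a_2$ must cross a spoke. This is the contradiction with planarity.

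The main obstacle I anticipate is the bookkeeping in the forest step: making precise the claim that a cycle in $I_x$ forces a forbidden crossing, i.e. correctly translating ``$t$ pieces arranged cyclically, each pair of consecutive ones sharing a neighbour of $x$'' into a violation of the non-crossing condition of Lemma~\ref{lem:chorizo1}. One has to be careful that the shared vertices are genuinely neighbours of $x$ (not interior path vertices) and genuinely distinct, and that a path through a piece $z_s$ from one shared vertex to the next stays within the region of the embedding bounded by the relevant arc of $X$; the degree constraint $\deg(x) = m > r$ and the degree-$\leq 3$-or-$2$ behaviour of $link$-vertices (used already in Lemma~\ref{lem:linklowbound}) is what rules out three pieces meeting at a point and keeps the argument clean. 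Once that combinatorial-topological picture is pinned down, both conclusions follow from the single non-crossing principle already proved.
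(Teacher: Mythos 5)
Your simplicity step is workable in outline, but the forest step has a genuine gap: the contradiction you aim for does not exist in the form you describe. A cycle through $a_1,\dots,a_t$ together with spokes $xa_1,\dots,xa_t$ is a wheel-like configuration, and wheels are planar, so ``the piece connecting $a_3$ back to $a_1$ while avoiding $a_2$ must cross a spoke'' is not forced by planarity alone; you correctly flag this as the weak point, but it is not a bookkeeping issue --- the obstruction genuinely is not a crossing. The actual argument runs differently: a cycle in $I_x$ yields a cycle $D$ of $G$ inside the union of the corresponding pieces; $D$ separates the plane; every vertex of $link_G(x)$ must lie on the side of $D$ visible from $x$ (the outerplanarity from Lemma~\ref{lem:chorizo1}); hence the boundary of the face of $link_G(x)$ containing $x$ is a single cycle passing through all the vertices of the constituent $C_i$. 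That contradicts the \emph{maximality} of the cycles $C_i$ in the decomposition --- they should have been recognised as one larger cycle with chords. Your sketch never invokes maximality in the forest step, and without it the argument cannot close.

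Two further problems. Your parenthetical claim that an intersection vertex of two maximal pieces must be a neighbour of $x$ is unjustified (a vertex lying on two faces at $x$ need not be adjacent to $x$), and your distinctness claim for $a_1,\dots,a_t$ leans on the degree bound for link vertices that is specific to the $(\{3,m\};4)$ setting of Lemma~\ref{lem:linklowbound}, whereas the present lemma is stated for general $(\{r,m\};g)$-planar graphs; neither claim is needed once the argument is run through maximality as above. In the simplicity step, you should also treat separately the case where the two common vertices $u,v$ of $C_i$ and $C_j$ are adjacent in both cycles (then $C_i\cup C_j$ is a single cycle with a chord, again contradicting maximality); in the remaining case the union contains three internally disjoint $u$--$v$ paths of length at least two, a theta graph that is not outerplanar --- this is cleaner and actually established, unlike the interleaving pattern of the four occurrences of $u$ and $v$ around $x$, which you assert but do not prove.
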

\begin{proof}
  We may assume without loss of generality that $link_G(x)$ is
  connected, as the result easily generalizes from the connected case
  to the disconnected case. Hence, we need to prove that $I_x$ is a
  simple graph which is a tree.

\begin{enumerate}[a.] 
\item We will begin by arguing that there are no multiple edges.

  Suppose $c_i c_j$ is a multiple edge, then $C_i, C_j$ intersect in
  more than one vertex, say they are $u$ and $v.$

\begin{itemize}
\item If this vertices are adjacent in both cycles, then we could have
  considered the union of $C_i, C_j$ as a single cycle with a chord.
\item Hence, we may assume that $u$ and $v$ are non adjacent in at
  least one of the two cycles. This is, there are at least three paths
  of length at least two between $u$ and $v.$ These paths divide the
  plane in at least three regions, thus the union of $C_i, C_j$
  wouldn't be outerplanar, contradicting the outerplanarity of
  $link_G(x).$
\end{itemize} 

The case where we suppose $c_i p_j$ is a multiple edge is proved
similarly.

\item We will now argue that there are no cycles in $I_x.$ 

  Suppose, to the contrary, that there is a cycle $\mathcal{C}$ in
  $I_x,$ then there is a cycle of $G$ contained in the union of the
  cycles and paths corresponding to each of the vertices in
  $\mathcal{C}$. This cycle divides the plane in two connected
  components, say $\mathcal{C}^+$ and $\mathcal{C}^-.$ We may assume
  without loss of generality that $x \in \mathcal{C}^+,$ by
  construction there can be no vertices of $link_G(x)$ in
  $\mathcal{C}^-$ and all vertices of $link_G(x)$ have to be visible
  from $x$ in $\mathcal{C}^+.$ Hence there is a cycle induced by the
  cycles and paths corresponding to the vertices of $\mathcal{C}$
  which necessarily contains all the vertices of the cycles
  corresponding to vertices in $\mathcal{C}$, otherwise outerplanarity
  would be violated, but this contradicts the maximality of the cycles
  represented in $I_x.$
\end{enumerate}
\end{proof}

\subsection{Proof of Lemma~\ref{lemma:outerplanar}}
\begin{lemma*}
  If $G$ is an outerplanar graph of order $\geq 4,$ such that all of
  its vertices have degree at least $2$ then it has at least two
  non-consecutive vertices of degree exactly $2.$
\end{lemma*}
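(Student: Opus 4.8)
The plan is to prove the statement about outerplanar graphs by a standard structural/inductive argument. Let $G$ be an outerplanar graph of order $n \geq 4$ with minimum degree at least $2$. Fix an outerplanar embedding of $G$, and consider its \emph{weak dual} (the dual graph restricted to the bounded faces). A well-known fact is that the weak dual of an outerplanar graph is a forest. First I would reduce to the case where $G$ is $2$-connected: if $G$ is not $2$-connected, one can either pass to a leaf block of the block-cut tree (which is $2$-connected, or an edge), or argue directly that a cut vertex separates $G$ into pieces each of which — after a small case analysis on orders — must contribute a degree-$2$ vertex, and that two such vertices coming from different ``ends'' of the block-cut tree are non-adjacent. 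So the heart of the matter is the $2$-connected case.

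In the $2$-connected case, the outer boundary is a Hamiltonian cycle $C = v_1 v_2 \cdots v_n v_1$, and all remaining edges are non-crossing chords drawn inside $C$. The bounded faces are the regions this chord set cuts the disk into, and the weak dual $T$ is a tree whose nodes are these inner faces; a node is a leaf of $T$ iff the corresponding face is bounded by exactly one chord and a contiguous arc of $C$ ($T$ has at least two leaves since $n \geq 4$ forces at least one chord, hence at least two faces — or if there are no chords at all, $G = C_n$ and every vertex has degree exactly $2$, so any two non-adjacent vertices of $C_n$ work, which exist because $n \geq 4$). For a leaf face bounded by a chord $v_i v_j$ together with the arc $v_i v_{i+1} \cdots v_j$, every interior vertex $v_{i+1}, \dots, v_{j-1}$ of that arc has degree exactly $2$ in $G$ (its only possible neighbours would be consecutive boundary vertices, any chord from it would subdivide this face). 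Since $T$ has at least two leaves, I get at least two such ``ear'' faces; I would pick one degree-$2$ vertex from each. The only thing left is to guarantee the two chosen vertices are non-adjacent: if the two leaf faces are not the same, their chord-arcs are either disjoint or share at most one endpoint of $C$, and one can choose the degree-$2$ vertices far enough inside each arc to avoid adjacency — the one edge case is when each ear arc has length $2$ (a single interior vertex) and the two chords share a boundary vertex, but then those two interior vertices are at distance $2$ on $C$ and non-adjacent since a chord between them would contradict $v_i v_j$ being an edge (triangle / the shared vertex already blocks it); more carefully, if $G$ had an edge between them it would cross one of the two chords, violating the embedding.

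The main obstacle I anticipate is the bookkeeping in the last step: ensuring non-adjacency of the two selected degree-$2$ vertices in all configurations, especially the degenerate ones where ear faces are triangles and the tree $T$ is small (e.g.\ a path on two nodes, i.e.\ $G$ has exactly one chord). In that sub-case $G$ is a cycle plus one chord $v_i v_j$; the two arcs both consist of degree-$2$ vertices, and since $n \geq 4$ at least one arc has $\geq 2$ vertices, giving two non-adjacent degree-$2$ vertices on that arc, or one from each arc — and any two boundary vertices that are not the chord-endpoints and not cycle-consecutive are non-adjacent because the only non-cycle edge is the chord $v_i v_j$. So that case is actually easy; the genuine care is needed only when $T$ has many leaves but several of them are triangles glued around a common vertex. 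I would handle this by the following clean statement: among all leaves of $T$, choose two leaf faces whose defining chords are \emph{not} incident to a common vertex if possible; if every pair of leaf chords shares a vertex then all leaves ``fan out'' from one vertex $w$, and a short argument (looking at the two extreme fans) still produces two non-adjacent degree-$2$ vertices. This reduces everything to a finite check and completes the proof.

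As an alternative, if the embedding bookkeeping proves annoying, I would instead run an induction on $n$: for $n = 4$ check the (few) outerplanar graphs with $\delta \geq 2$ directly; for $n > 4$, take a degree-$2$ vertex $v$ (which exists since outerplanar graphs have a vertex of degree $\leq 2$, and $\delta \geq 2$ forces it to be exactly $2$), with neighbours $a,b$. If $a b \in E(G)$, contract/delete to get a smaller outerplanar graph $G' = G - v$ still with $\delta \geq 2$ — unless $a$ or $b$ dropped to degree $1$, in which case a local fix handles it — apply induction, and then check $v$ is non-adjacent to at least one of the two vertices returned (it is, since $v$ has only two neighbours $a,b$). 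If $a b \notin E(G)$, add the edge $ab$; $G' = (G-v) + ab$ is outerplanar with $\delta \geq 2$ and strictly smaller, apply induction to get two non-adjacent degree-$2$ vertices, and argue they remain degree-$2$ and non-adjacent in $G$, or replace one by $v$. Either way the obstruction is purely the degenerate small cases and the adjacency bookkeeping, not any deep phenomenon.
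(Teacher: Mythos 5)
Your main argument is correct, and it takes a genuinely different route from the paper. The paper proves the lemma by induction on the order: it takes the outer cycle $(v_1,\dots,v_n)$, picks a chord $v_iv_j$ splitting $G$ into two outerplanar pieces $G_1,G_2$ that share the edge $v_iv_j$, and extracts one degree-$2$ vertex from each side, with a direct check when a side has order $3$. Your weak-dual argument replaces the induction by a single structural observation: every leaf of the weak dual is an ``ear'' face bounded by one chord and an arc of the outer cycle, all interior vertices of that arc have degree exactly $2$, and the tree has at least two leaves. This is arguably cleaner, and the adjacency bookkeeping you spend most of your space worrying about actually evaporates: an interior vertex of a leaf ear has degree exactly $2$, so it carries no chord and its only neighbours are its two cycle-neighbours, which lie in the closed arc of its own ear; since the arcs of two distinct leaf ears are internally disjoint and their endpoints are chord ends of degree at least $3$, no interior vertex of one ear can be a neighbour of an interior vertex of another. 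Hence any choice of one interior vertex per leaf ear works, with no case analysis on triangular ears or shared chord endpoints. The one step that does need a little more care than you give it is the reduction to the $2$-connected case: a single leaf block that is a triangle hanging off a cut vertex $c$ contributes two degree-$2$ vertices that are adjacent to each other, so one leaf block may yield only one usable vertex; the fix is that a connected, non-$2$-connected graph has at least two leaf blocks (and none of them is a bridge, by the minimum-degree hypothesis), and one degree-$2$ non-cut vertex from each of two distinct leaf blocks is automatically a non-adjacent pair. Your fallback induction (delete a degree-$2$ vertex $v$, possibly add the edge $ab$) is the riskier of your two routes --- the ``local fix'' when a neighbour drops to degree $1$ is precisely where such arguments tend to break --- but it is not needed. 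Both your argument and the paper's quietly use the standard fact that the outer face of a $2$-connected outerplanar graph is a Hamiltonian cycle; that is fine, though the paper, like you, leaves the connected-but-not-$2$-connected case mostly implicit.
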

\begin{proof}
  We will denote the number of vertices of $G$ as $n.$ We will proceed
  by induction on $n$. Note that the graph may have different
  connected components and that the result holds trivially for cycles.

\begin{enumerate}[a.]
\item \emph{$n=4$}. If $G$ is a cycle the result follows. Assume that
  $G$ is not a cycle and let $(v_1, v_2, v_3, v_4)$ be the cycle in
  $G$ that bounds the outerface of the drawing. Then, the only
  additional edge of $G$ not in $(v_1, v_2, v_3, v_4)$ is either
  $v_1v_3$ or $v_2v_4.$ In the first instance the two non consecutive
  vertices of degree two are $v_2,v_4$, and in the second instance
  they are $v_1,v_3.$
\item \emph{$n\leq k$.} Assume that any outer planar graph of order at
  most $k$ such that all of its vertices have degree at least $2$,
  contains at least two non consecutive vertices of degree $2.$
\item \emph{$n=k+1.$} If $G$ is a cycle the result follows trivially. Also, if $G$ has at least two connected components the result follows.

  Thus, we may assume that $G$ has a unique connected component. Let
  $(v_1, v_2, \ldots, v_n)$ be the cycle that bounds the outerface of
  the drawing. As $G$ is not a cycle, there is an edge $v_iv_j$ that
  splits the graph in to two smaller outerplanar subgraphs $G_1$ and
  $G_2,$ which both contain a copy of the edge $v_iv_j.$

  If both $G_1$ and $G_2$ are of order at least $4,$ then each graph
  contains a vertex of degree $2,$ different from $v_i$ and $v_j, $
  and the result follows.

  Thus we only have to prove it for when either $G_1$ or $G_2$ are of
  order $3.$ We can assume without loss of generality that $G_1$ is of
  order $3.$ Here the vertex in $G_1$ different from $v_i$ and $v_j, $
  has degree two. As for $G_2,$ if its order is $\geq 4$ then the
  result follows, by the induction hypothesis. Otherwise, the vertex
  in $G_2$ different from $v_i$ and $v_j, $ has degree two.
\end{enumerate}
\end{proof} 

\section{Conclusions}

For the $(\{3,m\};4)$-- graphs where we have not reached the lower bound, we believe it to be unlikely that other constructions can improve the bounds provided. For all the other cases where there is still room for improvement it would be nice to see such improvements, either in the form of improved lower bounds or constructions.

Finally we consider that studying the \emph{biregular planar cage problem} for other surfaces, oriented or non-oriented, will lead to nice discoveries.

\emph{Acknowledgements.} The authors would like to thank D. Leemans for his help performing the Magma computations in Subsection \ref{subsec:5_3_cages}.

The research was partially supported by projects PAPIIT-M\'exico  IN107218 and IN106318, CONACyT-M\'exico 282280 and UNAM-CIC `Construcci\'on de Jaulas Mixtas'.


\end{document}